
\documentclass[10pt]{amsart}

\usepackage[OT1]{fontenc}
\usepackage{amsmath}
\usepackage{amssymb}
\usepackage{amsthm}

\usepackage{xcolor}
\usepackage[hang,tight,raggedright]{subfigure}
\usepackage[small,it]{caption}
\usepackage{tabularx,booktabs}
\usepackage[mathscr]{eucal}
\usepackage{units}
\usepackage{url}
\usepackage{aliascnt}
\usepackage{calc}
\usepackage{cite}
\usepackage[inline]{enumitem}
\setlist{
topsep=.3ex,
itemsep=.05ex,
parsep=.1ex,
partopsep=.1ex,
rightmargin=0pt
} 
\setlist[itemize]{leftmargin=4ex}

\setlist[enumerate]{labelsep=*, leftmargin=1.5pc}
\setlist[enumerate,1]{label=(\arabic*), ref=(\arabic*)}
\setlist[enumerate,2]{label=(\emph{\alph*}),
                      ref=(\theenumi)\emph{\alph*}}
\setlist[enumerate,3]{label=\roman*), ref=\theenumii.\roman*}

\usepackage[totalwidth=.65\paperwidth,
                totalheight=.74\paperheight,
                a4paper,
                headsep=.8cm,
                hmarginratio=1:2,
                vmarginratio=2:2,
                marginparsep=.03\paperwidth,
                marginparwidth=.2\paperwidth,
                bindingoffset=15mm]          {geometry}
\usepackage[colorlinks,linkcolor=darkblue,
            citecolor=darkblue,
            filecolor=darkblue,urlcolor=darkgreen,
            anchorcolor=darkblue,menucolor=darkblue,
            bookmarksopen=false,bookmarks=false,pdfpagelabels,plainpages=false
            ]{hyperref}

\definecolor{darkblue}{rgb}{.8,.15,.15}
\definecolor{darkgreen}{rgb}{0.15,.4,.5}

\parskip1pt
\parindent0pt

\usepackage{tikz}
\usetikzlibrary{calc}
\usetikzlibrary{decorations.pathreplacing}
\usetikzlibrary{arrows,shapes}

\tikzstyle{perspective adjusted}=[%
x={(.8,.3)},%
y={(.2,1.5)},%
z={(2.4,-.5)}]

\newcommand{\theoremname}{Theorem}
\newcommand{\corollaryname}{Corollary}
\newcommand{\lemmaname}{Lemma}
\newcommand{\propositionname}{Proposition}
\newcommand{\remarkname}{Remark}
\newcommand{\examplename}{Example}
\newcommand{\definitionname}{Definition}

\newcommand{\figuresref}[1]{\hyperref[#1]{Figures~\ref*{#1}}}
\newcommand{\propositionsref}[1]{\hyperref[#1]{Propositions~\ref*{#1}}}

\theoremstyle{plain}
\newtheorem{theorem}{\theoremname}[section]
\newtheorem*{theorem*}{\theoremname}

\newaliascnt{corollary}{theorem}
\newtheorem{corollary}[corollary]{\corollaryname}

\newaliascnt{lemma}{theorem}
\newtheorem{lemma}[lemma]{\lemmaname}

\newaliascnt{proposition}{theorem}
\newtheorem{proposition}[proposition]{\propositionname}

\theoremstyle{definition}
\newaliascnt{remark}{theorem}

\newaliascnt{example}{theorem}

\newaliascnt{definition}{theorem}
\newtheorem{definition}[definition]{\definitionname}

\aliascntresetthe{corollary}
\aliascntresetthe{lemma}
\aliascntresetthe{proposition}
\aliascntresetthe{remark}
\aliascntresetthe{definition}

\numberwithin{figure}{section}
\numberwithin{table}{section}

\DeclareMathOperator{\bdim}{bd}

\DeclareMathOperator{\cayley}{Cay}
\DeclareMathOperator{\neigh}{\mathscr N}
\DeclareMathOperator{\cneigh}{\mathscr{CN}}
\DeclareMathOperator{\partners}{\mathcal P}
\DeclareMathOperator{\face}{\mathsf F}

\DeclareMathOperator{\lattice}{\mathcal L}
\DeclareMathOperator{\joinedprod}{{\mathcal{JP}}}
\DeclareMathOperator{\circular}{{\mathcal{C}}}
\newcommand{\vertices}{\mathbf V}
\newcommand{\JPG}{\mathcal{JPG}}

\newcommand{\R}{\mathbb R}
\newcommand{\FG}{\mathsf{FG}}
\newcommand{\res}{\mathsf{res}}
\newcommand{\red}{\mathsf{red}}
\newcommand{\gr}{\Gamma}
\newcommand{\birkhoff}[1][n]{\mathcal B_{#1}}
\newcommand{\ov}[1]{\overline{#1}}

\newcommand{\zv}[1]{\mathbf 0^{(#1)}}
\newcommand{\conv}{\mathsf{conv}}
\renewcommand{\wedge}{\mathsf{wedge}}

\title{Faces of Birkhoff Polytopes}

\author[Andreas Paffenholz]{Andreas Paffenholz}
\thanks{The author is supported by the Priority Program 1489 of the German Research Council (DFG)}
\address{TU Darmstadt, Fachbereich Mathematik, Dolivostr. 15, 64293 Darmstadt, Germany}
\email{paffenholz@mathematik.tu-darmstadt.de}

\date{\today}

\begin{document}

\numberwithin{equation}{section}
\begin{abstract}
  The Birkhoff polytope $\birkhoff$ is the convex hull of all $(n\times n)$ permutation matrices,
  \emph{i.e.}, matrices where precisely one entry in each row and column is one, and zeros at all
  other places. This is a widely studied polytope with various applications throughout mathematics.

  In this paper we study combinatorial types $\lattice$ of faces of a Birkhoff polytope. The
  \emph{Birkhoff dimension} $\bdim(\lattice)$ of $\lattice$ is the smallest $n$ such that
  $\birkhoff$ has a face with combinatorial type $\lattice$.

  By a result of Billera and Sarangarajan, a combinatorial type $\lattice$ of a $d$-dimensional face
  appears in some $\birkhoff[k]$ for $k\le 2d$, so $\bdim(\lattice)\le 2d$. We will characterize
  those types with $\bdim(\lattice)\ge 2d-3$, and we prove that any type with $\bdim(\lattice)\ge d$
  is either a product or a wedge over some lower dimensional face. Further, we computationally
  classify all $d$-dimensional combinatorial types for $2\le d\le 8$.
\end{abstract}

\maketitle

\vspace*{1cm}

\section{Introduction}

The Birkhoff polytope $\birkhoff$ is the convex hull of all $(n\times n)$ permutation matrices,
\emph{i.e.}, matrices that have precisely one $1$ in each row and column, and zeros at all other
places.  Equally, $\birkhoff$ is the set of all doubly stochastic $(n\times n)$-matrices,
\emph{i.e.}, non-negative matrices whose rows and columns all sum to $1$, or the perfect matching
polytope of the complete bipartite graph $K_{n,n}$. The Birkhoff polytope $\birkhoff$ has dimension
$(n-1)^2$ with $n!$ vertices and $n^2$ facets. The Birkhoff-von Neumann Theorem shows that
$\birkhoff$ can be realized as the intersection of the positive orthant with a family of
hyperplanes.

Birkhoff polytopes are a widely studied class of
polytopes~\cite{BS96,BG77-1,BG77-2,BG77-3,BG76,1065.52007, CR99, MR960139, MR2575172,Loera2009} with
many applications in different areas of mathematics, \emph{e.g.}, enumerative combinatorics
\cite{Stanley1,1077.52011}, optimization \cite{Tinhofer,MR953322,MR1961267}, statistics
\cite{Pak00,MR1380519}, or representation theory \cite{Onn93,1055.51003}.  Yet, despite all these
efforts, quite fundamental questions about the combinatorial and geometric structure of this
polytope, and its algorithmic treatment, are still open. In particular, we know little about numbers
of faces apart from those of facets, vertices, and edges.

In this paper, we study combinatorial types of faces of Birkhoff polytope. The \emph{combinatorial
  type} of a face $F$ of some Birkhoff polytope is given by its face lattice $\lattice$. For such a
combinatorial type we can define the \emph{Birkhoff dimension} $\bdim(\lattice)$ of $\lattice$ as
the minimal $n$ such that $\birkhoff$ has a face combinatorially equivalent to $\lattice$.  

By a result of Billera and Sarangarajan~\cite{BS96} any combinatorial type of a $d$-dimensional face
of $\birkhoff$ already appears in $\birkhoff[2d]$, so $\bdim(\lattice)\le 2d$.  Here, we
characterize combinatorial types of $d$-dimensional faces with $\bdim(\lattice)\ge 2d-3$. More
precisely, we show in \autoref{thm:CompleteClassification} that the only combinatorial type
$\lattice$ with $\bdim(\lattice)=2d$ is the $d$-cube (\autoref{prop:cube}). If
$\bdim(\lattice)=2d-1$, then $\lattice$ must by a product of a cube and a triangle
(\autoref{prop:2dm1}). $\bdim(\lattice)=2d-2$ allows three new types, a pyramid over a cube, the
product of a cube with a pyramid over a cube, and the product of two triangles with a cube
(\autoref{prod:2dm2}). Finally, faces with $\bdim(\lattice)=2d-3$ are either products or certain
Cayley polytopes of products of lower dimensional faces, the joined products and reduced joined
products defined in \autoref{subsec:2d-3} (\autoref{thm:ClassWedges}).

More generally, we show in \autoref{subsec:wedges} that any combinatorial type $\lattice$ of a
$d$-dimensional face with $\bdim(\lattice)\ge d$ is either a product of two lower dimensional faces,
or a wedge of a lower dimensional face over one of its faces (\autoref{thm:Wedges}). We further
characterize combinatorial types of faces $F$ of some Birkhoff polytope for which the pyramid over
$F$ is again a face of some Birkhoff polytope.

Finally, we enumerate all combinatorial types of $d$-dimensional faces of some $\birkhoff$ for $2\le
d\le 8$. This is done with an algorithm that classifies face graphs corresponding to combinatorially
different faces of $\birkhoff$. The algorithm has been implemented as an extension to the software
system \texttt{polymake}~\cite{polymake_birkhoff,GJ00} for polyhedral geometry
(\autoref{sec:append-low-dimens}). The computed data in \texttt{polymake} format can be found
at~\cite{BirkhoffFaces}.

Following work of Billera and Sarangarajan~\cite{BS96} we use elementary bipartite graphs
(\emph{face graphs}) to represent combinatorial types of faces. A graph is \emph{elementary} if
every edge is contained in some perfect matching in the graph. A perfect matching in a bipartite
graph with $n$ nodes in each layer naturally defines an $(n\times n)$-matrix with entries in
$\{0,1\}$, which gives the correspondence to a face of $\birkhoff$. The correspondence of faces and
graphs is explained in~\autoref{fb:subsec:birkhoff}. We use the language of face graphs in
\autoref{sec:irred} and~\autoref{sec:combtypes} to study the structure of these graphs and the
corresponding faces.

Previously, Brualdi and Gibson have done an extensive study of faces of Birkhoff polytopes in a
series of papers~\cite{BG77-3,BG77-1,BG77-2,BG76}. They used 0/1-matrices to represent types of
faces, which naturally correspond to elementary bipartite graphs by placing edges at all non-zero
entries. They studied combinatorial types of faces with few vertices, the diameter of $\birkhoff$,
and some constructions for new faces from given ones. We review some of their results in
\autoref{sec:combtypes}, as we need them for our constructions in \autoref{sec:manynodes}. 

A fair amount of work also has gone into the computation of the Ehrhart polynomial or the volume of the
Birkhoff polytope. Until recently, only low dimensional cases were known~\cite{CR99,1065.52007}
using a computational approach. In 2009, Canfield and McKay~\cite{MR2575172} obtained an asymptotic
formula for the volume, and in the same year De Loera et al.~\cite{Loera2009} gave an exact formula
by computing the Ehrhart polynomial.

Birkhoff polytopes are a special case of the much more general concept of a permutation
polytope. These are polytopes obtained as the convex hull of all permutation matrices corresponding
to some subgroup $G$ of the the full permutation group $S_n$. So the Birkhoff polytope is the
permutation polytope of $S_n$. Permutation polytopes have been introduced by Guralnick and Perkinson
in~\cite{GP06}. They studied these objects from a group theoretic view point and provided formulas
for the dimension and the diameter. A systematic study of combinatorial properties of general
permutation polytopes and a computational classification of $d$-dimensional permutation polytopes
and $d$-dimensional faces of some higher dimensional permutation polytope for $d\le 4$ can be found
in~\cite{BHNP07-1}.

Several subpolytopes of the Birkhoff polytope have been shown to have an interesting structure and
some beautiful properties. Here, in particular the polytope of \emph{even} permutation matrices
attracted much attention~\cite{CW04,HP04,MR1111560}, but also many other classes of groups have been
considered~\cite{BHNP07-2,1212.4442,MR1749805,CRY00}.

\textbf{Acknowledgments.} This work has benefited from discussions with various people, in
particular Christian Haase and Benjamin Nill. The work on this paper has been supported by a postdoc
position in the Emmy-Noether project HA 4383/1 and the Priority Program SPP 1489 of the German
Research Society (DFG).  The computation of low dimensional faces has been done with an extension to
the software system \texttt{polymake}~\cite{GJ00}.

\section{Background and Basic Definitions}
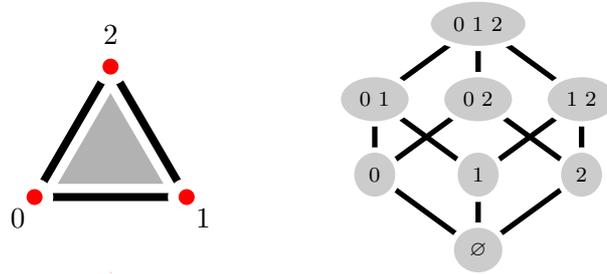
\begin{figure}[tb]
  \centering
  \begin{tikzpicture}[scale=.5]
    \draw [fill=black!30] (0,0) -- (4,0) -- (2,3.46) -- cycle;

    \draw [line width=10pt,color=white] (0,0) -- (4,0);
    \draw [line width=10pt,color=white] (0,0) -- (2,3.46);
    \draw [line width=10pt,color=white] (4,0) -- (2,3.46);

    \draw [line width=3pt] (0,0) -- (4,0);
    \draw [line width=3pt] (0,0) -- (2,3.46);
    \draw [line width=3pt] (4,0) -- (2,3.46);

    \draw [color=white,fill=red,line width=4pt] (0,0) circle (10pt);
    \draw [color=white,fill=red,line width=4pt] (4,0) circle (10pt);
    \draw [color=white,fill=red,line width=4pt] (2,3.46) circle (10pt);

    \draw [color=white,fill=red,line width=4pt] (2,-2) circle (0pt);

    \node at (0,0) [left,yshift=-8pt] {\large$0$};
    \node at (4,0) [right,yshift=-8pt] {\large$1$};
    \node at (2,3.46) [above,yshift=5pt] {\large$2$};
  \end{tikzpicture}
  \qquad\qquad
  \begin{tikzpicture}[yscale=.5,xscale=.68]
    \tikzstyle{every node}=[color=white, text=black, ellipse, draw, line width=2pt, fill=black!20,
    inner sep=4pt, minimum width=2pt]
    \draw [line width=2pt] (2,0) -- (0,2);
    \draw [line width=2pt] (2,0) -- (2,2);
    \draw [line width=2pt] (2,0) -- (4,2);

    \draw [line width=2pt] (0,4) -- (0,2);
    \draw [line width=2pt] (2,4) -- (0,2);
    \draw [line width=2pt] (4,4) -- (2,2);
    \draw [line width=2pt] (2,4) -- (4,2);
    \draw [line width=2pt] (0,4) -- (2,2);
    \draw [line width=2pt] (4,4) -- (4,2);

    \draw [line width=2pt] (2,6) -- (0,4);
    \draw [line width=2pt] (2,6) -- (2,4);
    \draw [line width=2pt] (2,6) -- (4,4);

    \node at (0,4) {\footnotesize$0\ 1$};
    \node at (2,4) {\footnotesize$0\ 2$};
    \node at (4,4) {\footnotesize$1\ 2$};

    \node at (0,2) {\footnotesize$0$};
    \node at (2,2) {\footnotesize$1$};
    \node at (4,2) {\footnotesize$2$};

    \node at (2,0) {\footnotesize$\varnothing$};
    \node at (2,6) {\footnotesize$0\ 1\ 2$};
  \end{tikzpicture}
  \caption{A triangle and its face lattice.}
  \label{fig:facelattice}
\end{figure}

\subsection{Polytopes}

A \emph{polytope} $P\subseteq \R^d$ is the convex hull $P=\conv(v_1, \ldots, v_k)$ of a finite set
of points $v_1, \ldots, v_k\in\R^d$. Dually, any polytope can be written as the bounded intersection
of a finite number of affine half-spaces in the form $P:=\{x\mid Ax\le b\}$. We repeat some notions
relevant for polytopes. For a thorough discussion and proofs we refer to~\cite{Ziegler}.

A (proper) \emph{face} $F$ of a polytope $P$ is the intersection of $P$ with an affine hyperplane
$H$ such that $P$ is completely contained in one of the closed half-spaces defined by $H$. (The
intersection may be empty.) We also call the empty set and the polytope $P$ a face of $P$. Any face
$F$ is itself a polytope. The dimension of a polytope $P\subseteq \R^d$ is the dimension of the
minimal affine space containing it. It is \emph{full-dimensional} if its dimension is $d$.

$0$-dimensional faces of $P$ are called \emph{vertices}, $1$-dimensional faces are
\emph{edges}. Proper faces of maximal dimension are called \emph{facets}. $P$ is the convex hull of
its vertices, and the vertices of any face are a subset of the vertices of $P$. Thus, a polytope has
only a finite number of faces. Let $f_i$ be the number of $i$-dimensional faces of $P$, $0\le i\le
\dim P-1$. The $f$-vector of a $d$-dimensional polytope $P$ is the non-negative integral vector
$f(P):=(f_0, \ldots, f_{d-1})$.

Inclusion of sets defines a partial order on the faces of a polytope. The \emph{face lattice} or
\emph{combinatorial type} $\lattice(P)$ of a polytope $P$ is the partially ordered set of all faces
of $P$ (including the empty face and $P$ itself). This defines a Eulerian lattice. See
\autoref{fig:facelattice} for an example. It contains all combinatorial information of the
polytope. Two polytopes $P, P'$ are \emph{combinatorially isomorphic} or \emph{have the same
  combinatorial type} if their face lattices are isomorphic as posets.  For any given Eulerian
lattice $\mathcal L$ we call a subset $P\subset\R^d$ a \emph{geometric realization} of $\mathcal L$
if $P$ is a polytope with a face lattice isomorphic to $\mathcal L$.  Note, that not all Eulerian
lattices are a face lattice of a polytope.

An \emph{$r$-dimensional simplex} (or $r$-simplex) is the convex hull of $r+1$ affinely independent
points in $\R^d$.  A polytope is called \emph{simplicial} if all facets are simplices. It is
\emph{simple} if the dual is simplicial. Equally, a $d$-dimensional polytope $P$ is simple if each
vertex is incident to precisely $d$ edges. The \emph{$d$-dimensional $0/1$-cube $C^d$} is the convex
hull of all $d$-dimensional $0/1$-vectors.  This is a simple $d$-polytope with $2^d$ vertices and
$2d$ facets.  More generally, we denote by a \emph{$d$-cube} any $d$-dimensional polytope that is
combinatorially isomorphic to the $0/1$-cube (it need not be full dimensional).

Let $P_1\subset\R^{d_1}$ and $P_2\subset\R^{d_2}$ be two (geometrically realized) polytopes with
vertex sets $\vertices(P_1)=\{v_1, \ldots, v_k\}$ and $\vertices(P_2)=\{w_1, \ldots, w_l\}$. With
$\zv{d}$ we denote the $d$-dimensional zero vector.

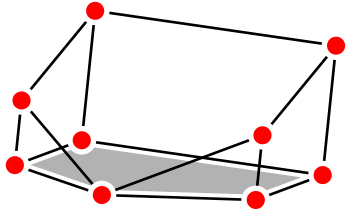
\begin{figure}[bt]
  \centering
\tikzstyle{perspective adjusted birkhoff}=[%
x={(.8,.3)},%
y={(.2,1.5)},%
z={(2.4,-.5)}]
  \begin{tikzpicture}[perspective adjusted birkhoff,scale=1.1]
    \foreach \x in {0,1,2} {
      \foreach \y in {0,1,2} {
        \foreach \z in {0,1,2} {
          \coordinate (a\x\y\z) at (\x,\z/2,\y);
        }
      }
    }

    \fill[line width=1pt,black!30] (a010) -- (a020) -- (a120) -- (a200) -- (a100) -- cycle;

    \draw [line width=4pt,white] (a010) -- (a020) -- (a120) -- (a200) -- (a100) -- cycle;
    \draw [line width=1pt, black] (a010) -- (a020) -- (a120) -- (a200) -- (a100) -- cycle;
    \draw [line width=1pt] (a010) -- (a021) -- (a122) -- (a202) -- (a101) --
    cycle;
    \draw [line width=1pt] (a020) -- (a021);
    \draw [line width=1pt] (a120) -- (a122);
    \draw [line width=1pt] (a200) -- (a202);
    \draw [line width=1pt] (a100) -- (a101);
      
    \draw [color=white,fill=red,line width=2pt] (a010) circle (4pt);
    \draw [color=white,fill=red,line width=2pt] (a020) circle (4pt);
    \draw [color=white,fill=red,line width=2pt] (a120) circle (4pt);
    \draw [color=white,fill=red,line width=2pt] (a100) circle (4pt);
    \draw [color=white,fill=red,line width=2pt] (a200) circle (4pt);
    \draw [color=white,fill=red,line width=2pt] (a021) circle (4pt);
    \draw [color=white,fill=red,line width=2pt] (a122) circle (4pt);
    \draw [color=white,fill=red,line width=2pt] (a101) circle (4pt);
    \draw [color=white,fill=red,line width=2pt] (a202) circle (4pt);

  \end{tikzpicture}
  \caption{The wedge over a vertex of a pentagon.}
  \label{fig:wedge}
\end{figure}
The \emph{(geometric) product} of $P_1$ and $P_2$ is the polytope
\begin{align*}
  P_1\times P_2\ :=\ \conv\left(\,(v_i,w_j)\in\R^{d_1+d_2}\mid 1\le i\le k, 1\le j\le l\,\right).
\end{align*}
This is the same as the set of all points $(v,w)$ for $v\in P_1$ and $w\in P_2$.  The
\emph{(geometric) join} of $P_1$ and $P_2$ is the polytope
\begin{align*}
  P_1\star P_2\ :=\ \conv\left(\,P_1\times\{\zv{d_2}\}\times\{0\}\cup \{\zv{d_1}\}\times
    P_2\times\{1\}\,\right)\ \subseteq\ \R^{d_1+d_2+1}.
\end{align*}
More generally, we say, that a polytope $P$ is a \emph{product} or \emph{join} of two polytopes
$P_1$ and $P_2$, if $P$ is combinatorially isomorphic to the geometric product or geometric join of
some realizations of the face lattices of $P_1$, or $P_2$.

If $F$ is a face of a polytope $P:=\{x\mid Ax\le b\}\subseteq\R^d$ and $\langle c, x\rangle\le d$ a
linear functional defining $F$, then the \emph{wedge} $\wedge_F(P)$ of $P$ over $F$ is defined to be
the polytope
\begin{align}\label{eq:faces:wedge}
  \wedge_F(P)\ :=\ \left\{\, (x,x_0)\in\R^{d+1}\mid Ax\le b,\, 0\le x_0\le
  d-\langle c, x\rangle\,\right\}\,.
\end{align}
See \autoref{fig:wedge} for an example. Again, we say more generally that $P$ is a \emph{wedge} of a
polytope $Q$ over some face $F$ of $Q$ if $P$ is combinatorially equivalent to $\wedge_F(Q)$.

We also extend these notions to combinatorial types, \emph{i.e.}, we say that a combinatorial type
$\lattice$ (or face lattice) of a polytope $P$ is a \emph{cube}, \emph{simplex}, \emph{product},
\emph{join}, or \emph{wedge}, if some geometric realization (and, hence, also any other) of
$\lattice$ is.

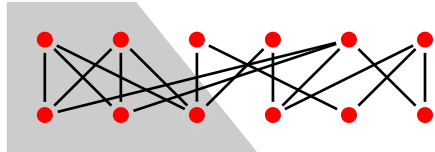
\begin{figure}[b]
  \centering
  \begin{tikzpicture}[scale=1]
    \foreach \x in {0,1,2,3,4,5} {
      \coordinate (u\x) at (\x,1);
      \coordinate (v\x) at (\x,0);
    }
      
    \fill[color=black!20] (-.5,-.5) -- (-.5,1.5) -- (1.2,1.5) -- (2.8,-.5)  -- cycle;

    \draw [line width=1pt] (u0) -- (v0);
    \draw [line width=1pt] (u0) -- (v1);
    \draw [line width=1pt] (u0) -- (v2);
    \draw [line width=1pt] (u1) -- (v1);
    \draw [line width=1pt] (u1) -- (v2);
    \draw [line width=1pt] (u1) -- (v0);
    \draw [line width=1pt] (u2) -- (v2);
    \draw [line width=1pt] (u2) -- (v4);
    \draw [line width=1pt] (u3) -- (v3);
    \draw [line width=1pt] (u3) -- (v2);
    \draw [line width=1pt] (u4) -- (v3);
    \draw [line width=1pt] (u4) -- (v5);
    \draw [line width=1pt] (u4) -- (v1);
    \draw [line width=1pt] (u4) -- (v0);
    \draw [line width=1pt] (u5) -- (v3);
    \draw [line width=1pt] (u5) -- (v5);
    \draw [line width=1pt] (u5) -- (v4);
    \foreach \x in {2,3,4,5} {
      \draw [color=white,fill=red,line width=2pt] (u\x) circle (4pt);
      \draw [color=white,fill=red,line width=2pt] (v\x) circle (4pt);
    }
    \foreach \x in {0,1} {
      \draw [color=black!20,fill=red,line width=2pt] (u\x) circle (4pt);
      \draw [color=black!20,fill=red,line width=2pt] (v\x) circle (4pt);
    }
    \draw [color=black!20,fill=red,line width=2pt] (v2) circle (4pt);
  \end{tikzpicture}
  \caption{The common neighbors of the first three nodes in the lower
    layer are the first two nodes in the upper layer.}
  \label{fig:commonNeighbors}
\end{figure}
With $\neigh(v)$ for a node $v$ of a graph $G$ we denote the \emph{neighborhood} of $v$,
\emph{i.e.}, the set of all nodes in $G$ that are connected to $v$ by an edge. If $M$ is a set of
nodes in $G$, then we denote by $\cneigh(M)$ the \emph{set of common neighbors} of all nodes in $M$,
\emph{i.e.}, the set
\begin{align*}
  \cneigh(M)\ :=\ \bigcap_{v\in M}\neigh(v)\,.
\end{align*}
See \autoref{fig:commonNeighbors} for an example.

\subsection{The Birkhoff polytope}
\label{fb:subsec:birkhoff}
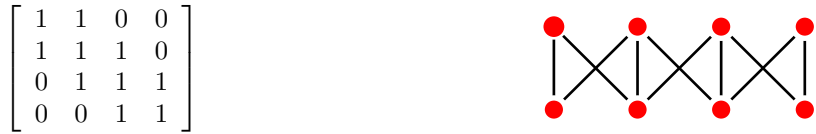
\begin{figure}[t]
  \centering
  \begin{minipage}{.4\linewidth}
    $\left[
      \begin{array}{rrrr}
        1& 1& 0& 0\\ 1& 1& 1& 0\\ 0& 1& 1& 1\\ 0& 0& 1& 1
      \end{array}
    \right]$
  \end{minipage}
  \quad 
  \begin{minipage}{.4\linewidth}
    \centering 
    \begin{tikzpicture}[scale=1.1,line join=round]
      \draw[line width=1pt] (1,1)--(1,0);
      \draw[line width=1pt] (2,1)--(3,0);
      \draw[line width=1pt] (3,1)--(2,0);
      \draw[line width=1pt] (1,1)--(2,0);
      \draw[line width=1pt] (2,1)--(2,0);
      \draw[line width=1pt] (2,1)--(1,0);
      \draw[line width=1pt] (3,1)--(3,0);
      \draw[line width=1pt] (3,1)--(4,0);
      \draw[line width=1pt] (4,1)--(3,0);
      \draw[line width=1pt] (4,1)--(4,0);
      \draw[color=white,fill=red,line width=1pt] (1,1) circle (4pt);
      \draw[color=white,fill=red,line width=2pt] (2,1) circle (4pt);
      \draw[color=white,fill=red,line width=2pt] (3,1) circle (4pt);
      \draw[color=white,fill=red,line width=2pt] (4,1) circle (4pt);
      \draw[color=white,fill=red,line width=2pt] (1,0) circle (4pt);
      \draw[color=white,fill=red,line width=2pt] (2,0) circle (4pt);
      \draw[color=white,fill=red,line width=2pt] (3,0) circle (4pt);
      \draw[color=white,fill=red,line width=2pt] (4,0) circle (4pt);
    \end{tikzpicture}
  \end{minipage}
  \caption{A face of $\birkhoff[3]$ and its graph. The upper layer represents
    the rows, the lower layer the columns of the matrix. An edge of
    the graph represents a $1$ in the matrix at the position
    corresponding to its end points.}
\label{fig:second-example}
\end{figure}

Let $S_n$ be the group of permutations on $n$ elements. To any element $\sigma\in S_n$ we can
associate a $0/1$-matrix $M(\sigma)\in\R^{n\times n}$ that has a $1$ at position $(i,j)$ if and only
if $\sigma(i)=j$. The \emph{$n$-th Birkhoff polytope} is
\begin{align*}
  \birkhoff\ :=\ \conv\left(\,M(\sigma)\,\mid\, \sigma\in S_n\right)\ \subseteq\ \R^{n\times n}\,.
\end{align*}
The Birkhoff-von Neumann Theorem shows that $\birkhoff$ can equally be characterized as the set of
all non-negative $(n\times n)$-matrices whose rows and columns all sum to $1$. Equivalently, the
facets of $\birkhoff$ are precisely defined by the inequalities $x_{ij}\ge 0$ for $1\le i, j\le
n$. It has dimension $(n-1)^2$ with $n^2$ facets and $n!$ vertices.

More generally, we associate a $0/1$-matrix $M(\Sigma)\in\R^{n\times n}$ to any subset
$\Sigma\subseteq S_n$ in the following way. $M(\Sigma)$ has a $1$ at position $(i,j)$ if there is
some $\tau\in \Sigma$ with $\tau(i)=j$, and $0$ otherwise. If $\Sigma=\{\sigma\}$ for some
$\sigma\in S_n$, then $M(\Sigma)=M(\sigma)$.

We can view $M(\Sigma)$ as a dual vector in $(\R^{n\times n})^*$. The functional $M(\Sigma)$
satisfies
\begin{align*}
  \langle M(\Sigma), x\rangle\ \le\ n\ \ \text{for all } x\in \birkhoff\,.
\end{align*}
Any $x=M(\sigma)$ for a $\sigma\in \Sigma$ satisfies this with equality, so this inequality defines
a proper non-empty face
\begin{align*}
  \face(\Sigma)\ :=\ \left\{\,M(\sigma)\,\mid\, \langle M(\Sigma), M(\sigma)\rangle = n\,\right\}\,.
\end{align*}
of the polytope $\birkhoff$, and all $\sigma\in \Sigma$ are vertices of that face.  However, there
may be more. Namely, any permutation $\tau$ such that for any $i\in [n]$ there is $\sigma\in \Sigma$
with $\tau(i)=\sigma(i)$ is also a vertex of $\face(\Sigma)$.  The well-known fact that any face is
defined by a subset of the facet inequalities implies the following proposition.
\begin{proposition}
  Any face $F$ of $\birkhoff$ is of the type $\face(\Sigma)$ for some $\Sigma\subseteq S_n$, and
  $\face(\Sigma)$ is the smallest face containing all vertices corresponding to elements of
  $\Sigma$.
\end{proposition}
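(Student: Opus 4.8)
The plan is to lean on the facet description of $\birkhoff$ recalled just above the statement, together with the quoted ``well-known fact'' that every face of a polytope is the intersection of the polytope with a subfamily of its facet-defining hyperplanes. Since the facets of $\birkhoff$ are exactly the coordinate hyperplanes $x_{ij}=0$, this fact says that every proper face $F$ has the form $F=\{x\in\birkhoff\mid x_{ij}=0\text{ for all }(i,j)\in S\}$ for some set $S\subseteq[n]\times[n]$ of matrix positions. First I would translate $S$ into permutation data. Given such an $F$, let $\Sigma$ be the set of all $\sigma\in S_n$ whose matrix $M(\sigma)$ is a vertex of $F$; equivalently, $\sigma\in\Sigma$ precisely when $\sigma(i)\ne j$ for every $(i,j)\in S$. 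Put $T:=\{(i,j)\mid M(\Sigma)_{ij}=0\}$, the set of positions used by no vertex of $F$. Two elementary observations drive everything: (i) each row of a doubly stochastic matrix sums to $1$, so $\sum_{i,j}x_{ij}=n$ for every $x\in\birkhoff$, and hence $\langle M(\Sigma),x\rangle=\sum_{(i,j)\notin T}x_{ij}=n-\sum_{(i,j)\in T}x_{ij}\le n$ with equality exactly when $x_{ij}=0$ for all $(i,j)\in T$; and (ii) $S\subseteq T$, since every vertex of $F$ already vanishes on $S$.

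Next I would establish $F=\face(\Sigma)$ by two inclusions, where I view $\face(\Sigma)$ as the face $\{x\in\birkhoff\mid\langle M(\Sigma),x\rangle=n\}$, whose vertex set is the one displayed in the text. For $F\subseteq\face(\Sigma)$: any $x\in F$ is a convex combination of vertices $M(\sigma)$ with $\sigma\in\Sigma$, and each such vertex vanishes on all of $T$, so $x$ vanishes on $T$ as well and therefore satisfies $\langle M(\Sigma),x\rangle=n$ by (i), i.e. $x\in\face(\Sigma)$. For the reverse inclusion, observation (i) identifies $\face(\Sigma)$ with $\{x\in\birkhoff\mid x_{ij}=0\text{ for }(i,j)\in T\}$, and since $S\subseteq T$ by (ii), vanishing on $T$ forces vanishing on $S$; hence $\face(\Sigma)\subseteq F$. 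This proves the first assertion.

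For the minimality statement I would first note that $\face(\Sigma)$ does contain every vertex coming from $\Sigma$: for $\sigma\in\Sigma$ we have $M(\Sigma)_{i,\sigma(i)}=1$ for all $i$, so $\langle M(\Sigma),M(\sigma)\rangle=n$. Then, if $G$ is any face containing all these vertices, writing $G=\{x\in\birkhoff\mid x_{ij}=0\text{ for }(i,j)\in S'\}$ forces every $M(\sigma)$, $\sigma\in\Sigma$, to vanish on $S'$, which means $S'\subseteq T$. The defining equalities of $G$ are thus a subset of those of $\face(\Sigma)$, so $\face(\Sigma)\subseteq G$. Hence $\face(\Sigma)$ is the smallest face containing all vertices corresponding to $\Sigma$.

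I expect no deep obstacle here: once the facet characterization is granted, the argument is bookkeeping. The only points that need genuine care are keeping the direction of the inclusion $S\subseteq T$ straight and recognizing that it is the row-sum identity $\sum_{i,j}x_{ij}=n$ that upgrades the functional bound $\langle M(\Sigma),x\rangle\le n$ into an \emph{exact} description of its equality set as the vanishing locus on $T$; this equivalence between ``the functional is tight'' and ``the matrix is supported away from $T$'' is the crux that makes both inclusions, and the minimality argument, line up.
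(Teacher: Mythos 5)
Your proof is correct and takes essentially the same approach as the paper: the paper derives this proposition in one line from the ``well-known fact that any face is defined by a subset of the facet inequalities'' $x_{ij}\ge 0$, and your argument is precisely that deduction with the bookkeeping written out (the sets $S$ and $T$, and the identification of the equality set of $\langle M(\Sigma),x\rangle\le n$ with the vanishing locus on $T$). The only difference is that the paper leaves these details implicit, while you supply them explicitly, including the minimality argument.
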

Different subsets of $S_n$ may define the same face of $\birkhoff$, so this correspondence is not a
bijection. For example, $\face(\Sigma)$ is the same square in $\birkhoff[4]$ for either of the sets
$\Sigma=\{(), (1\ 2)(3\ 4)\}\subset S_4$ and $\Sigma=\{(1\ 2), (3\ 4)\}\subset S_n$ (and the
vertices of the square correspond to the union of those two sets).

For the following considerations there is a different representation of faces that is easier to deal
with. For any subset $\Sigma\subseteq S_n$ we associate a bipartite graph $\gr(\Sigma)$ with $n$
nodes in each color class to the matrix $M(\Sigma)$.  Let $U = \{u_1, \ldots, u_n\}$ and $V=\{v_1,
\ldots, v_n\}$ be two disjoint vertex sets and draw an edge between the nodes $u_i$ and $v_j$ if and
only if there is $\sigma\in\Sigma$ with $\sigma(i)=j$. This gives a bipartite graph with two color
classes $U$ and $V$ of equal size $n$. In the following, we call $U$ the \emph{upper layer} and $V$
the \emph{lower layer}. \autoref{fig:second-example} shows an example. In this example, $\Sigma$ can
be chosen to contain the identity permutation and the transpositions $(1\ 2)$, $(2\ 3)$ and $(3\
4)$. The face $\face(\Sigma)$ also contains the vertex corresponding to the permutation $(1\ 2)(3\
4)$.  Clearly, the bipartite graph is just a different representation of the matrix. We can recover
the matrix by putting a $1$ at each position $(i,j)$ where node $i$ of the upper layer is connected
to node $j$ of the lower layer, and $0$ everywhere else.

Any vertex of the face $\face(\Sigma)$ corresponds to a perfect matching in the graph $\gr(\Sigma)$,
and any perfect matching in the graph defines a vertex. Conversely, any bipartite graph with the
property that every edge is contained in a perfect matching defines a face of $\birkhoff$ as the
convex hull of the permutations defined by its perfect matchings. In the following, we will use the
term \emph{face graph} for bipartite graphs such that each edge is contained in some perfect
matching of the graph.

Note, that in the literature graphs in which every edge is in some perfect matching are called
\emph{elementary}. So a face graph is a bipartite elementary graph.  Elementary graphs are
well-studied objects, see, \emph{e.g.}, the work of Lov{\'a}sz~\cite{Lovasz83}, Lov{\'a}sz and
Plummer~\cite{LP86}, Brualdi and Shader~\cite{BS93}, and the extensive work of de Carvalho, Lucchesi
and Murty~\cite{dCLM05,dCLM03,dCLM02-1,dCLM02-2,dCLM99}.  In the special case of bipartite graphs,
being elementary implies that both layers have the same number of nodes. An important property of
elementary graphs is the existence of an ear decomposition, which we will explain now.

\begin{figure}[t]
  \centering
    \subfigure[A face graph $G$ with three ears $P_1$, $P_2$, and $P_3$]{%
      \begin{tikzpicture}[line join=round,scale=1.1]
        \filldraw(-2,0) circle (0pt);
        \filldraw(6,0) circle (0pt);
        \draw[line width=1pt] (1,1)--(1,0);
        \draw[line width=1pt] (2,1)--(3,0);
        \draw[line width=1pt] (3,1)--(2,0);
        \draw[line width=1pt] (1,1)--(2,0);
        \draw[line width=1pt] (2,1)--(2,0);
        \draw[line width=1pt] (2,1)--(1,0);
        \draw[line width=1pt] (3,1)--(3,0);
        \draw[line width=1pt] (3,1)--(4,0);
        \draw[line width=1pt] (4,1)--(3,0);
        \draw[line width=1pt] (4,1)--(4,0);
        \draw[color=white,fill=red,line width=2pt] (1,1) circle (4pt);
        \draw[color=white,fill=red,line width=2pt] (2,1) circle (4pt);
        \draw[color=white,fill=red,line width=2pt] (3,1) circle (4pt);
        \draw[color=white,fill=red,line width=2pt] (4,1) circle (4pt);
        \draw[color=white,fill=red,line width=2pt] (1,0) circle (4pt);
        \draw[color=white,fill=red,line width=2pt] (2,0) circle (4pt);
        \draw[color=white,fill=red,line width=2pt] (3,0) circle (4pt);
        \draw[color=white,fill=red,line width=2pt] (4,0) circle (4pt);
      \end{tikzpicture}
    }%

    \subfigure[The ear $P_1$]{%
      \begin{tikzpicture}[line join=round,scale=1.1]
        \draw[line width=1pt] (1,1)--(1,0);
        \draw[line width=1pt] (1,1)--(2,0);
        \draw[line width=1pt] (2,1)--(2,0);
        \draw[line width=1pt] (2,1)--(1,0);
        \draw[line width=1pt,color=gray,dashed](3,1)--(3,0);
        \draw[line width=1pt,color=gray,dashed](2,1)--(3,0);
        \draw[line width=1pt,color=gray,dashed](3,1)--(2,0);
        \draw[line width=1pt,color=gray,dashed](3,1)--(4,0);
        \draw[line width=1pt,color=gray,dashed](4,1)--(3,0);
        \draw[line width=1pt,color=gray,dashed](4,1)--(4,0);
        \draw[color=white,fill=red,line width=2pt] (1,1) circle (4pt);
        \draw[color=white,fill=red,line width=2pt] (2,1) circle (4pt);
        \draw[color=white,fill=red,line width=2pt] (3,1) circle (4pt);
        \draw[color=white,fill=red,line width=2pt] (4,1) circle (4pt);
        \draw[color=white,fill=red,line width=2pt] (1,0) circle (4pt);
        \draw[color=white,fill=red,line width=2pt] (2,0) circle (4pt);
        \draw[color=white,fill=red,line width=2pt] (3,0) circle (4pt);
        \draw[color=white,fill=red,line width=2pt] (4,0) circle (4pt);
      \end{tikzpicture}
    }\quad\qquad%
    \subfigure[The ear $P_2$]{%
      \begin{tikzpicture}[line join=round,scale=1.1]
        \draw[line width=1pt] (2,1)--(3,0);
        \draw[line width=1pt] (3,1)--(2,0);
        \draw[line width=1pt] (3,1)--(4,0);
        \draw[line width=1pt] (4,1)--(3,0);
        \draw[line width=1pt] (4,1)--(4,0);
        \draw[line width=1pt,color=gray,dashed](1,1)--(1,0);
        \draw[line width=1pt,color=gray,dashed](1,1)--(2,0);
        \draw[line width=1pt,color=gray,dashed](2,1)--(1,0);
        \draw[line width=1pt,color=gray,dashed](2,1)--(2,0);
        \draw[line width=1pt,color=gray,dashed](3,1)--(3,0);
        \draw[color=white,fill=red,line width=2pt] (1,1) circle (4pt);
        \draw[color=white,fill=red,line width=2pt] (2,1) circle (4pt);
        \draw[color=white,fill=red,line width=2pt] (3,1) circle (4pt);
        \draw[color=white,fill=red,line width=2pt] (4,1) circle (4pt);
        \draw[color=white,fill=red,line width=2pt] (1,0) circle (4pt);
        \draw[color=white,fill=red,line width=2pt] (2,0) circle (4pt);
        \draw[color=white,fill=red,line width=2pt] (3,0) circle (4pt);
        \draw[color=white,fill=red,line width=2pt] (4,0) circle (4pt);
      \end{tikzpicture}
    }\quad\qquad%
    \subfigure[The ear $P_3$]{%
      \begin{tikzpicture}[line join=round,scale=1.1]
        \draw[line width=1pt] (3,1)--(3,0);
        \draw[line width=1pt,color=gray,dashed](1,1)--(1,0);
        \draw[line width=1pt,color=gray,dashed](1,1)--(2,0);
        \draw[line width=1pt,color=gray,dashed](2,1)--(1,0);
        \draw[line width=1pt,color=gray,dashed](2,1)--(2,0);
        \draw[line width=1pt,color=gray,dashed](2,1)--(3,0);
        \draw[line width=1pt,color=gray,dashed](3,1)--(2,0);
        \draw[line width=1pt,color=gray,dashed](3,1)--(4,0);
        \draw[line width=1pt,color=gray,dashed](4,1)--(3,0);
        \draw[line width=1pt,color=gray,dashed](4,1)--(4,0);
        \draw[color=white,fill=red,line width=2pt] (1,1) circle (4pt);
        \draw[color=white,fill=red,line width=2pt] (2,1) circle (4pt);
        \draw[color=white,fill=red,line width=2pt] (3,1) circle (4pt);
        \draw[color=white,fill=red,line width=2pt] (4,1) circle (4pt);
        \draw[color=white,fill=red,line width=2pt] (1,0) circle (4pt);
        \draw[color=white,fill=red,line width=2pt] (2,0) circle (4pt);
        \draw[color=white,fill=red,line width=2pt] (3,0) circle (4pt);
        \draw[color=white,fill=red,line width=2pt] (4,0) circle (4pt);
      \end{tikzpicture}
    }%
  \caption{An ear decomposition of an elementary graph}
  \label{fig:eardecomp}
\end{figure}
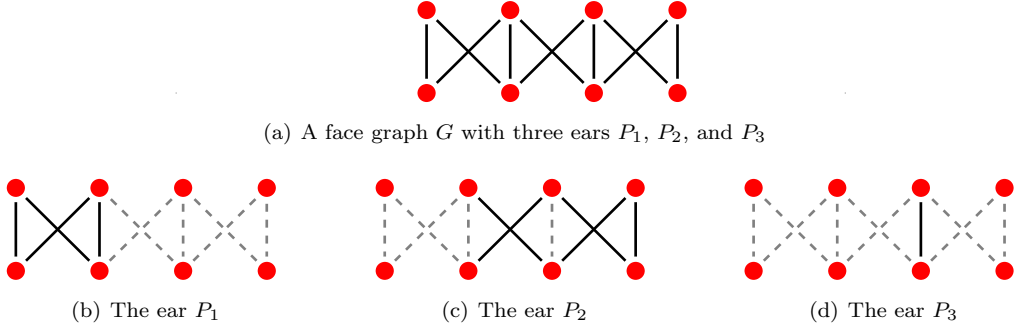
\begin{definition}
  An \emph{ear decomposition} of an elementary graph $G$ is a decomposition of the graph into edge
  disjoint paths and cycles $P_1, P_2, \ldots, P_r$ subject to the following two conditions:
  \begin{enumerate}
  \item $P_1$ is a cycle.
  \item If $P_i$, $1\le i\le r$ is a path, then its endpoints lie in different layers of $G$. These
    are the only two points that $P_i$ has in common with the graph $P_1\cup P_2\cup \ldots\cup
    P_{i-1}$.
  \item If $P_i$, $1\le i\le r$ is a cycle, then $P_i$ is disjoint from $P_1\cup P_2\cup \ldots\cup
    P_{i-1}$.
  \end{enumerate}
\end{definition}
See \autoref{fig:eardecomp} for an example.  The following result can, \emph{e.g.}, be found in the
book of Lov{\'a}sz and Plummer \cite[Thm.\ 4.1.6]{LP86}.
\begin{theorem}
  A bipartite graph $G$ is elementary if and only if all its connected components have an ear
  decomposition.  \qed
\end{theorem}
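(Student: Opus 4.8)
The plan is to first reduce to the connected case and then prove the two implications separately, building on two auxiliary facts. Being elementary is a component-wise property: a perfect matching of $G$ restricts to one on each component, and perfect matchings of the components assemble to one of $G$, so an edge lies in a perfect matching of $G$ exactly when it lies in one of its own component. Thus $G$ is elementary if and only if each component is, and it suffices to treat a connected bipartite elementary graph $H$ with layers $A,B$. A single-edge component contains no cycle and is taken as the trivial base (reading $P_1$ as that edge), so assume $|A|=|B|\ge 2$; then $H$ is not a tree and hence contains an even cycle. I will use a \emph{neighborhood lemma}: for every $\emptyset\neq S\subsetneq A$ the set $N(S)$ of neighbors of $S$ satisfies $|N(S)|\ge|S|+1$. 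Indeed $|N(S)|=|S|$ would force every perfect matching to match $S$ onto $N(S)$, so no edge between $A\setminus S$ and $N(S)$ could lie in a matching; the absence of such edges makes $S\cup N(S)$ a proper union of components, contradicting connectivity. In particular $H-u-w$ still satisfies Hall's condition and has a perfect matching for any $u\in A$, $w\in B$.

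The second fact is an \emph{ear lemma}: if $H$ is connected bipartite elementary and $P$ is an odd path (or single edge) whose endpoints lie in $H$ in \emph{different} layers and whose interior vertices and edges avoid $H$, then $H\cup P$ is again connected, bipartite and elementary. Bipartiteness follows from the odd length. To cover an edge $f\in E(H)$, combine a perfect matching of $H$ through $f$ with the interior matching $x_1x_2,x_3x_4,\dots$ of $P$; to cover an edge of $P$, use one of the two alternating matchings of $P$, extending by a perfect matching of $H$ when both endpoints of $P$ stay uncovered, and by a perfect matching of $H-u-w$ (which exists by the neighborhood lemma) when they are covered. Adding a disjoint even cycle trivially keeps each component elementary. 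The implication ``ear decomposition $\Rightarrow$ elementary'' is now immediate by induction on the number of ears, the base case being an even cycle.

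For the converse I grow a subgraph while keeping one perfect matching $M$ of $H$ fixed, calling a subgraph $K$ \emph{nice} if $M$ restricts to a perfect matching both of $K$ and of $H-\vertices(K)$. I start with a nice even cycle $P_1$: for any perfect matching $M'\neq M$ (which exists since every non-$M$ edge lies in some matching), a component of $M\triangle M'$ is an $M$-alternating cycle, and it is nice because its vertices are $M$-matched within it. Given a nice, connected, elementary $K=P_1\cup\dots\cup P_i\neq H$, if an edge $e=uw$ of $H$ joins two vertices of $K$ but is missing from $K$ it is a single-edge ear; otherwise $\vertices(K)\subsetneq\vertices(H)$ and connectivity yields an edge $e=uv$ with $u\in\vertices(K)$, $u\in A$ say, and $v\notin\vertices(K)$. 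Choosing a perfect matching $M_e\ni e$, we have $e\notin M$, so $e$ lies on an $M$-alternating cycle of $M\triangle M_e$; let $P$ be its segment from $u$ along $e$ up to the first return $w$ to $\vertices(K)$.

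The step I expect to be the main obstacle is verifying that $P$ is a \emph{legal} ear and that niceness survives. The parity is the crux: the edge of $P$ arriving at $w$ cannot be an $M$-edge, since then $w$ would be $M$-matched to the preceding interior vertex outside $K$, contradicting that $w\in\vertices(K)$ is $M$-matched inside $K$; hence it lies in $M_e\setminus M$, the path $P$ has odd length, and $w$ lies in $B$, opposite to $u$. Niceness is preserved by the \emph{same} $M$, because the interior of $P$ is $M$-matched within itself, so $M$ still restricts to perfect matchings of $K\cup P$ and of $H-\vertices(K\cup P)$; the ear lemma then makes $K\cup P$ elementary, and it is connected as $P$ meets $K$ only at its endpoints. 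Since every step adds an edge, the process terminates at $K=H$, producing the ear decomposition.
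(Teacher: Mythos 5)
The paper itself contains no proof of this theorem: it is stated with a reference to Lov\'asz and Plummer (Thm.~4.1.6 of \emph{Matching Theory}) in place of an argument. So your proposal can only be measured against the standard literature proof, and for the hard direction (elementary $\Rightarrow$ ear decomposition) it matches that proof in substance and is correct and complete. The reduction to connected graphs, the neighborhood lemma $|N(S)|\ge |S|+1$ for $\emptyset\neq S\subsetneq A$ (which the paper also records in its discussion of face graphs), the resulting perfect matching of $H-u-w$ for $u,w$ in opposite layers, and the growth argument that keeps one fixed perfect matching $M$ ``nice'' are all sound. In particular the parity step --- the edge by which the alternating path first re-enters $V(K)$ cannot be an $M$-edge, because that vertex is already $M$-matched inside $K$, so the ear has odd length and its endpoints lie in opposite layers --- is exactly the point where such arguments usually fail, and you handle it correctly; your construction even yields the stronger conclusion that all intermediate unions are connected and only the first ear is a cycle. (You also correctly noticed the $K_2$ wrinkle: a single-edge component is elementary but has no decomposition starting with a cycle, so the base must be allowed to be an edge, as in Lov\'asz--Plummer.)

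The converse direction, however, has a genuine gap \emph{relative to the definition printed in the paper}. Your ear lemma requires both endpoints of the attached odd path to lie in a single connected elementary graph, and your induction never addresses a path ear whose endpoints lie in two \emph{different} components of $P_1\cup\dots\cup P_{i-1}$. The paper's definition allows exactly this: condition (3) permits cycle ears disjoint from everything built so far, and condition (2) constrains only the layers of a path's endpoints, not their components. This case cannot be repaired, because the implication is then false. Concretely, take two disjoint $4$-cycles and one edge joining a node of the first to an opposite-layer node of the second; the sequence (first cycle, second cycle, connecting edge) is an ear decomposition of this connected graph in the printed sense, yet the graph is not elementary, since the bridge lies in no perfect matching. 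So the ``if'' direction holds only under the Lov\'asz--Plummer convention that each path ear attaches to a single component of the previously built graph (equivalently, that the intermediate stages within a component are connected), which is the convention your induction silently uses. To make your proof complete you must state this hypothesis explicitly --- it is a flaw of the paper's formulation of the definition, but as written your induction claims more than it proves.
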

By a simple counting argument one can show that the number of ears is independent of the chosen ear
decomposition. If $n$ is the number of vertices in each layer, $m$ the number of edges, and $k$ the
number of connected components, then the graph has $r=m-2n+k+1$ ears.  The ear decomposition also
guarantees that an elementary graph is $2$-connected and any node has degree at least $2$.

Let $\FG(n)$ be the set of face graphs where each layer has $n$ vertices. By the above there is a
bijection between faces of $\birkhoff$ and elements of $\FG(n)$. Let $\gr(F)$ be the face graph
corresponding to the face $F$. For faces $E, F$ of $\birkhoff$ the face $E$ is a face of $F$ if and
only if $\gr(E)$ is a subgraph of $\gr(F)$. Hence, we can read off the face lattice of a face $F$
from its graph $\gr(F)$. Two graphs that define isomorphic lattices are said to have the same
\emph{combinatorial type}.

For a face graph $G$ with upper layer $U$ and any set $S\subseteq U$ we have $|\neigh(S)|>|S|$
unless $S\cup\neigh(S)$ is the vertex set of a connected component of $G$ (otherwise, any edge with
one end in $\neigh(S)$ and one not in $S$ could never be part of a perfect matching). Hence, if $G$
is a face graph then any graph obtained by adding edges without reducing the number of connected
components is again a face graph.

Edges of the face correspond to unions of two perfect matchings $M_1, M_2$ that do not imply any
other perfect matchings. Hence, for an edge, $M_1\cup M_2$ is a disjoint union of edges and a single
cycle.

To study face graphs and the faces of $\birkhoff$ they define it is sometimes convenient to consider
a more general representation of a face as a graph.  A multi-graph $\ov G$ is a graph where more
than one edge between two nodes is allowed. As for simple graphs, a matching in a multi-graph is a
selection of edges such that no vertex is incident to more than one edge. It is perfect if every
node is incident to precisely one edge. Again, we can define the associated lattice of face
multi-graphs and their combinatorial types.

In a face graph we can replace any edge with a path of length $3$ without changing the number of
perfect matchings and their inclusion relation. For a face multi-graph $\ov G$ we define its
\emph{resolution} $\res(\ov G)$ to be the graph obtained from $\ov G$ by replacing all but one edge
between any pair of nodes by a path of length $3$. See \autoref{fig:face-multi} for an example.

\begin{figure}[tb] 
  \centering
  \subfigure[]{%
    \begin{tikzpicture}[line join=round,scale=1.1]
      \draw[line width=1pt]  (1,1) to [bend left] (1,0);
      \draw[line width=1pt]  (1,1) to [bend right] (1,0);
      \draw[line width=1pt]  (2,1) to (3,0);
      \draw[line width=1pt]  (3,1) to (2,0);
      \draw[line width=1pt]  (1,1) to (2,0);
      \draw[line width=1pt]  (2,1) to (2,0);
      \draw[line width=1pt]  (2,1) to (1,0);
      \draw[line width=1pt]  (2,1) to [bend left] (3,0);
      \draw[line width=1pt]  (2,1) to [bend right] (3,0);
      \draw[line width=1pt]  (3,1) to (3,0);
      \draw[color=white,fill=red,line width=2pt] (1,1) circle (4pt);
      \draw[color=white,fill=red,line width=2pt] (2,1) circle (4pt);
      \draw[color=white,fill=red,line width=2pt] (3,1) circle (4pt);
      \draw[color=white,fill=red,line width=2pt] (1,0) circle (4pt);
      \draw[color=white,fill=red,line width=2pt] (2,0) circle (4pt);
      \draw[color=white,fill=red,line width=2pt] (3,0) circle (4pt);
    \end{tikzpicture}
  }%
  \hspace*{1cm}
  \subfigure[]{%
    \begin{tikzpicture}[line join=round,scale=1.1]
      \draw[line width=1pt]  (1,1) to [bend right] (1,0);
      \draw[line width=1pt]  (3,1) to (2,0);
      \draw[line width=1pt]  (2,1) to [bend right] (3,0);
      \draw[color=white,fill=red,line width=2pt] (1,1) circle (4pt);
      \draw[color=white,fill=red,line width=2pt] (2,1) circle (4pt);
      \draw[color=white,fill=red,line width=2pt] (3,1) circle (4pt);
      \draw[color=white,fill=red,line width=2pt] (1,0) circle (4pt);
      \draw[color=white,fill=red,line width=2pt] (2,0) circle (4pt);
      \draw[color=white,fill=red,line width=2pt] (3,0) circle (4pt);
    \end{tikzpicture}
  }%

  \subfigure[]{%
    \begin{tikzpicture}[line join=round,scale=1.1]
      \draw[line width=1pt]  (0,1) to (0,0);
      \draw[line width=1pt]  (0,1) to (1,0);
      \draw[line width=1pt]  (1,1) to (0,0);
      
      \draw[line width=1pt]  (1,1) to (1,0);
      \draw[line width=1pt]  (3,1) to (2,0);
      \draw[line width=1pt]  (1,1) to (2,0);
      \draw[line width=1pt]  (2,1) to (2,0);
      \draw[line width=1pt]  (2,1) to (1,0);
      \draw[line width=1pt]  (2,1) to (3,0);
      \draw[line width=1pt]  (3,1) to (3,0);
      
      \draw[line width=1pt]  (2,1) to (4,0);
      \draw[line width=1pt]  (4,1) to (4,0);
      \draw[line width=1pt]  (4,1) to (3,0);
      
      \draw[line width=1pt]  (2,1) to (5,0);
      \draw[line width=1pt]  (5,1) to (5,0);
      \draw[line width=1pt]  (5,1) to (3,0);     

      \draw[color=white,fill=red,line width=2pt] (0,1) circle (4pt);
      \draw[color=white,fill=red,line width=2pt] (1,1) circle (4pt);
      \draw[color=white,fill=red,line width=2pt] (2,1) circle (4pt);
      \draw[color=white,fill=red,line width=2pt] (3,1) circle (4pt);
      \draw[color=white,fill=red,line width=2pt] (4,1) circle (4pt);
      \draw[color=white,fill=red,line width=2pt] (5,1) circle (4pt);
      
      \draw[color=white,fill=red,line width=2pt] (0,0) circle (4pt);
      \draw[color=white,fill=red,line width=2pt] (1,0) circle (4pt);
      \draw[color=white,fill=red,line width=2pt] (2,0) circle (4pt);
      \draw[color=white,fill=red,line width=2pt] (3,0) circle (4pt);
      \draw[color=white,fill=red,line width=2pt] (4,0) circle (4pt);
      \draw[color=white,fill=red,line width=2pt] (5,0) circle (4pt);
    \end{tikzpicture}
  }%
  \caption{A face multi-graph $\ov G$, a perfect matching in $\ov G$,
    and the resolution $\res(\ov G)$ of $G$.}
  \label{fig:face-multi}
\end{figure}
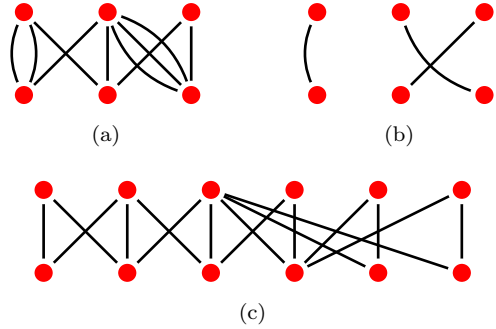
We can also define a converse operation. Let $\gr(F)$ be a face graph. For a vertex $v$ of degree
$2$ we introduce the \emph{reduction} $\red(\gr(F),v)$ at a vertex $v$.  Let $F$ be a face graph
with a vertex $v$ of degree $2$ and neighbors $u_1$, $u_2$. The reduction $\red(\gr(F),v)$ of $F$ at
$v$ is the graph obtained from $\gr(F)$ by contracting $v$. This graph may have double edges. %
See \autoref{fig:contraction} for an illustration. %
This construction already appears in the paper of Billera and Sarangarajan~\cite{BS96}.  We
summarize the properties of $\gr(F)$ and a face multi-graph $\ov G$.
\begin{proposition}\label{prop:isomgraphs}
  Let $F$ be a face of some Birkhoff polytope, and $\ov G$ a face multi graph corresponding to
  $F$. Then
  \begin{enumerate}
  \item $\gr(F)$ and its reduction $\red(\gr(F))$ have the same combinatorial type,
  \item $\ov G$ and its resolution $\res(\ov G)$ have the same combinatorial type.\qed
  \end{enumerate}
\end{proposition}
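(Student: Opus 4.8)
The plan is to reduce both statements to a single local operation together with its inverse, and to prove that operation preserves the combinatorial type by exhibiting an explicit isomorphism of face lattices. The key observation is that the two parts are the two directions of the same move: the resolution $\res(\ov G)$ is obtained from $\ov G$ by repeatedly replacing one edge of a parallel pair by a path of length three, while the reduction $\red(\gr(F),v)$ at a degree-$2$ node $v$ is exactly the inverse, since if $v$ lies in the upper layer with neighbors $u_1,u_2$, then deleting $v$ and merging $u_1,u_2$ is undone by splitting the merged node apart and reinserting $v$ as the midpoint of a length-three path. Thus it suffices to prove the following local claim: if a face graph $G$ contains an edge $e=ab$ (with $a$ upper, $b$ lower) and $G'$ is obtained by deleting $e$ and inserting a path $a-b'-a'-b$ through two new nodes $a'$ (upper) and $b'$ (lower), then $G$ and $G'$ have the same combinatorial type. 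This is the precise form of the remark, made earlier in the excerpt, that one may replace an edge by a path of length three without changing the perfect matchings or their inclusion relation.

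First I would set up the bijection $\psi$ on perfect matchings. Every perfect matching of $G'$ must saturate the two degree-$2$ nodes $a',b'$ in exactly one of two ways: either through the two outer edges $ab'$ and $a'b$, or through the middle edge $b'a'$. In the first case $a$ and $b$ are covered by the path, so deleting $a',b'$ and restoring $e$ gives a perfect matching of $G$ using $e$; in the second case $a$ and $b$ are matched through their original edges, so deleting $a',b'$ gives a perfect matching of $G$ avoiding $e$. This assignment is manifestly reversible, so $\psi$ is a bijection, under which matchings of $G$ through $e$ correspond to matchings of $G'$ through $\{ab',a'b\}$ and matchings avoiding $e$ correspond to matchings through $b'a'$.

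Next I would promote $\psi$ to a lattice isomorphism. Since the faces of the polytope defined by a face graph correspond exactly to its elementary spanning subgraphs ordered by inclusion (a spanning subgraph being elementary precisely when each of its edges lies in one of its perfect matchings), it is enough to produce an inclusion-preserving bijection $\Phi$ between the elementary spanning subgraphs of $G$ and those of $G'$ that induces $\psi$ on matchings. For an elementary subgraph $H$ of $G$ I would define $\Phi(H)$ by keeping every edge other than $e$ unchanged, replacing $e$ by the outer edges $ab',a'b$ whenever some matching of $H$ uses $e$, and adjoining the middle edge $b'a'$ whenever some matching of $H$ avoids $e$; the new degree-$2$ nodes are then saturated in every matching of $\Phi(H)$. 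One then checks that $\Phi$ sends elementary subgraphs to elementary subgraphs, that it is monotone with monotone inverse, and that it realizes $\psi$ on the level of vertices, so it is the desired isomorphism of face lattices.

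Finally I would assemble the two statements. For part (2), the extra parallel edges of $\ov G$ are resolved one at a time through disjoint pairs of new nodes, so the local moves are independent and their isomorphisms compose to give $\ov G\cong\res(\ov G)$; combined with \autoref{prop:isomgraphs} this also justifies working with multi-graphs throughout. For part (1), the reduction at a degree-$2$ node $v$ is the inverse local move, with $v$ playing the role of the path midpoint, so the local claim applies directly and the resulting double edges simply record the two possible partners of $v$. The hard part will be the third paragraph: verifying that $\Phi$ is well defined and monotone in both directions — in particular that the presence or absence of the middle edge $b'a'$ is governed \emph{exactly} by whether $H$ admits a matching avoiding $e$, and that no spurious matchings appear in $G'$. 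This is the point where the multi-graph bookkeeping (which parallel edge, equivalently which traversal of the inserted path, is used) must be reconciled with the inclusion relation that defines the face lattice.
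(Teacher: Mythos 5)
The paper offers no argument at all for this proposition: it is stated with a terminal \qed, resting on the earlier unproved remark that replacing an edge by a path of length $3$ changes neither the perfect matchings nor their inclusion relations (a construction credited to Billera--Sarangarajan). Your proposal supplies exactly the argument the paper leaves implicit, and it is correct. Your reduction of both parts to one invertible local move, the matching bijection $\psi$, and its promotion to a lattice isomorphism $\Phi$ on elementary spanning subgraphs is the right formalization; the step you defer as ``the hard part'' does go through. Concretely: if $K$ is an elementary spanning subgraph of $G'$, elementarity forces the path edges of $K$ to form one of only three configurations (middle edge only, both outer edges only, or all three), since a lone outer edge leaves $a'$ or $b'$ uncoverable or creates an edge in no matching. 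Moreover, in the case where $K$ contains both outer edges but not the middle edge, every matching of $K$ must use both outer edges (each is the unique edge of $K$ at $a'$ resp.\ $b'$), so elementarity forces $a$ and $b$ to have degree $1$ in $K$; hence no matching of $\Phi^{-1}(K)$ can avoid $e$, which is precisely the ``no spurious middle edge'' condition you need for $\Phi\circ\Phi^{-1}=\mathrm{id}$. Monotonicity in both directions is then immediate since membership of the outer/middle edges is determined by existence of matchings, which is inherited under inclusion of spanning elementary subgraphs. Two small points to tidy up: your assembly of part (2) cites the proposition being proved where you mean the local claim (a harmless slip, but circular as written), and the local claim should be stated for multi-graphs from the start, which costs nothing since $\psi$ and $\Phi$ never interact with a parallel partner of $e$.
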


In the next sections we want to study combinatorial types of faces of $\birkhoff$ by studying their
face graphs. The following proposition tells us that we can mostly restrict our attention to
connected face graphs.
\begin{proposition}
  Let $G$ be a face graph with connected components $G_1, \ldots, G_k$, $k\ge 2$. Then $G_1, \ldots,
  G_k$ are face graphs and the face corresponding to $G$ is the product of the faces corresponding
  to $G_1, \ldots, G_k$.
\end{proposition}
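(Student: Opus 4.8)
The plan is to first show that each component $G_i$ is itself a face graph, and then to identify the face $F$ corresponding to $G$ with the geometric product of the faces $F_i$ corresponding to the $G_i$. For the first part, recall that any perfect matching $M$ of $G$ uses only edges of $G$, and since no edge joins two different components, $M$ splits as a disjoint union $M = M_1\cup\dots\cup M_k$ with $M_i := M\cap G_i$. As $M$ matches every node, each $M_i$ must be a perfect matching of $G_i$; in particular the upper and lower layers of $G_i$ have the same size, say $n_i$, and $\sum_i n_i = n$. Now if $e$ is any edge of $G_i$, then since $G$ is elementary $e$ lies in some perfect matching $M$ of $G$, and $M\cap G_i$ is a perfect matching of $G_i$ containing $e$. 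Hence every edge of $G_i$ lies in a perfect matching of $G_i$, so $G_i$ is an elementary bipartite graph with equal layers, \emph{i.e.}, a face graph in $\FG(n_i)$. Let $F_i := \face(G_i)$ be the corresponding face of $\birkhoff[n_i]$.

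For the product statement I would argue geometrically. After reordering rows and columns so that the node sets of $G_1,\dots,G_k$ occupy consecutive index blocks $I_1,\dots,I_k$ with $|I_i| = n_i$, every perfect matching of $G$ is, by the decomposition above, a permutation preserving each block $I_i$, so its permutation matrix is block diagonal with $i$-th diagonal block $M(\sigma_i)$ for a perfect matching $\sigma_i$ of $G_i$. Thus every vertex of $F = \face(G)$ lies in the coordinate subspace $W$ of block-diagonal matrices, which I identify with $\R^{n_1\times n_1}\times\dots\times\R^{n_k\times n_k}$. Under this identification the vertex set of $F$ is exactly the set of tuples $(M(\sigma_1),\dots,M(\sigma_k))$ with each $\sigma_i$ ranging independently over the perfect matchings of $G_i$, \emph{i.e.}, over $\vertices(F_i)$, because a tuple of perfect matchings of the components is the same thing as a single perfect matching of $G$. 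Since $F$ is the convex hull of these vertices, $F = \conv\{(v_1,\dots,v_k)\mid v_i\in\vertices(F_i)\}$, which by definition is the geometric product $F_1\times\dots\times F_k$. Hence $F$ is a product of $F_1,\dots,F_k$ in the sense defined above.

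A purely combinatorial variant, closer to the language of the rest of the paper, is also available: using that $E$ is a face of $F$ iff $\gr(E)$ is a subgraph of $\gr(F)$, one checks that $H\mapsto(H\cap G_1,\dots,H\cap G_k)$ is an inclusion-preserving bijection between the spanning subgraphs of $G$ that are face graphs and the tuples of spanning subgraphs of the $G_i$ that are face graphs, with inverse given by disjoint union. The only content is that a spanning subgraph $H$ of $G$ is elementary if and only if each $H\cap G_i$ is, which follows from the same restrict/combine argument as above (restrict a perfect matching of $H$ to get one in each $H\cap G_i$; conversely take a union of perfect matchings, one through each prescribed edge, to realize any given edge). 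This poset isomorphism between the nonempty faces, together with the shared empty face, is exactly the assertion $\lattice(F)\cong\lattice(F_1)\times\dots\times\lattice(F_k)$.

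The one point that needs care is the bookkeeping around improper faces: the face lattice of a geometric product is not literally the direct product of the factor lattices, since the empty faces must be glued to a single common minimum (equivalently, the nonempty faces of the product are precisely the products of nonempty faces of the factors). Verifying that the correspondence above respects this convention, and that ``spanning'' subgraphs are the right objects so that a perfect matching survives in each component, is the main, if routine, obstacle; the geometric argument largely sidesteps it by appealing directly to the definition of the geometric product.
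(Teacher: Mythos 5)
Your proposal is correct and follows essentially the same route as the paper: restrict a perfect matching of $G$ to each component to see that every $G_i$ is a face graph, then use the bijection between perfect matchings of $G$ and tuples of perfect matchings of the $G_i$ (equivalently, vertices of the $F_i$) to identify $\face(G)$ with the product. Your block-diagonal identification and the remark on gluing the empty faces merely spell out details that the paper's two-line proof leaves implicit.
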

\begin{proof}
  Any perfect matching in $G$ induces a perfect matching in $G_i$, $1\le i\le k$. Hence, any edge in
  $G_i$ is contained in a perfect matching of $G_i$, so all $G_i$ are face graphs. Perfect matchings
  correspond to vertices, and any combination of perfect matchings in the $G_i$ defines a perfect
  matching of $G$. Hence, the face of $G$ is a product.
\end{proof}
In particular, prisms over faces are again faces, and their face graph is obtained by adding a
disjoint cycle of length $4$.
\begin{figure}[tb]
  \centering
  \subfigure[The graph $G$]{%
    \begin{tikzpicture}[scale=1.1]
      
      \coordinate (u1) at ( 0,1);
      \coordinate (u2) at ( 1,1);
      \coordinate (u3) at ( 2,1);
      \coordinate (u4) at ( 3,1);
      \coordinate (v1) at ( 0,0);
      \coordinate (v2) at ( 1,0);
      \coordinate (v3) at ( 2,0);
      \coordinate (v4) at ( 3,0);

      \draw[line width=1pt]  (u1) -- (v3);
      \draw[line width=1pt]  (u2) -- (v2);
      \draw[line width=1pt]  (u2) -- (v3);
      \draw[line width=1pt]  (u2) -- (v4);
      \draw[line width=1pt]  (u3) -- (v3);
      \draw[line width=1pt]  (u3) -- (v4);
      \draw[line width=1pt]  (u4) -- (v3);
      \draw[line width=1pt]  (u4) -- (v4);
      \draw[line width=1pt]  (u1) -- (v2);
      \draw[line width=1pt]  (u1) -- (v1);
      \draw[line width=1pt]  (u2) -- (v1);
      \draw[color=white,fill=red,line width=2pt] (u1) circle (4pt);
      \draw[color=white,fill=red,line width=2pt] (u2) circle (4pt);
      \draw[color=white,fill=red,line width=2pt] (u3) circle (4pt);
      \draw[color=white,fill=red,line width=2pt] (u4) circle (4pt);
      \draw[color=white,fill=red,line width=2pt] (v1) circle (4pt);
      \draw[color=white,fill=red,line width=2pt] (v2) circle (4pt);
      \draw[color=white,fill=red,line width=2pt] (v3) circle (4pt);
      \draw[color=white,fill=red,line width=2pt] (v4) circle (4pt);
      
      \node [above,yshift=3pt] at (u2) {$v_1$};
      \node [above,yshift=3pt] at (u3) {$v_2$};
      \node [above,yshift=3pt] at (u4) {$v$};
      \node [below,yshift=-3pt] at (v2) {$u_1$};
      \node [below,yshift=-3pt] at (v3) {$u_2$};
    \end{tikzpicture}
  }%
  \hspace*{.3cm}
  \subfigure[The graph $\red(G,v)$]{
\begin{tikzpicture}[scale=1.1]
  
  \coordinate (u1) at ( 0,1);
  \coordinate (u2) at ( 1,1);
  \coordinate (u3) at ( 2,1);
  \coordinate (u4) at ( 3,1);
  \coordinate (v1) at ( 0,0);
  \coordinate (v2) at ( 1,0);
  \coordinate (v3) at ( 2,0);
  \coordinate (v4) at ( 3,0);

  \draw[line width=1pt]  (u1) -- (v3);
  \draw[line width=1pt]  (u2) -- (v2);
  \draw[line width=1pt]  (u2) -- (v3);
  \draw[line width=1pt]  (u3) -- (v3);
  \draw[line width=1pt]   (u3) to [bend left=30] (v3);
  \draw[line width=1pt]   (u2) to [bend left=30] (v3);
  \draw[line width=1pt]  (u1) -- (v2);
  \draw[line width=1pt]  (u1) -- (v1);
  \draw[line width=1pt]  (u2) -- (v1);
  \draw[color=white,fill=red,line width=2pt] (u1) circle (4pt);
  \draw[color=white,fill=red,line width=2pt] (u2) circle (4pt);
  \draw[color=white,fill=red,line width=2pt] (u3) circle (4pt);
  \draw[color=white,fill=red,line width=2pt] (v1) circle (4pt);
  \draw[color=white,fill=red,line width=2pt] (v2) circle (4pt);
  \draw[color=white,fill=red,line width=2pt] (v3) circle (4pt);
  \draw[color=white,fill=red,line width=2pt] (-1,0) circle (0pt);

  \node [above,yshift=3pt] at (u2) {$v_1$};
  \node [above,yshift=3pt] at (u3) {$v_2$};
  \node [below,yshift=-3pt] at (v2) {$u$};
\end{tikzpicture}
  }
  \hspace*{.8cm}
  \subfigure[The graph $\res(\red(G,v))$]{
\begin{tikzpicture}[scale=1.1]
  
  \coordinate (u1) at ( 0,1);
  \coordinate (u2) at ( 1,1);
  \coordinate (u3) at ( 2,1);
  \coordinate (u4) at ( 3,1);
  \coordinate (u5) at ( 4,1);
  \coordinate (v1) at ( 0,0);
  \coordinate (v2) at ( 1,0);
  \coordinate (v3) at ( 2,0);
  \coordinate (v4) at ( 3,0);
  \coordinate (v5) at ( 4,0);

  \draw[line width=1pt]  (u1) -- (v3);
  \draw[line width=1pt]  (u2) -- (v2);
  \draw[line width=1pt]  (u2) -- (v3);
  \draw[line width=1pt]  (u2) -- (v4);
  \draw[line width=1pt]  (u3) -- (v3);
  \draw[line width=1pt]  (u3) -- (v5);
  \draw[line width=1pt]  (u4) -- (v4);
  \draw[line width=1pt]  (u5) -- (v5);
  \draw[line width=1pt]  (u5) -- (v3);
  \draw[line width=1pt]  (u4) -- (v3);
  \draw[line width=1pt]  (u4) -- (v4);
  \draw[line width=1pt]  (u1) -- (v2);
  \draw[line width=1pt]  (u1) -- (v1);
  \draw[line width=1pt]  (u2) -- (v1);
  \draw[color=white,fill=red,line width=2pt] (u1) circle (4pt);
  \draw[color=white,fill=red,line width=2pt] (u2) circle (4pt);
  \draw[color=white,fill=red,line width=2pt] (u3) circle (4pt);
  \draw[color=white,fill=red,line width=2pt] (u4) circle (4pt);
  \draw[color=white,fill=red,line width=2pt] (u5) circle (4pt);
  \draw[color=white,fill=red,line width=2pt] (v1) circle (4pt);
  \draw[color=white,fill=red,line width=2pt] (v2) circle (4pt);
  \draw[color=white,fill=red,line width=2pt] (v3) circle (4pt);
  \draw[color=white,fill=red,line width=2pt] (v4) circle (4pt);
  \draw[color=white,fill=red,line width=2pt] (v5) circle (4pt);

  \node [above,yshift=3pt] at (u2) {$v_1$};
  \node [above,yshift=3pt] at (u3) {$v_2$};
  \node [below,yshift=-3pt] at (v2) {$u$};
\end{tikzpicture}
  }
  \caption{Contracting a vertex of degree $2$}
  \label{fig:contraction}
\end{figure}
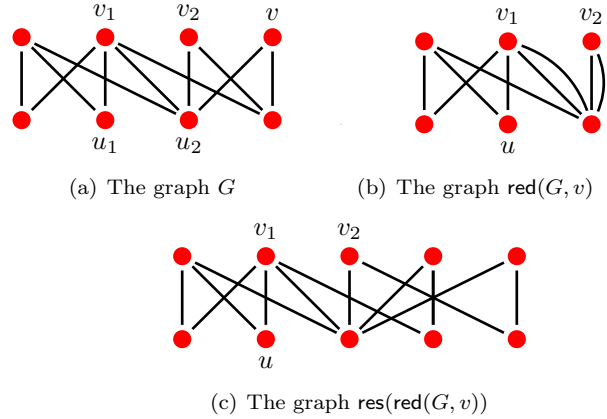

The converse statement, \emph{i.e.}, that the face graph of a face that combinatorially is a product
of two other faces is always disconnected, follows, \emph{e.g.}, from \cite[Cor.~4.7]{BG77-2}, where
they prove that the (vertex-edge) graph of two faces is isomorphic if and only if the two face
graphs have the same number of components and there is a correspondence between the components such
that each pair has isomorphic graphs.

We can read off the dimension of $\face(\Sigma)$ for $\Sigma\subseteq S_n$ from the graph
$\gr(\Sigma)$. Assume first that the face graph is connected with $n$ nodes in each layer and $m$
edges. Then the Birkhoff-von Neumann Theorem implies that the dimension $d$ is at most $m-2n+1$. On
the other hand, the graph has an ear decomposition with $m-2n+2$ ears, so we have at least $m-2n+2$
linearly independent vertices in the face. So $d=m-2n+1$ for a connected face graph. Using that
disconnected graphs define products we obtain
\begin{align}\label{eq:dimension}
  \dim \face(\Sigma) = m-2n+k\,,
\end{align}
where $k$ is the number of connected components of the graph.  The following theorem of Billera and
Sarangarajan tells us that we can restrict the search for combinatorial types of $d$-dimensional
faces to $\birkhoff$ for $n\le 2d$. We repeat the simple proof, as it is quite instructive.
\begin{theorem}[Billera and Sarangarajan~\cite{BS96}]\label{thm:billera}
  Any $d$-dimensional combinatorial type of face already appears in $\birkhoff[2d]$.
\end{theorem}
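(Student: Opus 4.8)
The plan is to first reduce to the case of a connected face graph, and then to shrink that graph as far as possible by contracting degree-$2$ vertices, keeping track of the size of the simple graph one finally needs to realize the type.

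\textbf{Reduction to the connected case.} Since a disconnected face graph realizes the product of the types of its components (the proposition preceding this theorem), and a product of faces living in $\birkhoff[n_1],\dots,\birkhoff[n_k]$ is realized by the disjoint union of their graphs inside $\birkhoff[n_1+\dots+n_k]$, it suffices to realize each connected type of dimension $d_i$ with at most $2d_i$ nodes per layer; summing over the components and using $\sum_i d_i=d$ then gives the bound. So I assume from now on that $G$ is connected of dimension $d$, with $n$ nodes per layer and $m$ edges, so that \eqref{eq:dimension} reads $d=m-2n+1$.

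\textbf{Shrinking the graph.} I repeatedly apply the reduction $\red(G,v)$ at a vertex $v$ of degree $2$. By \autoref{prop:isomgraphs} this preserves the combinatorial type, and it deletes $v$ while merging its two neighbours, hence removes exactly one node from each layer. In a connected graph a degree-$2$ vertex fails to have two distinct neighbours only when the whole graph is a digon (two nodes joined by a double edge); this happens precisely when $d=1$, and there the resolution is the $4$-cycle with $2=2d$ nodes per layer. Otherwise I keep reducing until the resulting connected face multigraph $\ov G$ has every vertex of degree at least $3$. Let $\ov G$ have $n'$ nodes per layer and $m'$ edges counted with multiplicity. Replacing an edge by a path of length three does not change $m-2n$, so \eqref{eq:dimension} still gives $d=m'-2n'+1$; meanwhile every vertex having degree at least $3$ forces $m'\ge 3n'$, whence $n'\le d-1$.

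\textbf{Restoring a simple face graph.} I now pass to the simple face graph $\res(\ov G)$, which by \autoref{prop:isomgraphs} has the same type as $\ov G$, hence as $G$. If $e$ denotes the number of pairs of nodes joined by at least one edge in $\ov G$, the resolution turns the $m'-e$ superfluous parallel edges into paths of length three, adding $m'-e$ nodes to each layer. The underlying simple graph is connected on $2n'$ nodes, so $e\ge 2n'-1$, and therefore the number of reinserted nodes is $m'-e\le (m'-2n')+1=d$. Consequently $\res(\ov G)$ has at most $n'+d\le 2d-1$ nodes per layer, so the type appears in $\birkhoff[k]$ for some $k\le 2d-1$, and a fortiori in $\birkhoff[2d]$ (any type with an edge lifts to one more node per layer by expanding a single edge into a path of length three).

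\textbf{Main obstacle.} The heart of the argument is the tension between the two moves: contracting degree-$2$ vertices is what forces the graph to become small ($n'\le d-1$), but it produces a multigraph, and restoring a genuine simple face graph through $\res$ puts nodes back. The step needing the most care is bounding this re-expansion, and the inequality $e\ge 2n'-1$ from connectivity of the underlying simple graph is exactly what keeps the number of reinserted nodes at most $d$; one must also separately dispose of the degenerate digon ($d=1$), where not every degree exceeds two and the bound $2d$ is attained rather than $2d-1$. It is worth noting that equality $n=2d$ forces every component to be $1$-dimensional, i.e.\ a product of segments, which already foreshadows the characterization of the $d$-cube as the unique type of Birkhoff dimension $2d$.
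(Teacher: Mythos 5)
Your proof is correct and takes essentially the same route as the paper's: contract degree-$2$ vertices to reach a connected face multigraph with at most $d-1$ nodes per layer, then resolve the at most $d$ superfluous parallel edges into paths of length three, giving at most $2d-1$ nodes per layer. The only cosmetic differences are that you bound the number of parallel edges by connectivity of the underlying simple graph ($e\ge 2n'-1$) where the paper uses nonnegativity of the dimension of the face of that simple graph (the same inequality in disguise), and that you treat the $d=1$ digon case explicitly.
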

\begin{proof}
  We assume first that the face graph $G$ is connected. Then $G$ has $m=2n+(d-1)$ edges. Let $k_2$
  be the number of nodes of degree $2$. A node in $G$ has degree at least $2$, so
  \begin{align*}
    k_2\ \ge\ 2n-2(d-1)\,.
  \end{align*}
  We now successively contract all nodes of degree $2$ using the above reduction. We obtain a face
  multi-graph $G'$ on $2n' = 2n - k_2 \ \le \ 2(d-1)$ nodes, \emph{i.e.}, at most $d-1$ on each
  layer.  Note that one reduction step can remove more than one node of degree $2$. Let $m'$ be the
  number of edges of $G'$. $G'$ defines a face combinatorially equivalent to the one of $G$, in
  particular, its dimension is $d=m'-2n'+1$. The graph $G'$ may have multiple edges. Let $e'$ be the
  minimal number of edges we have to remove to obtain a simple graph $\ov G$. Then $\ov G$ is
  connected and a face graph corresponding to a face of dimension
  \begin{align*}
    0\ \le \ \ov d\ =\ m'-2n'+1-e' \ =\ d-e'\,.
  \end{align*}
  Thus, $e'\le d$, and we can resolve each multiple edge to obtain a face graph $\tilde G$ with at
  most $2(d-1)+2d\le 2(2d-1)\le 4d$ nodes. Hence, $\tilde G$ defines a face of $\birkhoff$ that is
  combinatorially isomorphic to the one of $G$.

  The statement for disconnected graphs follows using induction by
  replacing the graph in each component with the above procedure.
\end{proof}

\section{Irreducibility}
\label{sec:irred}

In general, a combinatorial type of a face can occur many times as a
geometrically realized face of $\birkhoff$. Hence, there are many
different possibilities to represent a combinatorial type of a face as
an face graph.  Brualdi and Gibson \cite[Conj.\ 1]{BG77-3} conjecture
that any two combinatorially isomorphic faces are affinely equivalent,
but as far as we know this is still open.

Let $G$ be a face graph. In the following, we want to examine some
version of \emph{minimality} for such a representation. This will,
however, not lead to a unique ``standard'' representation.  We say
that a node $v$ in $G$ is \emph{reducible}, if $v$ has degree $2$ in
$G$ and the common neighborhood of the two vertices adjacent to $v$
only contains the node $v$.  $v$ is called \emph{irreducible}
otherwise.  A face graph $G$ is called \emph{irreducible}, if all its
nodes are irreducible, and \emph{reducible} otherwise.  An irreducible
representation of a certain $d$-face of a Birkhoff polytope need not
be unique. \autoref{fig:NumberNodesnotUnique} shows two irreducible
representations of the $4$-simplex on a different number of nodes.
See \autoref{fig:RedElementary} for an example of a reducible node.
By \autoref{prop:isomgraphs} the face graphs $G$ and
$G':=\red(G,v)$, for any node $v$ of $G$, define the same
combinatorial type. Hence, we can mostly restrict our considerations
to irreducible face graphs.

\begin{figure}[bt]
  \centering
  \begin{tikzpicture}[scale=1.1]
    \foreach \x in {0,1,2,3} {
      \coordinate (u\x) at (\x,1);
      \coordinate (v\x) at (\x,0);
    }
      
    \draw [line width=1pt] (u0) -- (v0);
    \draw [line width=1pt] (u0) -- (v1);
    \draw [line width=1pt] (u0) -- (v2);
    \draw [line width=1pt] (u1) -- (v1);
    \draw [line width=1pt] (u1) -- (v2);
    \draw [line width=1pt] (u2) -- (v0);
    \draw [line width=1pt] (u2) -- (v1);
    \draw [line width=1pt] (u2) -- (v2);
    \draw [line width=1pt] (u2) -- (v3);
    \draw [line width=1pt] (u3) -- (v2);
    \draw [line width=1pt] (u3) -- (v3);

    \foreach \x in {0,1,2,3} {
      \draw [color=white,fill=red,line width=2pt] (u\x) circle (4pt);
      \draw [color=white,fill=red,line width=2pt] (v\x) circle (4pt);
    }
  \end{tikzpicture}
  \hspace{1cm}
  \begin{tikzpicture}[scale=1.1]
    \foreach \x in {0,1,2,3,4} {
      \coordinate (u\x) at (\x,1);
      \coordinate (v\x) at (\x,0);
    }
      
    \draw [line width=1pt] (u0) -- (v0);
    \draw [line width=1pt] (u0) -- (v2);
    \draw [line width=1pt] (u1) -- (v1);
    \draw [line width=1pt] (u1) -- (v2);
    \draw [line width=1pt] (u2) -- (v0);
    \draw [line width=1pt] (u2) -- (v1);
    \draw [line width=1pt] (u2) -- (v2);
    \draw [line width=1pt] (u2) -- (v3);
    \draw [line width=1pt] (u2) -- (v4);
    \draw [line width=1pt] (u3) -- (v2);
    \draw [line width=1pt] (u3) -- (v3);
    \draw [line width=1pt] (u4) -- (v2);
    \draw [line width=1pt] (u4) -- (v4);

    \foreach \x in {0,1,2,3,4} {
      \draw [color=white,fill=red,line width=2pt] (u\x) circle (4pt);
      \draw [color=white,fill=red,line width=2pt] (v\x) circle (4pt);
    }
  \end{tikzpicture}
  \caption{Two irreducible graphs both defining a $4$-simplex.}
  \label{fig:NumberNodesnotUnique}
\end{figure}
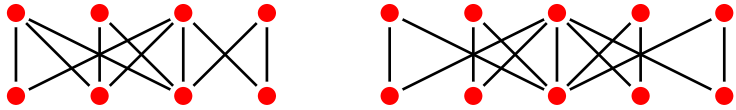
\begin{lemma}\label{lemma:partnerlemma}
  Let $G$ be a face graph, $v\in G$ an irreducible node of
  degree $2$ in $G$ and $w_1, w_2$ its neighbors.  Let
  \begin{align*}
    N:=\left(\neigh(w_1)\cap \neigh(w_2)\right)\setminus\{v\}\,.
  \end{align*}
  Then either the graph induced by $v,w_1,w_2$ and the nodes in $N$ is
  a connected component of $G$ (and then necessarily the set $N$
  contains a single node $u$) or all points $u\in N$ and at least one
  of $w_1,w_2$ have degree $\ge 3$.
\end{lemma}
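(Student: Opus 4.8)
The plan is to prove the dichotomy by assuming the first alternative fails---that is, the subgraph induced by $\{v,w_1,w_2\}\cup N$ is \emph{not} a connected component of $G$---and deducing both that every $u\in N$ has degree $\ge 3$ and that at least one of $w_1,w_2$ has degree $\ge 3$. The only tool I would use throughout is the counting inequality recalled earlier in the excerpt: for a face graph with upper layer $U$ and $S\subseteq U$ one has $|\neigh(S)|>|S|$ unless $S\cup\neigh(S)$ is the vertex set of a connected component; by symmetry the same holds with the two layers exchanged. As a preliminary I would record that, since $v$ is irreducible of degree $2$, the set $N$ is nonempty (this is precisely the negation of reducibility), and that $v$ together with every $u\in N$ lies in one layer, say the upper one, while $w_1,w_2$ lie in the other, with each node of $\{v\}\cup N$ adjacent to both $w_1$ and $w_2$.

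For the claim that each $u\in N$ has degree $\ge 3$, I argue by contraposition: suppose some $u\in N$ had degree $2$. As $u$ is adjacent to both $w_1$ and $w_2$, this forces $\neigh(u)=\{w_1,w_2\}=\neigh(v)$. Applying the inequality to $S=\{v,u\}\subseteq U$ gives $\neigh(S)=\{w_1,w_2\}$, so $|\neigh(S)|=2=|S|$; hence $S\cup\neigh(S)=\{v,u,w_1,w_2\}$ must be a connected component. Any other $u'\in N$ is adjacent to $w_1$ and therefore lies in this same component, whose upper-layer nodes are only $v$ and $u$; since $u'\ne v$ this forces $u'=u$, so $N=\{u\}$ and the induced subgraph on $\{v,w_1,w_2\}\cup N$ \emph{is} that component---the first alternative, contradicting our assumption.

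For the claim that at least one of $w_1,w_2$ has degree $\ge 3$, I again argue by contradiction and suppose both have degree $2$. Writing $\neigh(w_1)=\{v,a\}$ and $\neigh(w_2)=\{v,b\}$, the definition of $N$ gives $N=(\{v,a\}\cap\{v,b\})\setminus\{v\}$, which is empty unless $a=b$. Since $N\ne\emptyset$, we must have $a=b=:u$, so $N=\{u\}$ and $\neigh(w_1)=\neigh(w_2)=\{v,u\}$. Applying the layer-exchanged inequality to $S=\{w_1,w_2\}$ then yields $\neigh(S)=\{v,u\}$, so $|\neigh(S)|=2=|S|$ and $\{w_1,w_2,v,u\}$ is a connected component---again the first alternative, contradicting our assumption.

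Finally, the parenthetical ``$N$ contains a single node'' in the first alternative comes for free: if the induced subgraph on $\{v,w_1,w_2\}\cup N$ is a connected component of $G$, then it is itself an elementary bipartite graph, hence has equally many nodes in each layer, so $|\{v\}\cup N|=|\{w_1,w_2\}|=2$ and $|N|=1$. The proof is essentially a triple application of one inequality, so I do not anticipate a serious obstacle; the only point requiring care---and where a sloppy argument could slip---is keeping track of which layer each vertex set occupies, so that the counting inequality is always applied to a set contained in a single layer, together with the initial use of irreducibility to guarantee $N\ne\emptyset$.
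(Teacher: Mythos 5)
Your proof is correct, and it takes a genuinely different route from the paper's. The paper argues directly with perfect matchings: assuming the induced graph on $\{v,w_1,w_2\}\cup N$ is not a component, if both $w_1,w_2$ had degree $2$ then irreducibility forces a node $u\in N$ with a third neighbor $x$, and the edge $(u,x)$ lies in no perfect matching (covering $w_1$ and $w_2$ would use $v$ twice); similarly, if some $u\in N$ had degree $2$ while $w_1$ has a third neighbor $x$, the edge $(w_1,x)$ lies in no perfect matching. In both cases the contradiction is with $G$ being a face graph, i.e., with elementarity itself. You instead reuse the Hall-type counting fact recorded earlier in the paper ($|\neigh(S)|>|S|$ unless $S\cup\neigh(S)$ spans a connected component), applied once to $S=\{v,u\}$ in the layer of $v$ and once to $S=\{w_1,w_2\}$ in the other layer, and you show that negating either degree condition forces the first alternative of the dichotomy outright -- a slightly cleaner logical shape, since $\neg(\text{B})$ yields $(\text{A})$ rather than a contradiction with an ambient hypothesis. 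The two arguments are cousins (the counting inequality is itself proved by exhibiting an edge excluded from all perfect matchings), but yours trades the explicit matching obstructions for a single quotable lemma, and as a bonus it actually proves the parenthetical claim $|N|=1$ in the component case (via equal layer sizes of components of elementary graphs), which the paper's proof leaves implicit. The one point where care was needed -- that the counting fact, stated for the upper layer, applies to both layers by symmetry, and that in your applications $S\cup\neigh(S)$ induces a connected $4$-cycle so "union of components" really is a single component -- you handle correctly.
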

\begin{figure}[t]
  \centering
  \begin{tikzpicture}[scale=1.1]
    \foreach \x in {0,1,2,3} {
      \coordinate (u\x) at (\x,1);
      \coordinate (v\x) at (\x,0);
    }
      
    \draw [line width=1pt] (u0) -- (v0);
    \draw [line width=1pt] (u0) -- (v1);
    \draw [line width=1pt] (u0) -- (v3);
    \draw [line width=1pt] (u1) -- (v0);
    \draw [line width=1pt] (u1) -- (v1);
    \draw [line width=1pt] (u1) -- (v3);
    \draw [line width=2pt,dashed] (u2) -- (v1);
    \draw [line width=2pt,dashed] (u2) -- (v2);
    \draw [line width=1pt] (u3) -- (v0);
    \draw [line width=1pt] (u3) -- (v2);
    \draw [line width=1pt] (u3) -- (v3);

    \foreach \x in {0,1,2,3} {
      \draw [color=white,fill=red,line width=2pt] (u\x) circle (4pt);
      \draw [color=white,fill=red,line width=2pt] (v\x) circle (4pt);
    }
    \draw [color=white,fill=red,line width=2pt] (u2) circle (4pt) node [above,color=black,yshift=5pt] {$u$};
  \end{tikzpicture}
  \caption{A reducible face graph: The node $u$ incident to the
    two thick dashed edges is reducible.}
  \label{fig:RedElementary}
\end{figure}
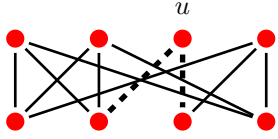
\begin{proof}
  We prove this by contradiction. We are done, if $v,w_1,w_2$ and the nodes in $N$ form a connected
  component. Hence assume this is not the case.

  Assume first that $w_1,w_2$ both have degree $2$.  Then there is some node $u\in N$ that has a
  third neighbor $x$ different from $w_1,w_2$.  However, the edge $(u,x)$ must be contained in a
  perfect matching $M$ of the graph.  This perfect matching cannot use the edges $(u,w_1)$ and
  $(u,w_2)$.  As both nodes $w_1,w_2$ have degree $2$, $M$ must use the remaining edge on both
  nodes.  But these both contain $v$, so $M$ is not a perfect matching.

  So one of $w_1, w_2$ must have degree $\ge 3$.  Assume this is $w_1$, and let $u$, $x$ be the two
  neighbors of $w_1$ different from $v$.  If $u$ would have degree $2$, then $u$ and $v$ are
  contained in the edges $e_i:=(u,w_i)$, $f_i:=(v,w_i)$, $i=1,2$, and no other.  Hence, any perfect
  matching $M$ must choose either $e_1$ or $e_2$, and, correspondingly, $f_2$ or $f_1$.  In either
  case $w_1$ is covered, hence, the edge $(w_1,x)$ can never be chosen, so $G$ is not a face graph.
\end{proof}
If $G$ is an irreducible face graph then we say that a node $v$ is \emph{minimal} if the degree of
$v$ is $2$. For a minimal node $v$ the set
\begin{align*}
  \partners(v):=\cneigh\left(\neigh\left(v\right)\right)
  \setminus{\{v\}}
\end{align*}
is the set of \emph{partners} of the node $v$. This is the same as the set of nodes connected to $v$
via two different paths of length $2$.  A node $x\in\partners(v)$ is called a \emph{partner} of $v$.

Note, that any partners of a node always lie in the same layer as the node itself.
\autoref{lemma:partnerlemma} guarantees that any minimal node in an irreducible face graph has at
least one partner. We use the term \emph{partner} more generally for any node $x$ that is a partner
of some other $v$, without reference to the node $v$.  In particular, $x$ can be partner of several
different nodes in $G$.  However, the next corollary bounds this number from above.
\begin{proposition}\label{cor:partnerbound}
  Let $G$ be a face graph. Any partner $x$ in $G$ of degree $k$ has at most $k-1$ nodes it is
  partner for.
  
  Moreover, if $x$ is a partner for precisely $k-1$ nodes, then these $k-1$ nodes and $x$ are the
  upper or lower layer of a connected component in $G$.
\end{proposition}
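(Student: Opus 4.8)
The entire argument is driven by the defining property of a face (elementary) graph --- that \emph{every} edge lies in some perfect matching --- combined with a one-line pigeonhole count. First I would unwind the definitions. Since $x$ is a partner of a node $v$ precisely when $v$ has degree $2$, $v\neq x$, and both neighbors of $v$ lie in $\neigh(x)$, every node for which $x$ is a partner sits in the same layer as $x$, opposite to $\neigh(x)$. List these nodes as $v_1,\dots,v_t$. The point to record is that each $v_s$ has degree $2$ with $\neigh(v_s)\subseteq\neigh(x)$, and that $\neigh(x)$ has exactly $k$ elements.

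For the bound $t\le k-1$, I would fix any $y\in\neigh(x)$ and, using that $G$ is a face graph, choose a perfect matching $M$ containing the edge $(x,y)$. In $M$ the node $x$ is matched to $y$, while each $v_s$ must be matched to one of its two neighbors, all of which lie in $\neigh(x)$. As a matching is injective and $y$ is already taken by $x$, the nodes $v_1,\dots,v_t$ are matched to $t$ distinct elements of $\neigh(x)\setminus\{y\}$, whence $t\le k-1$. (Equivalently, the auxiliary multigraph on $\neigh(x)$ whose edges are the pairs $\neigh(v_s)$ must be a forest, but the matching count makes this detour unnecessary.)

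For the ``moreover'' part, suppose $t=k-1$. Then $\{x,v_1,\dots,v_{k-1}\}$ is a set of $k$ nodes all of whose neighbors lie in the $k$-element set $\neigh(x)$, so in \emph{every} perfect matching these $k$ nodes are matched injectively, hence bijectively, onto $\neigh(x)$. Thus in every perfect matching each $y\in\neigh(x)$ is matched to one of $x,v_1,\dots,v_{k-1}$. Invoking elementarity once more --- any edge leaving $\neigh(x)$ would have to occur in some perfect matching --- I conclude that no $y\in\neigh(x)$ has a neighbor outside $\{x,v_1,\dots,v_{k-1}\}$. Symmetrically, $x$ and each $v_s$ already have all their neighbors inside $\neigh(x)$ by construction, so no edge leaves $S:=\{x,v_1,\dots,v_{k-1}\}\cup\neigh(x)$; since $x$ is adjacent to every node of $\neigh(x)$ the induced subgraph on $S$ is connected, so $S$ is a connected component of $G$ with layers $\{x,v_1,\dots,v_{k-1}\}$ and $\neigh(x)$.

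The only step needing care is the logical passage ``true in every matching $\Rightarrow$ the offending edge cannot exist,'' which is exactly where the hypothesis that $G$ is a face graph (not merely a bipartite graph) is used; everything else is bookkeeping of the counts $\lvert\neigh(x)\rvert=k$ and $\deg(v_s)=2$.
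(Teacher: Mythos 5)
Your proof is correct and follows essentially the same route as the paper's: a pigeonhole argument showing that $x$ and the nodes it partners must be matched injectively into the $k$-element set $\neigh(x)$, and then, in the equality case, the observation that every perfect matching saturates $\neigh(x)$ by edges into $\{x,v_1,\dots,v_{k-1}\}$, so elementarity forbids any edge leaving this set. The only (harmless) differences are that you invoke a matching through a chosen edge $(x,y)$ where an arbitrary perfect matching suffices, and that you spell out the connectivity of the resulting component, which the paper leaves implicit.
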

\begin{proof}
  If $v$ is a node that has $x$ as its partner, then in any perfect matching $M$, $v$ uses up one of
  the nodes adjacent to $x$ for the edge covering $v$.  Now also $x$ needs to be covered, hence
  there can be at most $k-1$ nodes choosing $x$ as partner.

  If $x$ is a partner for precisely $k-1$ nodes $v_1,\ldots, v_{k-1}$, then in any perfect matching
  in $G$, all but one node in the neighborhood of $x$ is covered by an edge that has one endpoint
  among the $v_i$, $1\le i\le k-1$ or $x$.  But also $x$ needs to be covered, hence, there cannot be
  another edge that ends in a node in the neighborhood of $x$.
\end{proof}
\begin{corollary}
  A connected irreducible face graph of dimension $d$ with $n$ nodes has at most $d$ nodes of degree
  $2$ in each layer, if $n=d+1$, and at most $d-1$ otherwise.
\end{corollary}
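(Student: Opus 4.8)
The plan is to bound the number $s$ of degree-$2$ nodes in the upper layer $U$; the lower layer is identical after transposing the graph. First I record the bookkeeping. Since $G$ is connected, \eqref{eq:dimension} gives $m=2n+d-1$ edges, and as every edge meets $U$ in exactly one endpoint, $\sum_{x\in U}\deg(x)=m$. Write $D_2\subseteq U$ for the set of degree-$2$ nodes, so $s=|D_2|$, and $T:=U\setminus D_2$ for the nodes of degree $\ge 3$; then $|T|=n-s$ and $\sum_{x\in T}\deg(x)=m-2s$. A connected irreducible face graph of dimension $1$ is forced to be the cycle $K_{2,2}$ with $n=2$, so I assume $d\ge 2$, which guarantees $n\ge 3$.

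Next I would extract the two facts I need about partners. By \autoref{lemma:partnerlemma} every minimal node $v$ has at least one partner; moreover, because $n\ge 3$ the alternative in that lemma (that $v$, its two neighbours and $\partners(v)$ form an entire component, forcing $n=2$) is impossible, so every partner of $v$ has degree $\ge 3$. Hence $\varnothing\ne\partners(v)\subseteq T$ for each $v\in D_2$. Setting $p(x):=|\{w\in D_2: x\in\partners(w)\}|$ for $x\in T$ and counting the partner incidences two ways gives
\begin{align*}
  s\ \le\ \sum_{v\in D_2}|\partners(v)|\ =\ \sum_{x\in T}p(x)\,,
\end{align*}
while \autoref{cor:partnerbound} supplies $p(x)\le\deg(x)-1$ for every $x\in T$.

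The crux is to split on whether this last inequality is ever an equality. If $p(x)\le\deg(x)-2$ for all $x\in T$, then
\begin{align*}
  s\ \le\ \sum_{x\in T}\bigl(\deg(x)-2\bigr)\ =\ (m-2s)-2(n-s)\ =\ m-2n\ =\ d-1\,,
\end{align*}
so $s\le d-1$ with no hypothesis on $n$. Otherwise $p(x_0)=\deg(x_0)-1$ for some $x_0\in T$, and here I would invoke the \emph{moreover} clause of \autoref{cor:partnerbound}: the node $x_0$ together with the degree-$2$ nodes having $x_0$ as a partner must exhaust $U$. Thus $\deg(x_0)=n$, the remaining $n-1$ nodes of $U$ all have degree $2$, and $s=n-1$. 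Counting edges then gives $m=n+2(n-1)=3n-2$, whence $d=m-2n+1=n-1$ by \eqref{eq:dimension}; that is, $n=d+1$ and $s=d$.

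Putting the cases together gives exactly the claim: the equality case can occur only when $n=d+1$, and it yields $s=d$, while in all other situations we are in the first case and $s\le d-1$. The step I expect to be the real pivot is the dichotomy itself — the naive estimate $p(x)\le\deg(x)-1$ only yields the too-weak bound $s\le\tfrac12(n+d-1)$, and it is the sharpening to $\deg(x)-2$ away from the extremal configuration, together with the structural \emph{moreover} clause pinning that configuration down to $n=d+1$, that produces the clean constants. The one point demanding care is the inclusion $\partners(v)\subseteq T$, which is precisely what fails for $K_{2,2}$ and is the reason the degenerate one-dimensional case must be set aside.
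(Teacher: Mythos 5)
Your argument is correct and is essentially the paper's own proof: the same dichotomy driven by \autoref{cor:partnerbound} (either some partner $x_0$ attains the bound $\deg(x_0)-1$, in which case the \emph{moreover} clause forces its layer to consist of $x_0$ plus $n-1$ minimal nodes, giving $n=d+1$ and exactly $d$ degree-$2$ nodes; or every partner absorbs at most $\deg(x)-2$ minimal nodes and the degree count yields $k_2\le m-2n=d-1$), resting on the same two ingredients, \autoref{lemma:partnerlemma} and \autoref{cor:partnerbound}. Your explicit exclusion of $d=1$ is if anything more careful than the paper, whose case-2 counting silently assumes partners have degree at least $3$ and therefore also overlooks $K_{2,2}$, the one connected irreducible graph where the stated bound fails.
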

\begin{proof}
  Let $k_2$ be the number of nodes of degree $2$ (minimal nodes) in the upper layer. If all minimal
  nodes have the same partner, then, by the previous proposition, the graph has $k_2+1$ nodes, and
  $2k_2+1+k_2=3k_2+1$ edges. Hence, $d=3k_2+1-2k_2-2+1=k_2$. Otherwise, we have at least two
  partners in the upper layer, and the previous proposition implies $2n+k_2\le d+2n-1$, \emph{i.e.},
  $k_2\le d-1$.
\end{proof}
\begin{corollary}\label{cor:irredNodeBound}
  A connected irreducible $d$-dimensional face graph $G$ has at most $2d-2$ nodes.
\end{corollary}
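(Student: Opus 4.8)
The plan is to bound the number $n$ of nodes in one layer of $G$ (the two layers having equal size, since $G$ is bipartite and elementary), by splitting the nodes of that layer into the minimal ones, of degree $2$, and the remaining ones, of degree at least $3$, and bounding each group by $d-1$. Throughout I will use that $G$ is connected, so that \eqref{eq:dimension} reads $m = 2n + d - 1$ for the number $m$ of edges, and that an elementary bipartite graph has minimum degree at least $2$.

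First I would count the nodes of degree at least $3$ in the fixed layer. Since the degrees in a single layer sum to $m$, the total excess over degree two is $\sum_{v}(\deg v - 2) = m - 2n = d - 1$. A minimal node contributes $0$ to this sum and every node of degree at least $3$ contributes at least $1$, so the layer contains at most $d-1$ nodes of degree at least $3$. For the minimal nodes I would simply quote the preceding corollary, which bounds their number in the layer by $d-1$, except in the single configuration where all minimal nodes of the layer have one common partner. Adding the two estimates in the generic case yields $n \le (d-1)+(d-1) = 2d-2$, as wanted.

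The step that needs care is precisely that exceptional configuration, since there the preceding corollary only gives the weaker bound $d$ on the number of degree-$2$ nodes, so the additive estimate $d + (d-1)$ overshoots the target. The way around this is to observe that the configuration is completely rigid: by \autoref{cor:partnerbound} a common partner of the minimal nodes together with those nodes already exhausts a whole layer, so here $n = d+1$ directly, and $d+1 \le 2d-2$ exactly when $d \ge 3$, which is the range in which the statement is applied. I would therefore dispose of this case by hand and combine it with the generic estimate to conclude $n \le 2d-2$.
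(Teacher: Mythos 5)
Your skeleton is the same as the paper's proof: fix one layer, bound the number of nodes of degree at least $3$ by $d-1$ using $m=2n+d-1$, bound the number of minimal nodes by $d-1$ using the unlabeled corollary preceding the statement, and treat the degenerate one-partner situation separately. The first two bounds are fine. The gap is in your exceptional case. You claim that if all minimal nodes of the layer have one common partner, then by \autoref{cor:partnerbound} that partner and the minimal nodes exhaust the layer, so $n=d+1$. That is a misapplication of \autoref{cor:partnerbound}: its exhaustion clause applies only when the partner $x$ is a partner for \emph{precisely} $\deg(x)-1$ nodes, and nothing in your hypothesis rules out $\deg(x)$ being larger than the number of minimal nodes plus one. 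Concretely, take upper layer $\{u_1,u_2,x,y\}$, lower layer $\{w_1,w_2,w_3,w_4\}$, and edges $u_1w_1,\,u_1w_2,\,u_2w_2,\,u_2w_3,\,xw_1,\,xw_2,\,xw_3,\,xw_4,\,yw_1,\,yw_3,\,yw_4$. One checks that every edge lies in a perfect matching and no node is reducible, so this is a connected irreducible face graph with $m=11$, $n=4$, hence $d=4$. Both minimal nodes $u_1,u_2$ of the upper layer have $x$ as their unique partner, yet the layer also contains $y$, and $n=4\neq d+1=5$. Your proof files this graph under the exceptional case and then asserts a false statement about it, so as written it proves nothing for such graphs (the conclusion $n\le 2d-2$ does hold here, but not by your argument).

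The repair is short and stays within your framework. Cleanest: quote the preceding corollary by its actual statement --- its exception is the numerical condition $n=d+1$, not the structural condition you substituted for it --- and in that case $n=d+1\le 2d-2$ holds at once for $d\ge 3$, with no appeal to \autoref{cor:partnerbound} at all. Alternatively, keep your structural split but subdivide it: if $x$ is a partner for precisely $\deg(x)-1$ nodes, your rigidity argument is valid and gives $n=d+1$; otherwise the number $k_2$ of minimal nodes satisfies $k_2\le\deg(x)-2\le m-2n=d-1$ (the excess degree of $x$ alone), and your generic count $n\le(d-1)+(d-1)$ applies after all. For comparison, the paper's own proof cases on ``only one node of the layer has degree greater than $2$'', a condition that really is rigid (that node must then be the partner of all $n-1$ others and have degree exactly $n$), which is how it sidesteps this trap. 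Finally, you are right that the statement needs $d\ge 3$ --- the triangle graph is connected, irreducible and $2$-dimensional with $3>2d-2$ nodes per layer --- a restriction the paper leaves implicit.
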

\begin{proof}
  By the previous corollary the graph has at most $d$ nodes if only one node has degree greater than
  $2$. Otherwise, we have at most $d-1$ nodes of degree $2$, and for each of those we need a partner
  of degree at least $3$. This leaves us with $2d-2$ nodes using up all $2n+d-1$ edges.
\end{proof}

\begin{proposition}\label{prop:degree}
  Let $G$ be an connected irreducible face graph of dimension $d$ on $n$ nodes.  Then the maximum
  degree of a node in $G$ is $2d-n+1$ if $n > d+1$ and $n$ otherwise.
\end{proposition}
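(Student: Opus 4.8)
The plan is to bound the maximum degree $\Delta$ of $G$ from above, splitting into the two regimes. In the regime $n\le d+1$ the value $n$ is nothing but the trivial bipartite bound: a node in one layer has at most $n$ neighbours, since the opposite layer has $n$ nodes. So the whole content lies in the regime $n>d+1$, where irreducibility must be used to beat this trivial bound and produce $2d-n+1$. Throughout I fix a node $v$ of maximum degree and, without loss of generality, assume $v$ lies in the upper layer $U$; the lower layer is handled symmetrically.

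For the main case I would count edges incident to $U$. Since $G$ is connected, \eqref{eq:dimension} says the number of edges is $m=2n+d-1$, and as each edge meets $U$ in exactly one endpoint this equals $\sum_{u\in U}\deg(u)$. Every node of an elementary graph has degree at least $2$, and since $n>d+1\ge 2$ forces $n\ge 3$, a degree sequence that were constant equal to $2$ would make the connected graph an even cycle of length $\ge 6$, every node of which is reducible; irreducibility therefore guarantees $\deg(v)\ge 3$, so $v$ is not among the degree-$2$ nodes. Writing $k_2$ for the number of degree-$2$ nodes of $U$, the other $n-1-k_2$ nodes of $U\setminus\{v\}$ have degree $\ge 3$, whence
\[
  m\ \ge\ \deg(v)+2k_2+3\bigl((n-1)-k_2\bigr)\,,
\]
which rearranges to $\deg(v)\le d+2-n+k_2$.

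The decisive input is the earlier corollary bounding the number of degree-$2$ nodes in a layer: because $n\ne d+1$, that corollary gives $k_2\le d-1$, and substituting yields $\deg(v)\le 2d-n+1$. This is the step I expect to carry the proof, and the only subtlety is to make its hypotheses available cleanly — namely, to separate $\deg(v)$ from the $k_2$ counted degree-$2$ nodes (legitimate precisely because ruling out the cycle case gives $\deg(v)\ge 3$) and to perform the count in the layer that actually contains the maximum-degree node. Finally, I would note that both values are sharp: the $d$-simplex, whose irreducible face graph on $d+1$ nodes per layer has a node joined to the entire opposite layer, attains $n$ in the case $n=d+1$, and the extremal graphs with $k_2=d-1$ in which every non-maximal node of $U$ has degree exactly $3$ attain $2d-n+1$ in the case $n>d+1$.
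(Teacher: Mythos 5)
Your proof is correct and follows essentially the same route as the paper's: count the degree sum $m=2n+d-1$ over the layer containing the maximum-degree node, separate that node from the $k_2$ minimal nodes and bound the rest by degree $3$, then invoke the corollary $k_2\le d-1$ (valid since $n\ne d+1$) to get $\deg(v)\le 2d-n+1$. Your explicit justification that the maximum degree is at least $3$ (ruling out the long cycle via irreducibility) is a small point the paper leaves implicit, but otherwise the two arguments coincide.
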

\begin{proof}
  The bound for $n\le d+1$ is trivial.  If $G$ has dimension $d$ then $G$ has $d+2n-1$ edges, and
  any node has degree at least $2$. Let $k_2$ be the number of nodes of degree $2$ in the upper
  layer and $\delta$ the degree of a non-minimal node $v$. $v$ can be partner for at most $\delta-2$
  nodes, as otherwise $n\le d+1$.  Any minimal node has degree $2$, and any other node at least
  degree $3$. Hence,
  \begin{align*}
    \delta-3&\le d+2n-1-(2k_2+3(n-k_2))=d-n-1+k_2\\&\le d-n-1+d-1
    =2d-n-2\,.
  \end{align*}
  This implies the bound.
\end{proof}
The bound is best possible, see \autoref{fig:maxdeg}.
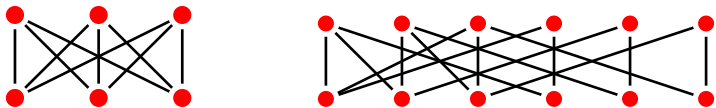
\begin{figure}[tb]
  \centering
  \begin{tikzpicture}[scale=1.1]
    \foreach \x in {0,1,2} {
      \coordinate (u\x) at (\x,1);
      \coordinate (v\x) at (\x,0);
    }
      
    \draw [line width=1pt] (u0) -- (v0);
    \draw [line width=1pt] (u0) -- (v1);
    \draw [line width=1pt] (u0) -- (v2);
    \draw [line width=1pt] (u1) -- (v0);
    \draw [line width=1pt] (u1) -- (v1);
    \draw [line width=1pt] (u1) -- (v2);
    \draw [line width=1pt] (u2) -- (v0);
    \draw [line width=1pt] (u2) -- (v1);
    \draw [line width=1pt] (u2) -- (v2);
    \foreach \x in {0,1,2} {
      \draw [color=white,fill=red,line width=2pt] (u\x) circle (4pt);
      \draw [color=white,fill=red,line width=2pt] (v\x) circle (4pt);
    }    
  \end{tikzpicture}
  \qquad\qquad
  \begin{tikzpicture}
    \foreach \x in {0,1,2,3,4,5} {
      \coordinate (u\x) at (\x,1);
      \coordinate (v\x) at (\x,0);
    }
      
    \draw [line width=1pt] (u0) -- (v0);
    \draw [line width=1pt] (u0) -- (v1);
    \draw [line width=1pt] (u0) -- (v3);
    \draw [line width=1pt] (u1) -- (v1);
    \draw [line width=1pt] (u1) -- (v2);
    \draw [line width=1pt] (u1) -- (v4);
    \draw [line width=1pt] (u2) -- (v0);
    \draw [line width=1pt] (u2) -- (v2);
    \draw [line width=1pt] (u2) -- (v5);
    \draw [line width=1pt] (u3) -- (v0);
    \draw [line width=1pt] (u3) -- (v3);
    \draw [line width=1pt] (u4) -- (v1);
    \draw [line width=1pt] (u4) -- (v4);
    \draw [line width=1pt] (u5) -- (v2);
    \draw [line width=1pt] (u5) -- (v5);
    \foreach \x in {0,1,2,3,4,5} {
      \draw [color=white,fill=red,line width=2pt] (u\x) circle (4pt);
      \draw [color=white,fill=red,line width=2pt] (v\x) circle (4pt);
    }    
  \end{tikzpicture}
  \caption{Face graphs with nodes of maximal degree}
  \label{fig:maxdeg}
\end{figure}

\begin{corollary}\label{cor:MinNumberOfEdges}
  Let $G$ be a connected irreducible face graph with $n\ge 4$ nodes on each of its layers.  Then $G$
  has at least $2n+\lceil\frac{n}{2}\rceil$ edges.
\end{corollary}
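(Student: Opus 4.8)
The plan is to fix one layer, say the upper layer $U$, and play the trivial degree bound against a bound coming from the partner structure. Write $a$ for the number of minimal nodes (degree $2$) in $U$; the remaining $n-a$ nodes of $U$ have degree $\ge 3$, and since the degree sum over $U$ equals $m$ this already gives
\begin{align*}
  m\ \ge\ 2a + 3(n-a)\ =\ 3n-a\,.
\end{align*}
The whole point will be to obtain a matching lower bound $m\ge 2n+a$; adding the two inequalities and dividing by $2$ yields $m\ge\lceil 5n/2\rceil = 2n+\lceil n/2\rceil$.

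First I would record that, because $G$ is connected with $n\ge 4$, \autoref{lemma:partnerlemma} forces every minimal node of $U$ to have at least one partner, and moreover that every such partner has degree $\ge 3$: the other alternative in the lemma, where $v,w_1,w_2$ together with $N=\partners(v)$ form a whole connected component with $N$ a single node, would give only two nodes in each layer, contradicting $n\ge 4$. Next, the \emph{Moreover} part of \autoref{cor:partnerbound} splits the argument. If some partner $x$ of degree $k$ is a partner for exactly $k-1$ nodes, then $\{x\}$ together with those $k-1$ minimal nodes is the entire upper layer; hence $U$ consists of a single node of degree $n$ and $n-1$ nodes of degree $2$, so $m=n+2(n-1)=3n-2$, and one checks directly that $3n-2\ge 2n+\lceil n/2\rceil$ for all $n\ge 4$.

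In the remaining case every partner $x$ is a partner for at most $\deg(x)-2$ nodes, and here I would double count incidences between minimal nodes and their partners. Each minimal node contributes at least one such incidence since it has a partner, while a partner $x$ contributes at most $\deg(x)-2$ of them, and every partner lies among the $n-a$ nodes of $U$ of degree $\ge 3$; therefore
\begin{align*}
  a\ \le\ \sum_{\substack{x\in U\\ \deg(x)\ge 3}}\bigl(\deg(x)-2\bigr)\ =\ (m-2a)-2(n-a)\ =\ m-2n\,,
\end{align*}
where $m-2a$ is the degree sum over the non-minimal nodes of $U$. This is precisely $m\ge 2n+a$, which combined with $m\ge 3n-a$ finishes the proof.

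The only real subtlety is the dichotomy coming from the \emph{Moreover} clause. The clean inequality $a\le m-2n$ is genuinely false in the degenerate ``one big node'' configuration, where the single partner covers $\deg(x)-1$ nodes, so that case has to be peeled off and verified by hand; and it is exactly there that the hypothesis $n\ge 4$ is needed, since $3n-2\ge 2n+\lceil n/2\rceil$ can fail for small $n$. Everything else is a two-line counting argument once the partner facts from \autoref{lemma:partnerlemma} and \autoref{cor:partnerbound} are in place.
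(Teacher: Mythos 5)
Your proof is correct, and it rests on the same pillars as the paper's own argument: a degree count within a single layer, \autoref{lemma:partnerlemma} to guarantee every minimal node a partner of degree at least $3$, and the \emph{Moreover} clause of \autoref{cor:partnerbound} to peel off the degenerate configuration, which has exactly $3n-2$ edges and is indeed the only place where $n\ge 4$ is used. The difference lies in how the main case is closed. The paper keeps track of the number $p$ of partners in the layer, shows the partners together carry at least $2p+k_2$ edges (your $a$ is its $k_2$), and then splits into the subcases $k_2\ge p$ and $k_2<p$, using $k_2+p\le n$ to get $m\ge 3n-p\ge 2n+\lceil n/2\rceil$, resp.\ $m\ge 3n-k_2\ge 2n+\lceil n/2\rceil$. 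You compress this into the single inequality $a\le m-2n$ --- the total excess degree of the non-minimal nodes must cover all minimal nodes --- and average it against the trivial bound $m\ge 3n-a$; this eliminates $p$ from the argument entirely and disposes of both of the paper's subcases at once. A further small gain in precision: your dichotomy (some partner serves exactly $\deg-1$ minimal nodes, versus every partner serves at most $\deg-2$) is exactly the split that the \emph{Moreover} clause supports, whereas the paper's formulation of the degenerate case (``some node is a partner for all minimal nodes'') needs an extra observation to see that its complement really forces the $\deg(x)-2$ bound on every partner. In short: same route and same key lemmas, but your bookkeeping is tighter and case-free.
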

\begin{proof}
  Any node has degree at least $2$. By \autoref{lemma:partnerlemma} we have to find a partner of
  higher degree for each node of degree $2$.  On the other hand, \autoref{cor:partnerbound} limits
  the number of minimal nodes a node can be partner for. We distinguish two cases:

  If there is a node $u$ in the graph that is a partner for all minimal nodes, then necessarily
  $\deg(u)=n$, hence the graph has $3n-2\ge 2n+\lceil\frac{n}{2}\rceil$ edges.

  Otherwise, there are $k_2$ minimal nodes and $p\ge 2$ partners in the graph. We consider the cases
  $k_2\ge p$ and $k_2<p$ separately.

  In the first case we have $k_2\ge p$, hence $p\le \lfloor\frac{n}{2}\rfloor$.  The $p$ partners in
  the graph together must be adjacent to at least $2p+k_2$ edges, hence we have at least
  \begin{align*}
    2k_2+2p+k_2+3(n-k_2-p)&=3n-p\ge 2n+\left\lceil\frac n2\right\rceil
  \end{align*}
  edges in the graph.  In the second case the number of edges is at least
  \begin{align*}
    2k_2+3p+3(n-k_2-p)&=3n-k_2\ge 2n+\left\lceil\frac n2\right\rceil\qedhere
  \end{align*}
\end{proof}

\begin{proposition}\label{prop:lowNodeNumbers}
  An irreducible face graph with two nodes in each layer has four edges, an irreducible face graph
  with three nodes in each layer has at least seven edges.
\end{proposition}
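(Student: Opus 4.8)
The plan is to determine both graphs completely from elementary degree constraints. Throughout I use that every node of a face graph has degree at least $2$, since a face graph is elementary and an elementary graph has minimum degree $2$ (as recorded right after the ear-decomposition theorem). These are exactly the small cases not covered by the bound of \autoref{cor:MinNumberOfEdges}, which needs $n\ge 4$, so they must be treated by hand.

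For two nodes in each layer, a node can have at most the two nodes of the opposite layer as neighbors, so its degree is at most $2$; together with the lower bound $\ge 2$ this forces every node to have degree exactly $2$. Hence the graph is the complete bipartite graph on $2+2$ nodes and has four edges. It is in fact irreducible, which is why no such graph with fewer edges exists: for any node $v$ its two neighbors have both nodes of $v$'s layer as common neighbors, so $\partners(v)\neq\varnothing$.

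For three nodes in each layer I would argue by contradiction, assuming an irreducible face graph $G$ has at most six edges. Summing degrees over the upper layer equals the number of edges; with three nodes each of degree $\ge 2$ this already gives at least six edges, hence exactly six, and then every node in both layers has degree exactly $2$. A $2$-regular bipartite graph is a disjoint union of cycles of even length at least $4$, and the only such graph on three nodes per layer is a single $6$-cycle, since a $4$-cycle would strand one node in each layer that cannot itself lie on a cycle. So $G$ must be this $6$-cycle.

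The contradiction then comes from irreducibility. The $6$-cycle has girth $6$, so no two nodes share two common neighbors, as that would create a $4$-cycle. Thus for every node $v$ with neighbors $w_1,w_2$ we have $\neigh(w_1)\cap\neigh(w_2)=\{v\}$, so $\partners(v)=\varnothing$ and $v$ is reducible; hence no node of the $6$-cycle is irreducible, contradicting the assumption on $G$. Therefore an irreducible face graph on $3+3$ nodes has at least seven edges. The only step that needs real care is confirming that the degree count pins down the $6$-cycle as the unique six-edge candidate (covering the disconnected case as well, via the union-of-even-cycles structure); once that is established, its reducibility is immediate and the bound follows.
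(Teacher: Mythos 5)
Your proof is correct and takes essentially the same route as the paper's: the paper's one-line argument for the three-node case (``just observe that we need to have at least one node of degree $\ge 3$'') is exactly your degree count, and your verification that an all-degree-$2$ graph on $3+3$ nodes must be a $6$-cycle, every node of which is reducible, is precisely the justification the paper leaves implicit. The two-node case is likewise handled the same way (it is the trivial $K_{2,2}$ case), so there is nothing to add.
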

\begin{proof}
  The first case is trivial (see \autoref{fig:minimalgraphs}(a)). For the second case just observe
  that we need to have at least one node of degree $\ge 3$.
\end{proof}
The given bounds are tight, as the graphs in \autoref{fig:minimalgraphs} show.

\begin{proposition}
  Let $G$ be an irreducible face graph on $n$ nodes and $v$ a node of degree $k$ in $G$.  Then at
  most $k-1$ neighbors of $v$ can have degree $2$.
\end{proposition}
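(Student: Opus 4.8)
The plan is to argue by contradiction. Assume $v$ lies in the upper layer (the other case is symmetric) and that all $k$ neighbours $w_1,\dots,w_k$ of $v$, which then lie in the lower layer, have degree $2$. For each $i$ let $x_i$ denote the unique neighbour of $w_i$ other than $v$; note that $x_i$ lies in the upper layer and $x_i\neq v$. The whole argument rests on playing two facts against each other: irreducibility forces a coincidence among the $x_i$, whereas the perfect-matching structure forbids it.

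First I would extract information from irreducibility. Each $w_i$ has degree $2$ and is therefore a minimal node, so by \autoref{lemma:partnerlemma} it has at least one partner, i.e. $\partners(w_i)=\cneigh(\neigh(w_i))\setminus\{w_i\}\neq\varnothing$. A partner of $w_i$ is a common neighbour of its two neighbours $v$ and $x_i$ that is different from $w_i$. Since $\neigh(v)=\{w_1,\dots,w_k\}$, such a partner must be some $w_j$ with $j\neq i$ that is also adjacent to $x_i$; as $w_j$ has degree $2$ with neighbours $v$ and $x_j$, and $x_i\neq v$, this is possible only if $x_j=x_i$. Applying this to $w_1$ produces an index $j\neq 1$ with $x_j=x_1=:x$, so that $w_1$ and $w_j$ are both adjacent to exactly the two nodes $v$ and $x$.

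Next I would derive a contradiction from the matching structure, and this is where the hypothesis $k\ge 3$ becomes essential. Because $G$ is elementary, every edge lies in a perfect matching; choosing an index $c\notin\{1,j\}$ (which exists precisely when $k\ge 3$), there is a perfect matching $M$ containing the edge $(v,w_c)$. In $M$ the node $v$ is matched to $w_c$, so neither $w_1$ nor $w_j$ can use its edge to $v$; having degree $2$, each is then forced onto its only other neighbour, which is $x$ in both cases. Thus $x$ is incident to two edges of $M$, contradicting that $M$ is a matching. Hence not all $k$ neighbours of $v$ can have degree $2$, so at most $k-1$ do.

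The main obstacle is exactly the tension resolved in the last step: \autoref{lemma:partnerlemma} demands that two degree-$2$ neighbours of $v$ share their second neighbour, while a perfect matching routed through a third neighbour of $v$ shows this shared neighbour would have to be matched twice. The only way to escape the contradiction is to have no third neighbour, i.e. $k=2$; in that boundary case the coincidence $x_1=x_2$ is unobstructed and (since $x$ can then have no further neighbour either) forces $\{v,x,w_1,w_2\}$ to be a $4$-cycle connected component, the single degenerate exception (an edge of $\birkhoff$) to the bound. I would note this exception explicitly and otherwise assume $k\ge 3$ throughout.
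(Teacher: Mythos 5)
Your proof is correct, and it refines --- indeed quietly corrects --- the paper's own argument. The paper's proof is a single sentence: $v$ must be adjacent to a partner of each degree-$2$ node in its neighborhood; the unstated completion is the second alternative of \autoref{lemma:partnerlemma}, by which such a partner has degree at least $3$ and is therefore a neighbor of $v$ that is not itself of degree $2$. You reach the same first waypoint (any partner of a degree-$2$ neighbor $w_i$ is a common neighbor of $v$ and $x_i$, hence is some other $w_j$, forcing $x_j=x_i$), but instead of invoking the degree clause of that lemma you finish with an explicit perfect-matching argument: a matching through a third edge $(v,w_c)$ strands $w_1$ and $w_j$ on their common second neighbor $x$, which cannot be covered twice. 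The two routes are logically close --- your matching argument in effect re-proves the relevant case of \autoref{lemma:partnerlemma} --- but yours buys real precision: it isolates the case $k=2$, where no third edge exists, and there the proposition as printed is actually false. The $4$-cycle, i.e.\ the irreducible face graph of a segment shown in \autoref{fig:minimalgraphs}, has every node of degree $2$ with both of its neighbors of degree $2$, exceeding the claimed bound $k-1=1$; this is precisely the connected-component alternative of \autoref{lemma:partnerlemma} that the paper's one-line proof silently skips. So your formulation --- the bound holds for $k\ge 3$, with the $4$-cycle component as the unique exception at $k=2$ --- is the correct version of the statement, and your proof of it is sound.
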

\begin{proof}
  $v$ must be connected to the partner of all nodes in its neighborhood that have degree $2$.
\end{proof}
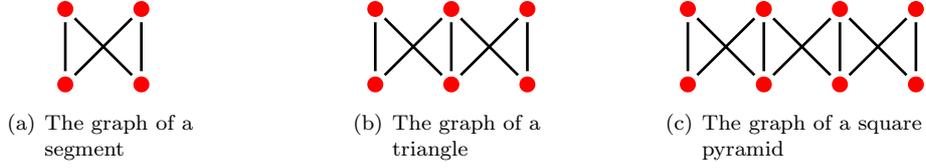
\begin{figure}[bt]
  \centering
  \subfigure[The graph of a segment]{
    \label{fig:graphofsegment}
    \begin{tikzpicture}
      \foreach \x in {-1,0,1,2} {
        \coordinate (u\x) at (\x,1);
        \coordinate (v\x) at (\x,0);
      }
 
      \coordinate (u-1) at (-.5,1);

      \draw [line width=1pt] (u0) -- (v0);
      \draw [line width=1pt] (u0) -- (v1);
      \draw [line width=1pt] (u1) -- (v1);
      \draw [line width=1pt] (u1) -- (v0);
      \foreach \x in {0,1} {
        \draw [color=white,fill=red,line width=2pt] (u\x) circle (4pt);
        \draw [color=white,fill=red,line width=2pt] (v\x) circle (4pt);
      }
      \draw [color=white] (u-1) circle (4pt);
      \draw [color=white] (u2) circle (4pt);
    \end{tikzpicture}
  }
  \qquad\qquad
  \subfigure[The graph of a triangle]{
    \label{fig:graphoftriangle}
    \begin{tikzpicture}
      \foreach \x in {0,1,2} {
        \coordinate (u\x) at (\x,1);
        \coordinate (v\x) at (\x,0);
      }
      
      \draw [line width=1pt] (u0) -- (v0);
      \draw [line width=1pt] (u0) -- (v1);
      \draw [line width=1pt] (u1) -- (v0);
      \draw [line width=1pt] (u1) -- (v1);
      \draw [line width=1pt] (u1) -- (v2);
      \draw [line width=1pt] (u2) -- (v1);
      \draw [line width=1pt] (u2) -- (v2);
      
      \foreach \x in {0,1,2} {
        \draw [color=white,fill=red,line width=2pt] (u\x) circle (4pt);
        \draw [color=white,fill=red,line width=2pt] (v\x) circle (4pt);
      }
    \end{tikzpicture}
  }
  \qquad\qquad
  \subfigure[The graph of a square pyramid]{
    \label{fig:graphofsquarepyramid}
    \begin{tikzpicture}
      \foreach \x in {0,1,2,3} {
        \coordinate (u\x) at (\x,1);
        \coordinate (v\x) at (\x,0);
      }
      
      \draw [line width=1pt] (u0) -- (v0);
      \draw [line width=1pt] (u0) -- (v1);
      \draw [line width=1pt] (u1) -- (v0);
      \draw [line width=1pt] (u1) -- (v1);
      \draw [line width=1pt] (u1) -- (v2);
      \draw [line width=1pt] (u2) -- (v1);
      \draw [line width=1pt] (u2) -- (v2);
      \draw [line width=1pt] (u2) -- (v3);
      \draw [line width=1pt] (u3) -- (v3);
      \draw [line width=1pt] (u3) -- (v2);
      
      \foreach \x in {0,1,2,3} {
        \draw [color=white,fill=red,line width=2pt] (u\x) circle (4pt);
        \draw [color=white,fill=red,line width=2pt] (v\x) circle (4pt);
      }
    \end{tikzpicture}
  }
  \caption{Graphs with a minimal number of edges.}
  \label{fig:minimalgraphs}
\end{figure}

\section{The Structure of Faces of $\birkhoff$}
\label{sec:combtypes}

Here we review some basic properties of facets and faces of Birkhoff polytopes that we need for our
classifications in the following section.

There is a quite canonical way to split the set of vertices of a face of $\birkhoff$ into two
non-empty subsets on parallel hyperplanes at distance one.  Let $G$ be a face graph with some edge
$e$, $M_e$ the set of all perfect matchings in $G$ containing $e$, and $M_{\neg e}$ its
complement. Clearly, both $M_e$ and $M_{\neg e}$ define faces of $\face(G)$, and $G=M_e\cup M_{\neg
  e}$ (not disjoint). Geometrically, if $e$ connects the nodes $i$ and $j$, then all vertices of
$M_e$ satisfy $x_{ij}=1$, while all others lie on the hyperplane $x_{ij}=0$.

We start with some properties of facets of a face $F$ of $\birkhoff$ with face graph $\gr(F)$. The
face graph of a facet of $F$ is a face subgraph of $\gr(F)$. We call a set $C$ of edges in $\gr(F)$
\emph{facet defining} if $\gr(F)-C$ is the face graph of a facet of $F$.  Brualdi and
Gibson~\cite[p.~204f]{BG77-1} show that a facet defining set $C$ is a (usually not perfect) matching
in $\gr(F)$ and that any two different facet defining sets are disjoint. This leads to the following
characterization of face subgraphs of facets.
\begin{theorem}[Brualdi and Gibson~{\cite[Cor.~2.11]{BG77-1}}]\label{prop:BG:facets}
  Let $G$ be a connected face graph and $S$ a face subgraph of $G$.  $\face(S)$ is a facet of
  $\face(G)$ if and only if either
  \begin{enumerate}
  \item $S$ is connected and differs from $G$ by a single edge, or
  \item $S$ splits into disjoint face graphs $S_1, \ldots, S_k$ such that there are nodes $u_i,
    v_i\in S_i$ inducing a decomposition of $G$ as
    \begin{align*}
      G\ & =\ S_1 \cup \ldots S_k\cup \{(u_1,v_2), (u_2,v_3), \ldots,
      (u_k,v_1)\}\,.
    \end{align*}
  \end{enumerate}
\end{theorem}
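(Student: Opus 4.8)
The plan is to translate the facet condition into the dimension formula \eqref{eq:dimension} and then analyse how the deleted edges $C:=E(G)\setminus E(S)$ break $G$ apart. Write $n$ for the number of nodes in each layer and $m:=|E(G)|$, so that $\dim\face(G)=m-2n+1$. Since a face subgraph has the same node set, if $S$ has $m'$ edges and $k'$ connected components then $\dim\face(S)=m'-2n+k'$. For a polytope a proper face is a facet precisely when its dimension is one less than that of the whole, so $\face(S)$ is a facet if and only if $m'+k'=m$, i.e.\ $|C|=k'$. This already settles both converse implications: in case~(1) we delete a single edge ($|C|=1$) while keeping $S$ connected ($k'=1$), and in case~(2) we delete the $k$ cycle edges to obtain $k'=k$ components; in both situations $|C|=k'$, the dimension drops by exactly one, and $\face(S)$ is a genuine proper face of $\face(G)$ because $S$ is a face subgraph of $G$.

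For the forward direction assume $\face(S)$ is a facet. Then $|C|=k'$, and by the result of Brualdi and Gibson recalled above $C$ is a matching of $G$. If $|C|=1$ then $k'=1$, so $S$ is connected and differs from $G$ by a single edge, which is case~(1). Assume henceforth $k:=|C|=k'\ge 2$, and let $S_1,\dots,S_k$ be the connected components of $S$. Contracting each $S_i$ to a single vertex while keeping the edges of $C$ produces a multigraph $H$ on $k$ vertices with exactly $k$ edges, and $H$ is connected since $G$ is. The whole statement reduces to showing that $H$ is a single $k$-cycle meeting every $S_i$.

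The main obstacle, and the step carrying all the content, is to show that each $S_i$ sends exactly two edges of $C$ to other components and that no edge of $C$ has both endpoints inside one $S_i$. Here I would use that each $S_i$, being a connected component of the face graph $S$, is elementary and hence balanced, with the same number of nodes in each layer. In any perfect matching $M$ of $G$ the nodes of $S_i$ matched outside $S_i$ split evenly between the two layers — the internally matched nodes already pair off one upper node with one lower node — so their number is always even. Let $\ell_i$ be the number of edges of $C$ leaving $S_i$. If $\ell_i=0$ then $S_i$ is disconnected from the rest of $G$, contradicting connectedness; if $\ell_i=1$, then at most one node of $S_i$, the endpoint of the unique leaving edge $e$, could ever be matched outside, so by parity that number is forced to be $0$, meaning $e$ lies in no perfect matching and contradicting that $G$ is a face graph. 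Hence $\ell_i\ge 2$ for every $i$.

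To finish I would sum these inequalities. An edge of $C$ joining two distinct components contributes $2$ to $\sum_i\ell_i$, while a $C$-edge with both endpoints in one component contributes $0$; thus $\sum_i\ell_i=2(k-\lambda)$, where $\lambda$ counts such internal edges. Combined with $\sum_i\ell_i\ge 2k$ this forces $\lambda=0$ and $\ell_i=2$ for all $i$. So $H$ has no loops, is $2$-regular and connected, hence a single cycle through all $k$ vertices; relabel $S_1,\dots,S_k$ along this cycle. For each $i$ let $u_i\in S_i$ be the endpoint of the $C$-edge leaving towards $S_{i+1}$ and $v_i\in S_i$ the endpoint of the $C$-edge entering from $S_{i-1}$; these are distinct because $C$ is a matching, and each edge automatically joins opposite layers. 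Then $C=\{(u_1,v_2),(u_2,v_3),\dots,(u_k,v_1)\}$ and $G=S_1\cup\cdots\cup S_k\cup C$, which is exactly case~(2).
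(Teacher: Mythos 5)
The paper never proves this statement at all: it is quoted as \cite[Cor.~2.11]{BG77-1}, with only the remark that it follows from Brualdi and Gibson's analysis of facet defining sets, so there is no internal argument to compare yours against; judged on its own terms, your proof is correct. Your route is to turn the facet condition into pure counting via the dimension formula \eqref{eq:dimension}: with $C=E(G)\setminus E(S)$ and $k'$ the number of components of $S$, the face $\face(S)$ is a facet of $\face(G)$ exactly when $|C|=k'$, which settles both ``if'' directions in one line. The substance is your forward direction: the parity observation that a balanced component $S_i$ always has an even number of nodes matched outside itself, so each component must meet at least two edges of $C$ (a unique leaving edge could never lie in a perfect matching, contradicting that $G$ is a face graph), after which the count $\sum_i\ell_i=2\left(|C|-\lambda\right)\ge 2k'$ forces $\lambda=0$, $\ell_i=2$ for all $i$, and hence the circular structure. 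This is essentially self-contained: beyond \eqref{eq:dimension}, the only external ingredient is Brualdi--Gibson's matching property of facet defining sets, and you invoke it only to get $u_i\ne v_i$, a point the statement does not even require and which your own balance argument would also deliver (if $u_i=v_i$, a perfect matching through the edge entering $S_i$ would leave $S_i$ with an odd number of externally matched nodes). One small interpretive caveat: in the converse for case (2) you silently take the $S_i$ to be the connected components of $S$, so that $k'=k$; that is the intended reading, and in any case a disconnected $S_i$ would give $\dim\face(S)\ge\dim\face(G)$, impossible for a proper face, so nothing is lost.
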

Note that in the second case $u_i$ and $v_i$ are necessarily on different layers of the graph. See
\autoref{fig:twotypesoffacets} for an illustration of the two types.
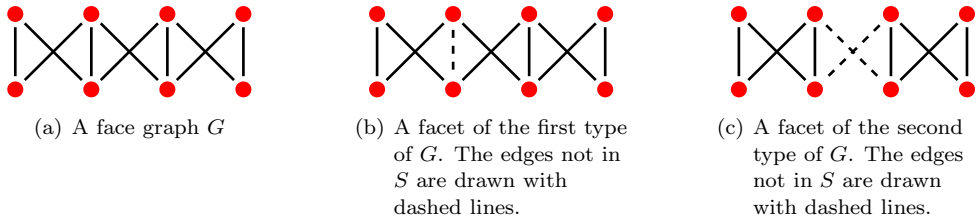
\begin{figure}[t]
  \centering
  \subfigure[A face graph $G$]{
    \label{fig:faceofG}
    \begin{tikzpicture}
      \foreach \x in {0,1,2,3} {
        \coordinate (u\x) at (\x,1);
        \coordinate (v\x) at (\x,0);
      }
 
      \coordinate (u-1) at (-.5,1);
       
      \draw [line width=1pt] (u0) -- (v0);
      \draw [line width=1pt] (u0) -- (v1);
      \draw [line width=1pt] (u1) -- (v1);
      \draw [line width=1pt] (u1) -- (v0);
      \draw [line width=1pt] (u1) -- (v2);
      \draw [line width=1pt] (u2) -- (v1);
      \draw [line width=1pt] (u2) -- (v2);
      \draw [line width=1pt] (u2) -- (v3);
      \draw [line width=1pt] (u3) -- (v2);
      \draw [line width=1pt] (u3) -- (v3);
      \foreach \x in {0,1,2,3} {
        \draw [color=white,fill=red,line width=2pt] (u\x) circle (4pt);
        \draw [color=white,fill=red,line width=2pt] (v\x) circle (4pt);
      }
    \end{tikzpicture}
  }
  \qquad  \quad
  \subfigure[A facet of the first type of $G$.  The edges not in $S$ are drawn with dashed lines.]{
    \label{fig:facetoffirsttype}
    \begin{tikzpicture}
      \foreach \x in {0,1,2,3} {
        \coordinate (u\x) at (\x,1);
        \coordinate (v\x) at (\x,0);
      }
 
      \coordinate (u-1) at (-1.3,1);
      \coordinate (u4) at (4.3,1);

      \draw [line width=1pt] (u0) -- (v0);
      \draw [line width=1pt] (u0) -- (v1);
      \draw [line width=1pt] (u1) -- (v0);
      \draw [line width=1pt,dashed] (u1) -- (v1);
      \draw [line width=1pt] (u1) -- (v2);
      \draw [line width=1pt] (u2) -- (v1);
      \draw [line width=1pt] (u2) -- (v2);
      \draw [line width=1pt] (u2) -- (v3);
      \draw [line width=1pt] (u3) -- (v2);
      \draw [line width=1pt] (u3) -- (v3);
      \foreach \x in {0,1,2,3} {
        \draw [color=white,fill=red,line width=2pt] (u\x) circle (4pt);
        \draw [color=white,fill=red,line width=2pt] (v\x) circle (4pt);
      }
    \end{tikzpicture}
  }
  \qquad  \quad
  \subfigure[A facet of the second type of $G$. The edges not in $S$ are drawn with dashed lines.]{
    \label{fig:facetofseconttype}
    \begin{tikzpicture}
      \foreach \x in {0,1,2,3} {
        \coordinate (u\x) at (\x,1);
        \coordinate (v\x) at (\x,0);
      }
 
      \coordinate (u-1) at (-1.3,1);
      \coordinate (u4) at (4.3,1);
       
      \draw [line width=1pt] (u0) -- (v0);
      \draw [line width=1pt] (u0) -- (v1);
      \draw [line width=1pt] (u1) -- (v1);
      \draw [line width=1pt] (u1) -- (v0);
      \draw [line width=1pt,dashed] (u1) -- (v2);
      \draw [line width=1pt,dashed] (u2) -- (v1);
      \draw [line width=1pt] (u2) -- (v2);
      \draw [line width=1pt] (u2) -- (v3);
      \draw [line width=1pt] (u3) -- (v2);
      \draw [line width=1pt] (u3) -- (v3);
      \foreach \x in {0,1,2,3} {
        \draw [color=white,fill=red,line width=2pt] (u\x) circle (4pt);
        \draw [color=white,fill=red,line width=2pt] (v\x) circle (4pt);
      }
    \end{tikzpicture}
  }
  \caption{A face graph with two types of facets.}
  \label{fig:twotypesoffacets}
\end{figure}

It follows from a Theorem of Hartfiel~\cite[Theorem $\star$]{Hartfiel70} that any connected face
graph $G$ with $\dim(\face(G))\ge 2$ is reducible if all facet defining sets in $G$ have two or more
edges. Geometrically, this implies the following lemma.
\begin{lemma}\label{cor:non_prod_faces}
  Let $F$ be a face of $\birkhoff$. If $F$ is not a product, then $F$ has a facet that is not a
  product.
\end{lemma}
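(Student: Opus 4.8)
The plan is to translate the statement into the language of face graphs and then invoke the consequence of Hartfiel's theorem recorded just above. Recall that a face $F$ is a product exactly when its face graph $\gr(F)$ is disconnected: one direction is the preceding proposition, and the converse is the cited \cite[Cor.~4.7]{BG77-2}. Thus the hypothesis ``$F$ is not a product'' is equivalent to ``$\gr(F)$ is connected'', and the conclusion ``$F$ has a facet that is not a product'' is equivalent to ``$\gr(F)$ has a connected face subgraph defining a facet''.

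First I would pass to an irreducible representative. Since $F$ is not a product, every face graph representing it is connected. If such a graph has a reducible node $v$ --- a degree-$2$ node with $\cneigh(\neigh(v))=\{v\}$ --- I reduce at $v$. This preserves the combinatorial type by \autoref{prop:isomgraphs}, keeps the graph connected (the reduced graph still represents the non-product $F$), and keeps it simple: a parallel edge could only arise from a node adjacent to both neighbors of $v$, i.e.\ a common neighbor other than $v$, which $\cneigh(\neigh(v))=\{v\}$ forbids. Iterating strictly decreases the node count, so I arrive at a connected, simple, irreducible face graph $G$ of the same combinatorial type, hence the same dimension, as $F$. The cases $\dim F\le 1$ are immediate (a segment's facets are its vertices), so I assume $\dim\face(G)=\dim F\ge 2$.

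Next I would read off facets from facet-defining sets via \autoref{prop:BG:facets}. A facet-defining set $C$ is nonempty, and it is of the first type with $|C|=1$, yielding a facet whose face graph $G-C$ is connected, or of the second type with $|C|\ge 2$, yielding a disconnected (hence product) facet; these alternatives are exclusive. Therefore a facet of $F$ fails to be a product precisely when its facet-defining set is a single edge, and ``every facet of $F$ is a product'' is equivalent to ``every facet-defining set of $G$ has at least two edges''. By the consequence of Hartfiel's theorem stated above, for a connected $G$ with $\dim\ge 2$ this last condition forces $G$ to be reducible. Since $G$ is irreducible, the condition must fail: some facet-defining set consists of a single edge, and the associated first-type facet is connected, hence not a product.

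The step I expect to be the main obstacle is the bookkeeping in the reduction: I must guarantee simultaneously that reducing at a reducible node keeps the graph connected, keeps it simple (so that irreducibility and the Hartfiel consequence, both phrased for simple face graphs, apply verbatim), and preserves the combinatorial type. All three are forced by the definitions --- connectivity from the product/disconnectedness equivalence, simplicity from $\cneigh(\neigh(v))=\{v\}$, and invariance of the type from \autoref{prop:isomgraphs} --- but this is exactly where the argument would collapse if one reduced at a degree-$2$ node that is \emph{not} reducible, which is why restricting to irreducible nodes is essential. Once $G$ is irreducible, the remainder is a direct application of the facet description in \autoref{prop:BG:facets}.
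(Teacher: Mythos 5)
Your proof is correct and follows essentially the same route as the paper's: pass to a connected irreducible face graph, invoke the Hartfiel consequence to produce a facet-defining set consisting of a single edge, and conclude that the resulting facet has a connected graph and is therefore not a product. The only cosmetic difference is that you justify connectedness of $G-C$ via the dichotomy in the facet characterization, while the paper appeals to $2$-connectivity of elementary graphs; your extra bookkeeping on reductions at reducible nodes just makes explicit what the paper takes for granted.
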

\begin{proof}
  Let $G$ be a connected irreducible graph representing $F$. By the above, $G$ has a facet defining
  set $C$ of size one. Hence $H:=G-C$ is connected, as $G$ is $2$-connected, and the facet defined
  by $H$ is not a product.
\end{proof}

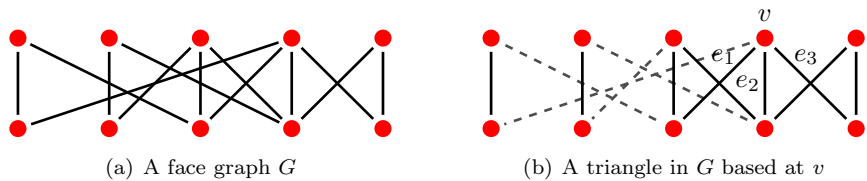
\begin{figure}[b]
  \centering
  \subfigure[A face graph $G$]{
    \label{fig:trianglegraphG}
    \begin{tikzpicture}[scale=1.2]
      \foreach \x in {0,1,2,3,4} {
        \coordinate (u\x) at (\x,1);
        \coordinate (v\x) at (\x,0);
      }
 
      \coordinate (u-1) at (-.5,1);
       
      \draw [line width=1pt] (u0) -- (v0);
      \draw [line width=1pt] (u0) -- (v2);
      \draw [line width=1pt] (u1) -- (v1);
      \draw [line width=1pt] (u1) -- (v3);
      \draw [line width=1pt] (u2) -- (v1);
      \draw [line width=1pt] (u2) -- (v2);
      \draw [line width=1pt] (u2) -- (v3);
      \draw [line width=1pt] (u3) -- (v0);
      \draw [line width=1pt] (u3) -- (v2);
      \draw [line width=1pt] (u3) -- (v3);
      \draw [line width=1pt] (u3) -- (v4);
      \draw [line width=1pt] (u4) -- (v3);
      \draw [line width=1pt] (u4) -- (v4);
      \foreach \x in {0,1,2,3,4} {
        \draw [color=white,fill=red,line width=2pt] (u\x) circle (3.5pt);
        \draw [color=white,fill=red,line width=2pt] (v\x) circle (3.5pt);
      }
    \end{tikzpicture}
  }
  \qquad
  \subfigure[A triangle in $G$ based at $v$]{
    \label{fig:triangleinG}
    \begin{tikzpicture}[scale=1.2]
      \foreach \x in {0,1,2,3,4} {
        \coordinate (u\x) at (\x,1);
        \coordinate (v\x) at (\x,0);
      }
 
      \coordinate (u-1) at (-.5,1);
       
      \draw [line width=1pt] (u0) -- (v0);
      \draw [line width=1pt,dashed,black!70] (u0) -- (v2);
      \draw [line width=1pt,dashed,black!70] (u1) -- (v3);
      \draw [line width=1pt,dashed,black!70] (u2) -- (v1);
      \draw [line width=1pt,dashed,black!70] (u3) -- (v0);
      \draw [line width=1pt] (u1) -- (v1);
      \draw [line width=1pt] (u2) -- (v2);
      \draw [line width=1pt] (u2) -- (v3);
      \draw [line width=1pt] (u3) -- (v2) node[midway,left,xshift=10pt,yshift=10pt] {$e_1$};
      \draw [line width=1pt] (u3) -- (v3) node[midway,left,xshift=2pt] {$e_2$};
      \draw [line width=1pt] (u3) -- (v4) node[midway,right,xshift=-10pt,yshift=10pt] {$e_3$};
      \draw [line width=1pt] (u4) -- (v3);
      \draw [line width=1pt] (u4) -- (v4);

      \foreach \x in {0,1,2,3,4} {
        \draw [color=white,fill=red,line width=2pt] (u\x) circle (3.5pt);
        \draw [color=white,fill=red,line width=2pt] (v\x) circle (3.5pt);
      }

      \draw [color=white,fill=red,line width=2pt] (u3) circle (3.5pt) node[above,black,yshift=2pt] {$v$};
    \end{tikzpicture}
  }
  \caption{A face graph and a triangle in that graph.}
  \label{fig:triangles}
\end{figure}
Brualdi and Gibson in their papers also obtained some results about edges and $2$-dimensional faces
of $\birkhoff$.
\begin{lemma}[Brualdi and Gibson~{\cite[Lemma 3.3 and Lemma 4.2]{BG77-2}}]
  \label{lemma:BG:egdes}
  Let $G$ be a connected face graph and $e_1, e_2, e_3$ edges with a common node $v$.
  \begin{enumerate}
  \item There are perfect matchings $M_1$ and $M_2$ each containing one of the edges such that
    $\face(M_1\cup M_2)$ is an edge in some $\birkhoff$.
  \item If there are perfect matchings $M_1, M_2$ with $e_i\in M_i$, $i=1,2$, such that
    $\face(M_1\cup M_2)$ is an edge, then there is a perfect matching $M_3$ containing $e_3$ such
    that $\face(M_1\cup M_2\cup M_3)$ is a triangle in some $\birkhoff$.
  \end{enumerate}
\end{lemma}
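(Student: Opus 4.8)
The plan rests on the description of low‑dimensional faces recalled just before the statement: if $N_1,N_2$ are perfect matchings of a face graph, then $N_1\cup N_2$ decomposes into the edges common to both together with a disjoint family of $N_1$‑alternating cycles, and $\face(N_1\cup N_2)$ is the product over these components, so its dimension equals the number of alternating cycles. In particular $\face(N_1\cup N_2)$ is an edge exactly when $N_1\triangle N_2$ is a single cycle. For part~(1) I would pick two of the three edges, say $e_1=va_1$ and $e_2=va_2$, whose endpoints satisfy $a_1\neq a_2$ because $G$ is simple. Choosing any matchings $N_1\ni e_1$ and $N_2\ni e_2$, the vertex $v$ is covered differently by $N_1,N_2$, so it lies on one cycle $C$ of the alternating decomposition of $N_1\triangle N_2$. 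Setting $M_1:=N_1$ and $M_2:=N_1\triangle C$ gives matchings with $e_1\in M_1$, $e_2\in M_2$ and $M_1\triangle M_2=C$ a single cycle, so $\face(M_1\cup M_2)$ is an edge. (This argument in fact works for any two of the three edges.)

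For part~(2) write $C=M_1\triangle M_2$, a single alternating cycle through $v$ carrying the $M_1$‑edge $e_1$ and the $M_2$‑edge $e_2$. The third edge $e_3=va_3$ lies in some perfect matching because $G$ is elementary, so I would grow an $M_1$‑alternating path that starts with the non‑matching edge $e_3$ out of $v$ and stop at the first vertex $y\neq v$ at which it meets $C$; from $y$ the alternation forces a unique direction along $C$, which I follow back to $v$. This closes a cycle $D$ meeting $C$ in exactly the path from $y$ to $v$. According to the parity with which $D$ returns to $v$, it arrives along $e_1$ or along $e_2$; in the first case $D$ is $M_1$‑alternating and I set $M_3:=M_1\triangle D$, in the second $D$ is $M_2$‑alternating and I set $M_3:=M_2\triangle D$. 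Either way $M_3$ is a perfect matching containing $e_3$, and one of $M_1\triangle M_3$, $M_2\triangle M_3$ equals the single cycle $D$.

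By construction $C\cap D$ is a single path, so $K:=C\cup D$ is a theta graph: its two degree‑three vertices are joined by the three internally disjoint paths $C\cap D$, $C\setminus D$, and $D\setminus C$. Outside $K$ the three matchings agree, hence the perfect matchings of $M_1\cup M_2\cup M_3$ are in bijection with those of $K$. Since $M_1$ restricts to a perfect matching of $K$, all three connecting paths have odd length, and a bipartite theta graph with odd arcs has exactly three perfect matchings, one for each path, obtained by routing both branch vertices through that path, the mixed choices being excluded on parity grounds. Thus $M_1\cup M_2\cup M_3$ has exactly the three vertices $M_1,M_2,M_3$; three distinct vertices of a polytope are affinely independent, so $\face(M_1\cup M_2\cup M_3)$ is a $2$‑simplex, \emph{i.e.} a triangle in the corresponding $\birkhoff$.

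The symmetric‑difference bookkeeping and the matching count in the theta graph are routine. The real obstacle is the construction of $D$: I must guarantee that the alternating path closes into a cycle meeting $C$ in a \emph{single} connected subpath, since otherwise $C$ and $D$ would overlap in several arcs, $C\cup D$ would acquire further perfect matchings, and $\face(M_1\cup M_2\cup M_3)$ would have dimension larger than two instead of being a triangle. This is precisely what the device of stopping at the first contact with $C$ and then riding $C$ back to $v$ is designed to secure; the accompanying care is to verify that this forced ride indeed terminates at $v$ and that pivoting from $M_1$ or from $M_2$ covers both parities of arrival.
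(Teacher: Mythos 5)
The paper offers no proof of this lemma at all: it is imported verbatim from Brualdi and Gibson~\cite[Lemmas 3.3 and 4.2]{BG77-2}, so there is no internal argument to compare against and your proof has to stand on its own. It essentially does. Part~(1) is the standard symmetric-difference switch and is correct. In part~(2), the reduction of $M_1\cup M_2\cup M_3$ to a theta graph $K=C\cup D$ plus isolated edges is valid (outside $K$ the three matchings coincide, and every vertex of $K$ is matched inside $K$ by each $M_i$), the three arcs of $K$ are indeed odd because $K$ is bipartite and carries a perfect matching, the parity exclusion of ``mixed'' routings is right, so $K$ has exactly three perfect matchings, and a face with exactly three vertices is a triangle. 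So the skeleton and all the claims you labelled routine check out.

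One step must be made precise, because as literally written it could fail: ``growing'' an $M_1$-alternating path out of $v$ greedily can dead-end at a vertex all of whose non-matching edges lead back to visited vertices, before it ever touches $C$. The fix --- which your appeal to a perfect matching $N_3\ni e_3$ gestures at but does not spell out --- is to take $Q$ to be the traversal, starting along $e_3$, of the cycle of $M_1\triangle N_3$ through $v$: that cycle returns to $v$ through the $M_1$-edge $e_1$, whose other endpoint lies on $C$, so a first contact $y\neq v$ with $C$ is guaranteed and the path can never get stuck. Note also that the first-contact property forces the last edge of $Q$ to be a non-matching edge (otherwise it would be the $M_1$-edge of $C$ at $y$, putting the previous vertex on $C$), after which the ride along $C$ continued $M_1$-alternately is forced and, by the parity you invoke, arrives at $v$ along $e_1$; the other direction of the ride is $M_2$-alternating and arrives along $e_2$. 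So your ``arrives along $e_1$ or along $e_2$'' case split really corresponds to the two choices of direction, both of which work; the claim that the direction is \emph{uniquely} forced is slightly off, but harmless.
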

See \autoref{fig:triangles} for an example. More generally, the union of any two perfect matchings
in a face graph is the disjoint union of single edges and cycles. Hence, the minimal face containing
a given pair of vertices is always a cube of some dimension.

Note, that \autoref{lemma:BG:egdes} ensures for any two edges sharing a node the existence of two
perfect matchings containing them that form an edge. Hence, any three edges with a common node
define at least one triangle in the polytope. This implies that the only triangle free faces of
$\birkhoff$ are cubes~\cite[Thm.~4.3]{BG77-3}.
  
In fact, any vertex of a face $F$ of $\birkhoff$ is incident to at least one triangle, unless $F$ is
a cube. Theorem 4.4 of \cite{BG77-2} furthermore tells us that the induced graph of the neighborhood
of any vertex in the polytope graph has $k$ components if and only if the polytope is a $k$-fold
product. Note that one direction is trivial. If the face is a product, then the union of the perfect
matchings of all neighbors of a vertex is already the graph of the face.

Brualdi and Gibson\cite[Thm.~3.3]{BG77-3} showed that a $d$-dimensional face $F:=\face(G)$
corresponding to an irreducible face graph $G$ has at most $3(d-1)$ facets, which is linear in
$d$. Further, if $F$ has exactly $3(d-1)$ facets, then $G$ is a $3$-regular bipartite graph on $d-1$
vertices. Conversely, it is, however, not true that any graph on $d-1$ nodes with constant degree
$3$ defines a face with $3(d-1)$ facets. See~\autoref{fig:notall3dm1} for an example.
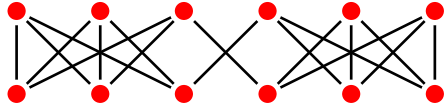
\begin{figure}[t]
  \centering
  \begin{tikzpicture}[scale=1.1]
    \foreach \x in {0,1,2,3,4,5} {
      \coordinate (u\x) at (\x,1);
      \coordinate (v\x) at (\x,0);
    }
    
    \draw [line width=1pt] (u0) -- (v0);
    \draw [line width=1pt] (u0) -- (v1);
    \draw [line width=1pt] (u0) -- (v2);
    \draw [line width=1pt] (u1) -- (v0);
    \draw [line width=1pt] (u1) -- (v1);
    \draw [line width=1pt] (u1) -- (v2);
    \draw [line width=1pt] (u2) -- (v0);
    \draw [line width=1pt] (u2) -- (v1);
    \draw [line width=1pt] (u2) -- (v3);
    \draw [line width=1pt] (u3) -- (v2);
    \draw [line width=1pt] (u3) -- (v4);
    \draw [line width=1pt] (u3) -- (v5);
    \draw [line width=1pt] (u4) -- (v3);
    \draw [line width=1pt] (u4) -- (v4);
    \draw [line width=1pt] (u4) -- (v5);
    \draw [line width=1pt] (u5) -- (v3);
    \draw [line width=1pt] (u5) -- (v4);
    \draw [line width=1pt] (u5) -- (v5);
    \foreach \x in {0,1,2,3,4,5} {
      \draw [color=white,fill=red,line width=2pt] (u\x) circle (4pt);
      \draw [color=white,fill=red,line width=2pt] (v\x) circle (4pt);
    }
  \end{tikzpicture}
  \caption{A $3$-regular graph for a $7$-dimensional face with $17\, <\, 3(7-1)$ facets}
  \label{fig:notall3dm1}
\end{figure}

\section{Face Graphs with Many Nodes}
\label{sec:manynodes}

Let $\lattice$ be the combinatorial type of a face of a Birkhoff polytope. The \emph{Birkhoff
  dimension} $\bdim(\lattice)$ of $\lattice$ is the smallest $n$ such that $\lattice$ is the
combinatorial type of some face of $\birkhoff$. It follows from \autoref{thm:billera} that
$\bdim(\lattice)\le 2d$ for a combinatorial type $\lattice$ of a $d$-dimensional face.  In this
section, we will study some properties of combinatorial types $\lattice$ of $d$-dimensional faces
with $\bdim(\lattice)\ge d$. In particular we will completely characterize those with
$\bdim(\lattice)\ge 2d-3$.

\subsection{Wedges}
\label{subsec:wedges}

In this section we will show that most faces of $\birkhoff$ are wedges over lower dimensional
faces. The following main theorem characterizes graphs that correspond to wedges.
\begin{theorem}
  Let $G$ be a face graph with $n\ge 3$ nodes in the upper layer and two connected adjacent nodes
  $u$ and $v$ of degree $2$. Let $G'$ be the graph obtained by attaching a path of length $3$ to $u$
  and $v$.  Then $G$ is a face graph and the associated face is a wedge over the face of $G$.
\end{theorem}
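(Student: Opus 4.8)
The plan is to prove the statement geometrically, by exhibiting an explicit affine isomorphism between $\face(G')$ and a wedge of $\face(G)$, rather than comparing face lattices directly. Write $e=(u,v)$ for the edge joining the two degree-$2$ nodes, say with $u$ in the upper and $v$ in the lower layer; let $b$ be the second neighbour of $u$ and $a$ the second neighbour of $v$, and let $u_0$ (upper) and $v_0$ (lower) be the two internal nodes of the attached path, so that the edges added to form $G'$ are $(u,v_0)$, $(u_0,v_0)$ and $(u_0,v)$. (Equivalently, $G'$ is the resolution $\res(\ov G)$ of the multigraph $\ov G$ obtained from $G$ by doubling $e$, so by \autoref{prop:isomgraphs} one could instead analyse $\ov G$; this is the conceptual reason a wedge appears.) I claim that $\face(G')$ is the wedge of $\face(G)$ over the face $F:=\{x\in\face(G)\mid x_{uv}=0\}=\face(M_{\neg e})$, taken with the natural supporting functional $x\mapsto -x_{uv}$ and right-hand side $0$. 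Note that $F$ is proper and non-empty, since elementarity of $G$ forces both $M_e$ and $M_{\neg e}$ to be non-empty.

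First I would confirm that $G'$ is a face graph. Every edge of $G$ lies in some perfect matching $M$ of $G$, and $M\cup\{(u_0,v_0)\}$ is a perfect matching of $G'$; this covers all old edges together with the middle edge $(u_0,v_0)$. The remaining two path edges $(u,v_0)$ and $(u_0,v)$ are covered by choosing a perfect matching of $G$ through $e$ and replacing its edge $(u,v)$ by the pair $(u,v_0),(u_0,v)$.

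The heart of the argument is the construction of the map $\Phi\colon\face(G')\to\R^{n\times n}\times\R$. For $y\in\face(G')$, the row-sum and column-sum equations at the two new nodes read $y_{uv_0}+y_{u_0v_0}=1$ and $y_{u_0v_0}+y_{u_0v}=1$, which force $y_{uv_0}=y_{u_0v}=:\lambda$ and $y_{u_0v_0}=1-\lambda$; the equations at $u$ and $v$ then give $y_{uv}+y_{ub}=y_{uv}+y_{av}=1-\lambda$. I set $x_{uv}:=y_{uv}+\lambda$, $x_{ij}:=y_{ij}$ on all other edges of $G$, and $\Phi(y):=(x,\lambda)$. Checking the row and column sums shows that $x$ is doubly stochastic and supported on $G$, so $x\in\face(G)$, while $\lambda\ge0$ and $\lambda\le x_{uv}$ because $y_{uv}\ge0$. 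Since for this functional the wedge height $d-\langle c,x\rangle$ equals $0-(-x_{uv})=x_{uv}$, the above says exactly that $\Phi$ maps $\face(G')$ into $\wedge_F(\face(G))=\{(x,x_0)\mid x\in\face(G),\ 0\le x_0\le x_{uv}\}$.

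Finally I would check surjectivity by inverting $\Phi$: given $(x,x_0)$ in the wedge, put $y_{uv}:=x_{uv}-x_0$, $y_{uv_0}:=y_{u_0v}:=x_0$, $y_{u_0v_0}:=1-x_0$, and $y_{ij}:=x_{ij}$ elsewhere; the inequalities $0\le x_0\le x_{uv}\le1$ give non-negativity and the four equations above give double stochasticity on $G'$. As $\Phi$ and its inverse are restrictions of linear maps, $\face(G')$ and $\wedge_F(\face(G))$ are affinely, hence combinatorially, isomorphic. I expect the only subtle point to be the identification of the wedge functional: one has to recognise that the mass $\lambda$ routed through the new path is precisely what converts the wedge height $x_0$ into the Birkhoff coordinate $x_{uv}$, so that the wedge constraint $0\le x_0\le x_{uv}$ becomes the single non-negativity constraint $y_{uv}\ge0$ in $G'$. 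Everything else is a routine verification of stochasticity.
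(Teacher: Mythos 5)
Your proof is correct, but it follows a genuinely different route from the paper. The paper argues purely combinatorially and by induction on dimension: it takes $R=\face(M_{\neg e_1})$, exhibits two facets of $F'=\face(G')$ (the subgraphs $G'_1$ and $G'_2$) that are copies of $F=\face(G)$ meeting in $R$, and then matches every remaining facet of $F'$ with either a prism over a facet of $F$ or, by the induction hypothesis, a wedge of a facet of $F$ over a face of $R$, together with the converse correspondence; this is a face-lattice argument resting on the recursive facet structure of wedges. You instead work geometrically with the Birkhoff--von Neumann description of $\face(G)$ as the doubly stochastic matrices supported on $G$, route the mass $\lambda=y_{uv_0}=y_{u_0v}$ through the attached path, and produce an explicit affine isomorphism onto $\{(x,x_0)\mid x\in\face(G),\ 0\le x_0\le x_{uv}\}$, which is exactly $\wedge_R(\face(G))$ for the functional $-x_{uv}\le 0$ defining $R=\face(M_{\neg e})$; your computations of the row and column sums at $u$, $v$, $u_0$, $v_0$, and of the two-sided inverse, are all correct, as is the preliminary check that $G'$ is elementary. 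Your approach buys several things: it avoids induction entirely, it does not need the (implicit in the paper) fact that the facet-matching pattern characterizes wedges, and it yields the stronger conclusion that $\face(G')$ is \emph{affinely}, not merely combinatorially, equivalent to the wedge --- which is in the spirit of the Brualdi--Gibson conjecture quoted in \autoref{sec:irred}; your observation that $G'=\res(\ov G)$ for the multigraph $\ov G$ obtained by doubling $e$ also explains conceptually, via \autoref{prop:isomgraphs}, why a wedge must appear. What the paper's proof buys in exchange is an explicit description of how each facet of the wedge arises from a facet of $F$ (as a prism or as a smaller wedge), information that stays within the graph-theoretic framework used throughout the rest of the paper.
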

\begin{proof}
  $G'$ is clearly a face graph.  We prove the theorem by induction over the dimension. The claim is
  true if $G$ is the unique reduced graph on four nodes and four edges.

  In the following we assume that the claim is true for face graphs defining a $(d-1)$-dimensional
  face of $\birkhoff$.

  Let $F$ be the face of $G$ and $F'$ that of $G'$.  Let $u'$ and $v'$ be the two nodes added in
  $G'$ and $e_1=(u,v)$, $e_2=(u',v')$, $f_1=(u,v')$, and $f_2=(u',v)$. See also
  \autoref{fig:genericwegde}. Let $G_1$ be the face graph of all perfect matchings in $G$ that do
  not contain $e_1$ (\emph{i.e.}, the union of the perfect matchings in $M_{\neg e_1}$), and $R$ the
  associated face of $F$ (see~\autoref{fig:wegde}\subref{fig:bf:wegde:G} and
  \autoref{fig:wegde}\subref{fig:bf:wegde:Gone}). As $G$ has at least three nodes in each layer,
  this is a nonempty face. We claim that $F'=\wedge_R(F)$.

  To show that $F'$ is a wedge over $F$ we have to show that $F'$ has two facets $F_1, F_2$
  isomorphic to $F$ that meet in a face isomorphic to $R$, such that any other facet of $F'$ is
  either
  \begin{enumerate}
  \item a prism over a facet of $F$, or
  \item a wedge of a facet of $F$ at a face of $R$,
  \end{enumerate}
  and, conversely, any facet of $F$ (except possibly $R$) is used in one of these two cases.

  Let $G'_1$ be the graph obtained from $G$ by adding $u'$, $v'$ together with $e_2$, and $G'_2$ the
  graph obtained by removing $e_1$ and adding $u'$, $v'$ together with $e_2$, $f_1$ and $f_2$. See
  \autoref{fig:wegde}\subref{fig:bf:wegde:Gpone} and
  \autoref{fig:wegde}\subref{fig:bf:wegde:Gptwo}. Both graphs are subgraphs of $G'$ and clearly
  define facets combinatorially isomorphic to $F$ that intersect in $R$.

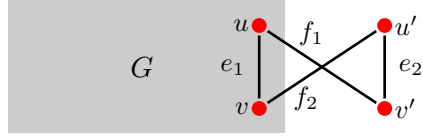
\begin{figure}[t]
  \centering
  \begin{tikzpicture}[scale=1.1]
    \coordinate (g111) at (0,-.3);
    \coordinate (g112) at (3.3,-.3);
    \coordinate (g121) at (0,1.3);
    \coordinate (g122) at (3.3,1.3);

    \coordinate (v) at (3,0);
    \coordinate (u) at (3,1);
    \coordinate (vp) at (4.5,0);
    \coordinate (up) at (4.5,1);

    \coordinate (c) at (1.6,.5);

    \coordinate (g111b) at (0,-.3);
    \coordinate (g112b) at (2,-.3);

    \coordinate (g1) at (1,-1);
    \coordinate (g2) at (4.5,-1);
    \coordinate (g3) at (8,-1);

    \fill[color=black!20] (g111) -- (g112) -- (g122) -- (g121) -- cycle;

    \node at (c) {\large$G$};

    \draw[black,line width=1pt] (u) -- (vp)  node [black,midway,left=6pt,below=3pt,line width=2pt] {$f_2$};
    \draw[black,line width=1pt] (v) -- (up)  node [black,midway,left=4pt,above=4pt,line width=2pt] {$f_1$};
    \draw[black,line width=1pt] (vp) -- (up) node [black,midway,right=1pt,line width=2pt] {$e_2$};
    \draw[black,line width=1pt] (v) -- (u)   node [black,midway,left=1pt,line width=2pt] {$e_1$};

    \draw[color=black!20,fill=red,line width=1pt] (u) circle (3pt) node[black,left] {$u$};
    \draw[color=black!20,fill=red,line width=1pt] (v) circle (3pt) node[black,left] {$v$};
    \draw[color=white,fill=red,line width=1pt] (up) circle (3pt) node[black,right] {$u'$};
    \draw[color=white,fill=red,line width=1pt] (vp) circle (3pt) node[black,right] {$v'$};
  \end{tikzpicture}
  \caption{A wedge over a face graph $G$.}
  \label{fig:genericwegde}
\end{figure}
\begin{figure}[b]
  \centering
  \subfigure[A face graph $G$    \label{fig:bf:wegde:G}]{
    \begin{tikzpicture}
      \foreach \x in {0,1,2,3} {
        \coordinate (u\x) at (\x,1);
        \coordinate (v\x) at (\x,0);
      }
      
      \coordinate (u-1) at (-1.3,1);
      \coordinate (u4) at (4.3,1);
      
      \draw [line width=1pt] (u0) -- (v0);
      \draw [line width=1pt] (u0) -- (v1);
      \draw [line width=1pt] (u1) -- (v0);
      \draw [line width=1pt] (u1) -- (v1);
      \draw [line width=1pt] (u1) -- (v2);
      \draw [line width=1pt] (u2) -- (v1);
      \draw [line width=1pt] (u2) -- (v2);
      \draw [line width=1pt] (u2) -- (v3);
      \draw [line width=1pt] (u3) -- (v0);
      \draw [line width=1pt] (u3) -- (v1);
      \draw [line width=1pt] (u3) -- (v2);
      \draw [line width=1pt] (u3) -- (v3);
      \draw[black,line width=1pt] (v3) -- (u3)   node [black,midway,right=1pt,line width=2pt] {$e_1$};
      
      \foreach \x in {0,1,2,3} {
        \draw [color=white,fill=red,line width=2pt] (u\x) circle (4pt);
        \draw [color=white,fill=red,line width=2pt] (v\x) circle (4pt);
      }
      \draw [color=white,line width=2pt] (u-1) circle (4pt);
      \draw [color=white,line width=2pt] (u4) circle (4pt);
      \draw[color=white,fill=red,line width=1pt] (u3) circle (3pt) node[black,right=1pt,above] {$u\phantom{'}$};
      \draw[color=white,fill=red,line width=1pt] (v3) circle (3pt) node[black,right=1pt,below] {$v\phantom{'}$};
    \end{tikzpicture}
  }
  \subfigure[The face graph $G_1$\label{fig:bf:wegde:Gone}]{
    \begin{tikzpicture}
      \foreach \x in {0,1,2,3} {
        \coordinate (u\x) at (\x,1);
        \coordinate (v\x) at (\x,0);
      }
      
      \coordinate (u-1) at (-1.3,1);
      \coordinate (u4) at (4.3,1);
      
      \draw [line width=1pt] (u0) -- (v0);
      \draw [line width=1pt] (u0) -- (v1);
      \draw [line width=1pt] (u1) -- (v0);
      \draw [line width=1pt] (u1) -- (v1);
      \draw [line width=1pt] (u1) -- (v2);
      \draw [line width=1pt] (u2) -- (v1);
      \draw [line width=1pt] (u2) -- (v2);
      \draw [line width=1pt] (u2) -- (v3);
      \draw [line width=1pt] (u3) -- (v0);
      \draw [line width=1pt] (u3) -- (v1);
      \draw [line width=1pt] (u3) -- (v2);
      
      \foreach \x in {0,1,2,3} {
        \draw [color=white,fill=red,line width=2pt] (u\x) circle (4pt);
        \draw [color=white,fill=red,line width=2pt] (v\x) circle (4pt);
      }
      \draw [color=white,line width=2pt] (u-1) circle (4pt);
      \draw [color=white,line width=2pt] (u4) circle (4pt);
      \draw[color=white,fill=red,line width=1pt] (u3) circle (3pt) node[black,right=1pt,above] {$u\phantom{'}$};
      \draw[color=white,fill=red,line width=1pt] (v3) circle (3pt) node[black,right=1pt,below] {$v\phantom{'}$};
    \end{tikzpicture}
  }
  \subfigure[The face graph $G'_1$\label{fig:bf:wegde:Gpone}]{
    \begin{tikzpicture}
      \foreach \x in {0,1,2,3,4} {
        \coordinate (u\x) at (\x,1);
        \coordinate (v\x) at (\x,0);
      }
      
      \coordinate (u-1) at (-1.3,1);
      \coordinate (u5) at (5.3,1);
      
      \draw [line width=1pt] (u0) -- (v0);
      \draw [line width=1pt] (u0) -- (v1);
      \draw [line width=1pt] (u1) -- (v0);
      \draw [line width=1pt] (u1) -- (v1);
      \draw [line width=1pt] (u1) -- (v2);
      \draw [line width=1pt] (u2) -- (v1);
      \draw [line width=1pt] (u2) -- (v2);
      \draw [line width=1pt] (u2) -- (v3);
      \draw [line width=1pt] (u3) -- (v0);
      \draw [line width=1pt] (u3) -- (v1);
      \draw [line width=1pt] (u3) -- (v2);
      \draw[black,line width=1pt] (v3) -- (u3)   node [black,midway,right=1pt,line width=2pt] {$e_1$};
      \draw[black,line width=1pt] (v4) -- (u4)   node [black,midway,right=1pt,line width=2pt] {$e_2$};
      
      \foreach \x in {0,1,2,3,4} {
        \draw [color=white,fill=red,line width=2pt] (u\x) circle (4pt);
        \draw [color=white,fill=red,line width=2pt] (v\x) circle (4pt);
      }
    \draw[color=white,fill=red,line width=1pt] (u3) circle (3pt) node[black,right=1pt,above] {$u\phantom{'}$};
    \draw[color=white,fill=red,line width=1pt] (v3) circle (3pt) node[black,right=1pt,below] {$v\phantom{'}$};
    \draw[color=white,fill=red,line width=1pt] (u4) circle (3pt) node[black,right=1pt,above] {$u'$};
    \draw[color=white,fill=red,line width=1pt] (v4) circle (3pt) node[black,right=1pt,below] {$v'$};
    \end{tikzpicture}
  }
  \qquad
  \subfigure[The face graph $G'_2$\label{fig:bf:wegde:Gptwo}]{
    \begin{tikzpicture}
      \foreach \x in {0,1,2,3,4} {
        \coordinate (u\x) at (\x,1);
        \coordinate (v\x) at (\x,0);
      }
      
      \coordinate (u-1) at (-1.3,1);
      \coordinate (u5) at (5.3,1);
      
      \draw [line width=1pt] (u0) -- (v0);
      \draw [line width=1pt] (u0) -- (v1);
      \draw [line width=1pt] (u1) -- (v0);
      \draw [line width=1pt] (u1) -- (v1);
      \draw [line width=1pt] (u1) -- (v2);
      \draw [line width=1pt] (u2) -- (v1);
      \draw [line width=1pt] (u2) -- (v2);
      \draw [line width=1pt] (u2) -- (v3);
      \draw [line width=1pt] (u3) -- (v0);
      \draw [line width=1pt] (u3) -- (v1);
      \draw [line width=1pt] (u3) -- (v2);
      \draw[black,line width=1pt] (u3) -- (v4)   node [black,midway,left=1pt,above=3pt,line width=2pt] {$f_1$};
      \draw[black,line width=1pt] (u4) -- (v3)   node [black,midway,right=2pt,below=2pt,line width=2pt] {$f_2$};
      \draw[black,line width=1pt] (v4) -- (u4)   node [black,midway,right=1pt,line width=2pt] {$e_2$};
      
      \foreach \x in {0,1,2,3,4} {
        \draw [color=white,fill=red,line width=2pt] (u\x) circle (4pt);
        \draw [color=white,fill=red,line width=2pt] (v\x) circle (4pt);
      }
    \draw[color=white,fill=red,line width=1pt] (u3) circle (3pt) node[black,right=1pt,above] {$u\phantom{'}$};
    \draw[color=white,fill=red,line width=1pt] (v3) circle (3pt) node[black,right=1pt,below] {$v\phantom{'}$};
    \draw[color=white,fill=red,line width=1pt] (u4) circle (3pt) node[black,right=1pt,above] {$u'$};
    \draw[color=white,fill=red,line width=1pt] (v4) circle (3pt) node[black,right=1pt,below] {$v'$};
    \end{tikzpicture}
  }
  \caption{The various face graphs involved in the wedge construction}
  \label{fig:wegde}
\end{figure}
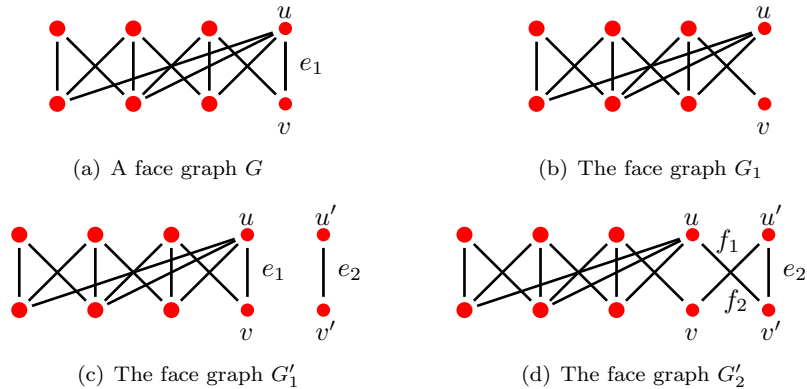
  Let $J'$ be a subgraph of $G'$ that defines a facet of $F'$. There are three possibilities for
  edges contained in $G'$ but not in $J'$:
  \begin{enumerate}
  \item both $f_1$ and $f_2$ are missing, or
  \item $e_1$ is missing, or
  \item $e_1$, $e_2$, $f_1$, and $f_2$ are present and some other edges are missing.
  \end{enumerate}
  The first two cases are the two copies of $F$. In the last case, if $u, u', v$ and $v'$ form a
  connected component of $J$, then we have a prism over a facet of $F$ sharing no vertex with $R$,
  and a wedge over a face of $R$ by induction otherwise.

  Conversely, let $K$ be the graph of a facet $S$ of $F$. If $e_1$ is missing in $K$, then $R$ is a
  facet and corresponds to $K$. So we assume that $e_1$ is present in $K$. If one of $u$, $v$ is
  connected to some other node in $K$ (and, thus, necessarily also the other), then the wedge of $K$
  over the intersection of $S$ and $R$ is contained in $G'$. If $u, v$ form a connected component,
  then $S$ and $R$ are disjoint and the prism over $S$ is contained in $F'$.
\end{proof}
This gives a first classification of combinatorial types with Birkhoff dimension at least $d$.
\begin{corollary}\label{thm:Wedges}
  Let $\lattice$ be the combinatorial type of a $d$-dimensional face of some $\birkhoff$. If
  $\bdim(\lattice)\ge d$, then $\lattice$ is a wedge or a product.
\end{corollary}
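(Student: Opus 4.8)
The plan is to reduce at once to the connected case and then to realise $\lattice$ in a representation that has the exact shape produced by the preceding theorem. If $\lattice$ is a product we are done, so assume it is not. By the product characterisation recalled after \autoref{prop:isomgraphs} (see \cite[Cor.~4.7]{BG77-2}) every face graph of $\lattice$ is then connected, and I would fix a connected irreducible representation $G$ realising $n:=\bdim(\lattice)\ge d$ nodes in each layer. By \eqref{eq:dimension} it has $m=2n+d-1$ edges, so the total excess degree $\sum_{w}(\deg w-2)=2m-4n=2d-2$ is distributed over both layers; since $n\ge d$, each layer carries at least $n-d+1\ge 1$ nodes of degree $2$.

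First I would use the partner structure to locate a candidate ear. By \autoref{lemma:partnerlemma} every minimal node $v'$ has a partner $v$, and $v',v$ together with their two common neighbours span a $4$-cycle; by \autoref{cor:partnerbound}, outside the exceptional configuration in which a single node is the partner of a whole layer, these partners and minimal nodes cannot all coincide, so several such $4$-cycles are available. The target is a representation in which one of them is an \emph{attached} ear in the sense of the preceding theorem: two degree-$2$ nodes $u',v'$ on opposite layers joined by $e_2=(u',v')$ and closed through a single edge $e_1=(u,v)$ whose endpoints have degree $3$. Deleting $u',v'$ (hence the path $f_1,e_2,f_2$ while keeping $e_1$) then leaves a connected face graph $H$ in which $u,v$ are two adjacent degree-$2$ nodes, and $G$ is precisely the graph obtained from $H$ by attaching a path of length $3$. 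The preceding theorem then gives $\lattice=\face(G)=\wedge_{R}\face(H)$, a wedge. When the chosen representation does not already display such an ear, I would manufacture one by a resolution, which by \autoref{prop:isomgraphs} does not change the combinatorial type: resolving a suitable edge incident to a high-degree partner splits off a degree-$3$ base edge $e_1$ and turns the $4$-cycle into an attached ear, producing a (non-minimal) representation of the required form.

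The hard part will be exactly this last step, and it is where the hypothesis $\bdim(\lattice)\ge d$ must do its work. The minimal representation need not contain two adjacent degree-$2$ nodes at all — the tetrahedron is realised by $K_{3,3}$ minus an edge, whose two degree-$2$ nodes lie in a common $4$-cycle but are not adjacent — so the wedge generally has to be exhibited only after a resolution, and one must prove that the degree-and-partner data forced by $n\ge d$ always admit a resolution of the form above while keeping $H$ connected, elementary, and with at least three upper nodes. I expect to organise this as an induction on $d$: peel one ear to pass to a $(d-1)$-dimensional face $\face(H)$, use \autoref{cor:non_prod_faces} to keep control of whether $H$ is a product (handling the product case directly), and track the two facet types of \autoref{prop:BG:facets} so that the attached ear persists through the induction. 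The genuinely delicate point the resolution is designed to fix is ensuring that $u$ and $v$ drop all the way to degree $2$ after peeling, rather than merely to degree $3$ as for the naive ear in the minimal tetrahedron representation; verifying this, together with the node bound \autoref{cor:irredNodeBound}, is what I would concentrate the argument on.
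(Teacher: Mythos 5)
You follow the paper's own route (reduce to the connected case, use $n\ge d$ to force degree-$2$ nodes in each layer, and exhibit the graph as a base graph with an attached ear so that the theorem preceding \autoref{thm:Wedges} applies), and you correctly isolate the delicate point that the paper passes over in silence: that theorem needs the base edge $e_1=(u,v)$ to have both endpoints of degree exactly $3$ in the ambient graph. The gap is that the step you propose to secure this fails. Replacing an edge by a path of length $3$ leaves the degree of every pre-existing node unchanged (the path still emanates from the same two endpoints), so it can never turn a degree-$4$ partner into a degree-$3$ one; and it deletes the edge it subdivides, so its endpoints are no longer adjacent and no $4$-cycle is created. The only move in your toolkit that produces the ear configuration is resolving a \emph{doubled} edge of a reduced multi-graph, and then the two base nodes keep whatever degrees they had in the multi-graph.

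This is not a hypothetical worry. Take $\lattice$ to be the join of an edge and a square, subfigure (b) of \autoref{fig:Graphs4DNonProd}: it is not a product, $d=4$, and $\bdim(\lattice)=4$ (the only $4$-face of $\birkhoff[3]$ is $\birkhoff[3]$ itself, which has $9$ facets rather than $6$), so it lies squarely in the scope of your argument. In that realization the only degree-$2$ nodes are $u_0$ and $v_3$, and they are not adjacent, so there is no ear to begin with; complete reduction contracts both and yields the multi-graph with upper nodes $u_1,x$, lower nodes $w,v_2$, doubled pairs $(u_1,w)$, $(x,w)$, $(x,v_2)$ and one simple edge $(u_1,v_2)$, where $\deg u_1=\deg v_2=3$ but $\deg x=\deg w=4$. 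Every doubled pair here (and in either partial reduction) has an endpoint of degree $4$, so reducing and resolving never displays a $4$-cycle whose base edge has two degree-$3$ endpoints: your manufactured ear never materializes, and your fallback induction is only a sketch, not an argument. To close the proof one genuinely needs more than you (or, frankly, the paper's two-line proof) provide: either (i) the wedge theorem \emph{without} the degree hypothesis on $u,v$ --- which is true, since on $\face(G')$ the vertex equations at $u',v'$ force $x_{f_1}=x_{f_2}=1-x_{e_2}$, and the affine map $x\mapsto\bigl(x|_{E(H)}+x_{f_1}\mathbf{1}_{e_1},\,x_{f_1}\bigr)$ with $H=G'-\{u',v'\}$ identifies $\face(G')$ with $\left\{(y,t)\mid y\in\face(H),\ 0\le t\le y_{e_1}\right\}$, the wedge of $\face(H)$ over $R$ --- or (ii) an additional type-preserving move, namely splitting a degree-$4$ endpoint of a doubled pair into two nodes joined through a new degree-$2$ node (the inverse of reduction, legitimate by \autoref{prop:isomgraphs}); for the example above this produces a realization that does carry a genuine attached ear. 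Without (i) or (ii), the step your whole proposal leans on does not go through.
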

\begin{proof}
  Let $F$ be a face of some $\birkhoff$ with combinatorial type $\lattice$, and assume that $F$ is
  not a product. Let $G$ be a reduced graph representing $F$. Thus, $G$ is connected. Then
  $d=m-2n+1$ implies that each layer of $G$ has at least one node of degree $2$. By first completely
  reducing the graph $G$ and subsequently resolving any multiple edges (similar to the proof of
  \autoref{thm:billera}), we can assume that there are two adjacent nodes of degree $2$. Now we can
  use the previous theorem.
\end{proof}
In fact, every $d$-dimensional combinatorial type $\lattice$ with $\bdim(\lattice)\ge n$ is a wedge
if its graph has a component with at least three nodes in each layer, \emph{i.e.}, if the face is
not a cube.
\begin{theorem}\label{thm:StandardWedge}
  Let $F$ be a face of $\birkhoff$. Then any wedge of $F$ over a facet or the complement of a facet
  is also a face of some $\birkhoff[m]$, $m\ge n$.
\end{theorem}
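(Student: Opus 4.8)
The plan is to reduce both statements to the wedge construction of the preceding theorem, whose sole hypothesis is a pair of adjacent nodes of degree $2$. Represent $F$ by a connected face graph $G$ (if $G$ is disconnected then $F$ is a product and one wedges in the relevant component). By the first case of \autoref{prop:BG:facets} such a facet is cut out by a single edge $e=(a,b)$; writing $M_e$ and $M_{\neg e}$ for the matchings of $G$ using and avoiding $e$, this facet is $R=\face(M_{\neg e})$, and the face $\ov R=\face(M_e)$ on the parallel hyperplane $x_{ab}=1$ is the complement referred to in the statement. It therefore suffices, for each of $R$ and $\ov R$, to produce a face graph of $F$ containing an edge $e_1$ between two adjacent degree-$2$ nodes such that $\face(M_{\neg e_1})$ is the desired face: the preceding theorem then realises the wedge over $\face(M_{\neg e_1})$ as a face of some $\birkhoff[m]$, and since resolution does not change the combinatorial type (\autoref{prop:isomgraphs}) while the type of a wedge depends only on the pair of faces, this is the required wedge of $F$.

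For the complement $\ov R$ I would subdivide $e$ once, replacing it by a path $a\,p\,q\,b$ of length $3$ (a resolution, so $F$ is unchanged up to combinatorial type). The middle edge $\mu=(p,q)$ joins two adjacent nodes of degree $2$. A matching covers the interior $\{p,q\}$ either by $\mu$, forcing $a,b$ to be matched elsewhere, or by the two outer edges, matching $a,b$ inside the path; hence $\mu$ is used exactly on the matchings corresponding to $M_{\neg e}$, so $\face(M_{\neg\mu})\cong\face(M_e)=\ov R$. Applying the preceding theorem to $\mu$ yields $\wedge_{\ov R}(F)$.

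The facet $R$ itself is where I expect the genuine difficulty, and it is a parity phenomenon: after a single subdivision the new middle edge is used precisely when $e$ is \emph{not}, so the naive construction stubbornly produces the wedge over the complement rather than over $R$. The remedy is to subdivide $e$ into a path $a\,p_1\,q_1\,p_2\,q_2\,b$ of length $5$. Covering the four interior nodes admits exactly two patterns, namely $\{(p_1,q_1),(p_2,q_2)\}$, which matches $a,b$ outside and so corresponds to $M_{\neg e}$, and $\{(a,p_1),(q_1,p_2),(q_2,b)\}$, which corresponds to $M_e$. The central edge $e_1=(q_1,p_2)$ again joins two adjacent degree-$2$ nodes, but now it occurs only in the second pattern, so it is used exactly on the matchings corresponding to $M_e$, whence $\face(M_{\neg e_1})\cong\face(M_{\neg e})=R$. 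The preceding theorem applied to $e_1$ then delivers $\wedge_R(F)$. In both constructions the subdivided graph has at least three nodes per layer once $\dim F\ge 1$, so the hypotheses of the preceding theorem are met, and the two cases together prove the statement. (The argument is tied to facets of the first type in \autoref{prop:BG:facets}, which are exactly the single-edge cuts $\face(M_{\neg e})$; a facet of the second type is defined by a cyclic set of several edges and so is not of this form, and checking whether it can be presented as a single-edge cut after a suitable resolution is the one point I would need to settle separately.)
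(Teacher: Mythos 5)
Your two constructions are correct as far as they go, and they sit close to the paper's own proof: for the wedge over the \emph{complement} the paper does exactly what you do (replace $e$ by a path of length~$3$ and attach a new path of length~$3$ to the two middle nodes), while for the wedge over the facet itself the paper attaches the path of length~$3$ directly to the endpoints of $e$. Since those endpoints need not have degree~$2$, the paper's facet construction is not literally an instance of the preceding wedge theorem, whereas your length-$5$ subdivision and parity argument keeps every application of that theorem strictly within its stated hypotheses (adjacent degree-$2$ nodes); that is a genuine, if small, gain in rigor.

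The genuine gap is the one you flagged at the end, but you have misdiagnosed it. The statement quantifies over \emph{all} facets, and facets of the second type in \autoref{prop:BG:facets} are not excluded; however, they \emph{are} single-edge cuts in your sense, so no separate treatment is needed once you prove one fact you are missing. If $C=\{(u_1,v_2),(u_2,v_3),\dots,(u_k,v_1)\}$ is the facet defining set circularly connecting the components $S_1,\dots,S_k$ of $G-C$, then any perfect matching of $G$ using one edge of $C$ must use all of them: using $(u_1,v_2)$ covers $v_2$ externally, and since the layers of $S_2$ have equal size, some other node of $S_2$ must also be matched externally, which can only be $u_2$ via $(u_2,v_3)$, and so on around the cycle. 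Hence every perfect matching of $G$ uses either all of $C$ or none of it, so for any single $e\in C$ the matchings avoiding $e$ are exactly the matchings avoiding all of $C$; thus $\face(M_{\neg e})$ equals the facet $\face(G-C)$ and $\face(M_e)$ equals its complement, exactly as in your type-$1$ setup. With this observation your argument (subdivide $e$, parity, apply the preceding theorem) goes through verbatim for an arbitrary facet by choosing any $e\in C$ --- which is precisely how the paper phrases its proof. Without it, your proof covers only facets cut out by a single edge and is incomplete as written.
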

\begin{proof}
  Let $G$ be an irreducible face graph corresponding to $F$.  Let $E$ be a facet of $F$ with facet
  defining set $C$. Let $e$ be any edge in $C$. The wedge over $F$ is obtained by adding a path of
  length $3$ to the endpoints of $e$, and the wedge over the complement is obtained by first
  replacing $e$ by a path of length $3$ and then adding another path of length $3$ to the two new
  nodes. See \autoref{fig:corwegde} for an illustration of the two operations.
\end{proof}
\begin{figure}[t]
  \centering
  \subfigure[A face graph $G$ with a facet defining set $C$ of edges drawn dashed]{
    \begin{tikzpicture}
      \foreach \x in {0,1,2,3} {
        \coordinate (u\x) at (\x,1);
        \coordinate (v\x) at (\x,0);
      }
      
      \coordinate (u-1) at (-2.3,1);
      \coordinate (u4) at (6.3,1);
      
      \draw [line width=1pt] (u0) -- (v0);
      \draw [line width=1pt] (u0) -- (v1);
      \draw [line width=1pt] (u1) -- (v0);
      \draw [line width=1pt] (u1) -- (v1);
      \draw [line width=2pt,dashed] (u1) -- (v2);
      \draw [line width=2pt,dashed] (u2) -- (v1);
      \draw [line width=1pt] (u2) -- (v2);
      \draw [line width=1pt] (u2) -- (v3);
      \draw [line width=1pt] (u3) -- (v2);
      \draw [line width=1pt] (u3) -- (v3);
      
      \foreach \x in {0,1,2,3} {
        \draw [color=white,fill=red,line width=2pt] (u\x) circle (4pt);
        \draw [color=white,fill=red,line width=2pt] (v\x) circle (4pt);
      }
      \draw [color=white,line width=2pt] (u-1) circle (4pt);
      \draw [color=white,line width=2pt] (u4) circle (4pt);
    \end{tikzpicture}
  }

  \hspace*{-.5cm}
  \subfigure[The wedge over the facet defined by $C$]{
    \begin{tikzpicture}
      \foreach \x in {0,1,2,3,4} {
        \coordinate (u\x) at (\x,1);
        \coordinate (v\x) at (\x,0);
      }
      
      \coordinate (u-1) at (-1.3,1);
      \coordinate (u5) at (5.3,1);
      
      \draw [line width=1pt] (u0) -- (v0);
      \draw [line width=1pt] (u0) -- (v1);
      \draw [line width=1pt] (u1) -- (v0);
      \draw [line width=1pt] (u1) -- (v1);
      \draw [line width=1pt] (u1) -- (v2);
      \draw [line width=1pt] (u2) -- (v1);
      \draw [line width=1pt] (u2) -- (v2);
      \draw [line width=1pt] (u2) -- (v3);
      \draw [line width=1pt] (u3) -- (v2);
      \draw [line width=1pt] (u3) -- (v3);
      \draw [line width=2pt,dashed] (u2) -- (v4);
      \draw [line width=2pt,dashed] (u4) -- (v4);
      \draw [line width=2pt,dashed] (u4) -- (v1);
      
      \foreach \x in {0,1,2,3,4} {
        \draw [color=white,fill=red,line width=2pt] (u\x) circle (4pt);
        \draw [color=white,fill=red,line width=2pt] (v\x) circle (4pt);
      }
      \draw [color=white,line width=2pt] (u-1) circle (4pt);
      \draw [color=white,line width=2pt] (u5) circle (4pt);
    \end{tikzpicture}
  }
  \subfigure[The wedge over the complement of the facet defined by $C$]{
    \begin{tikzpicture}
      \foreach \x in {0,1,2,3,5} {
        \coordinate (u\x) at (\x,1);
        \coordinate (v\x) at (\x,0);
      }
      
      \coordinate (u-1) at (-1.3,1);
      \coordinate (u6) at (6.3,1);
      
      \draw [line width=1pt] (u0) -- (v0);
      \draw [line width=1pt] (u0) -- (v1);
      \draw [line width=1pt] (u1) -- (v0);
      \draw [line width=1pt] (u1) -- (v1);
      \draw [line width=1pt] (u1) -- (v2);
      \draw [line width=1pt] (u2) -- (v2);
      \draw [line width=1pt] (u2) -- (v3);
      \draw [line width=1pt] (u3) -- (v2);
      \draw [line width=1pt] (u3) -- (v3);

      \draw [line width=1pt] (u2) -- (v4);
      \draw [line width=1pt] (u4) -- (v4);
      \draw [line width=1pt] (u4) -- (v1);

      \draw [line width=2pt,dashed] (u4) -- (v5);
      \draw [line width=2pt,dashed] (u5) -- (v5);
      \draw [line width=2pt,dashed] (u5) -- (v4);
      
      \foreach \x in {0,1,2,3,4,5} {
        \draw [color=white,fill=red,line width=2pt] (u\x) circle (4pt);
        \draw [color=white,fill=red,line width=2pt] (v\x) circle (4pt);
      }
    \end{tikzpicture}
  }
  \caption{Wedge over a facet and its complement}
  \label{fig:corwegde}
\end{figure}
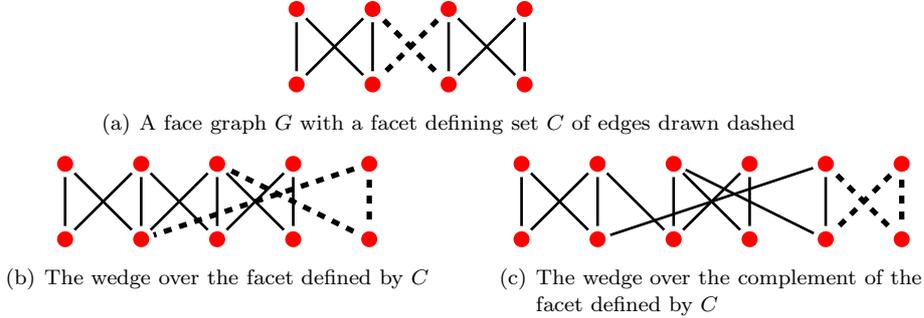

\subsection{Pyramids}

We want to discuss the structure of face graphs that correspond to pyramids. This will be important
for the classification of faces with large Birkhoff dimension. In particular, we will see that for
many faces $F$ of $\birkhoff$ the pyramid over $F$ is again face of a Birkhoff polytope.
\begin{lemma}
  Let $G$ be a connected face graph containing an edge $e$ that appears only in a single perfect
  matching $M$. Then $M$ defines an edge of $\face(G)$ with any other perfect matching in $G$.
\end{lemma}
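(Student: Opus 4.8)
The plan is to reduce the lemma to the combinatorial description of edges recorded earlier in the excerpt: a pair of perfect matchings $M_1,M_2$ of a face graph spans an edge of the corresponding face precisely when $M_1\cup M_2$ is a disjoint union of single edges together with a single cycle, equivalently when the symmetric difference $M_1\triangle M_2$ is one alternating cycle. So for a fixed perfect matching $M'\ne M$ of $G$ it suffices to prove that $M\triangle M'$ consists of exactly one cycle; then $\face(M\cup M')$ is an edge of $\face(G)$ and we are done.

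First I would record that, since $e$ lies in the unique perfect matching $M$, it lies in no other matching, so $e\notin M'$. Combined with $e\in M$ this gives $e\in M\triangle M'$. The symmetric difference of two perfect matchings of a bipartite graph is a vertex-disjoint union of even alternating cycles, so let $C$ be the cycle of $M\triangle M'$ that contains $e$.

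The heart of the argument is a switching step. Suppose, for contradiction, that $M\triangle M'$ contains a second cycle $C'$, necessarily vertex-disjoint from $C$. Form the matching $M''$ obtained from $M$ by replacing the edges of $M$ lying on $C'$ by the edges of $M'$ lying on $C'$. Since $C'$ is an $M$-alternating cycle, $M''$ is again a perfect matching of $G$. Because $C$ and $C'$ are vertex-disjoint and $e\in C$, the edge $e$ is untouched by the switch, so $e\in M''$; but $M''\ne M$, since $C'$ is nonempty and the switch changes the chosen edges along it. This exhibits a second perfect matching containing $e$, contradicting the hypothesis that $e$ appears in only one.

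Hence $M\triangle M'$ has exactly one cycle, so $M\cup M'$ is a disjoint union of single edges and a single cycle, and $\face(M\cup M')$ is an edge of $\face(G)$ for every perfect matching $M'\ne M$. I expect no genuine obstacle beyond invoking the edge-characterization correctly; the only point to state with care is that switching along a single cycle of the symmetric difference preserves the perfect-matching property while leaving the distinguished edge $e$ in place.
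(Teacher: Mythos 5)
Your proof is correct and is essentially the paper's own argument: the paper likewise assumes $M\cup M'$ has more than one cycle, notes $e$ must lie on one of them, and switches to the other matching's edges on the cycle(s) avoiding $e$ to produce a second perfect matching through $e$, contradicting uniqueness. Phrasing it via the symmetric difference and switching along a single extra cycle, as you do, is only a cosmetic variation.
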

\begin{proof}
  Suppose not.  Then there is a perfect matching $M'$ such that $M\cup M'$ contains more than one
  cycle $C_1, \ldots, C_k$. The edge $e$ is contained in such a cycle, as otherwise $M'$ would use
  $e$. Assume this is $C_1$.  However, using the edges of $M'$ in $C_2, \ldots, C_k$ and the edges
  of $M$ in $C_1$ defines another perfect matching $M''$ using $e$ and different from both $M$ and
  $M'$.  This is a contradiction to the uniqueness of $e$.
\end{proof}
\begin{theorem}
  Let $G$ be a connected irreducible face graph. Then $\face(G)$ is a pyramid if and only if $G$ has
  an edge $e$ that is contained in a unique perfect matching $M$.
\end{theorem}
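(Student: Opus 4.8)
The plan is to prove both implications directly from the geometry of the facet inequality $x_{ij}\ge 0$ attached to the edge $e=(i,j)$, invoking the facet characterization of \autoref{prop:BG:facets} only in the forward direction. The organizing observation is that, if $e$ joins node $i$ in the upper layer to node $j$ in the lower layer, then a perfect matching $M'$ of $G$ uses $e$ if and only if the corresponding vertex of $\face(G)$ satisfies $x_{ij}=1$, while every matching avoiding $e$ gives a vertex on the hyperplane $H:=\{x_{ij}=0\}$.

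For the direction ``$\Leftarrow$'', suppose $e$ lies in the unique perfect matching $M$. Then $M$ is the only vertex of $\face(G)$ with $x_{ij}=1$, and all remaining vertices lie on $H$. Hence $\face(G)=\conv\bigl(\{M\}\cup B\bigr)$, where $B:=\face(G)\cap H$ is the face collecting all vertices except $M$; it is nonempty, since $\face(G)$ has vertices other than $M$. As $M\notin H$ while the affine hull of $B$ lies inside $H$, adjoining $M$ raises the dimension by exactly one, so $\dim B=\dim\face(G)-1$, i.e.\ $B$ is a facet and $M$ the opposite apex. This is precisely a pyramid over $B$ with apex $M$. (This matches the preceding lemma, which already shows that $M$ forms an edge of $\face(G)$ with every other vertex, as an apex must.)

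For the direction ``$\Rightarrow$'', suppose $\face(G)$ is a pyramid with apex the vertex $M$ and base a facet $B$ containing every vertex except $M$. By \autoref{prop:BG:facets} the base has the form $B=\face(G-C)$ for a nonempty facet defining set $C$, and its vertices are exactly the perfect matchings of $G$ that avoid every edge of $C$. Thus a matching lies off the base precisely when it uses some edge of $C$; since $M$ is the only off-base vertex, the matchings using any edge of $C$ are exactly $\{M\}$. Picking any $e\in C$, every matching containing $e$ is off-base and hence equals $M$, and because $G$ is a face graph $e$ lies in at least one matching. Therefore $e$ is contained in the unique perfect matching $M$. This reasoning is uniform over both facet types of \autoref{prop:BG:facets}, so no case distinction on the shape of $C$ is required.

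The only step carrying genuine content is the forward appeal to \autoref{prop:BG:facets}: turning ``$M$ is the unique vertex off the base facet'' into a statement about a single edge relies on facets being cut out by removing facet defining sets, after which the conclusion is pure bookkeeping on which matchings survive. The backward direction is immediate once the functional $x_{ij}$ is identified; connectedness of $G$ enters only through the facet characterization, and irreducibility is not actually needed for the argument.
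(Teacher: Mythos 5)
Your argument is correct; the forward implication matches the paper's, but your converse takes a genuinely different route. In the forward direction both proofs exhibit the same face, namely the one carrying every vertex except $M$: the paper realizes it as the face of the subgraph formed by the union of all matchings other than $M$, while you realize it as $\face(G)\cap\{x_{ij}=0\}$; your explicit dimension count makes the step ``this face is a facet, hence we have a pyramid'' slightly more transparent, but the content is the same. For the converse, the paper works on the apex side and argues by contradiction: if every edge of the apex matching $M$ lay in a second matching $M_e$, then the union of the $M_e$ would be a face subgraph containing $M$, so the smallest face containing the base vertices $M_e$ would contain the apex, which is absurd. You work on the base side instead: the base facet is $\face(G-C)$ for a nonempty facet defining set $C$, the vertices off the base are exactly the matchings that use an edge of $C$, and since $M$ is the only such vertex, every $e\in C$ lies in $M$ and in no other matching. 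Your route is direct rather than by contradiction, and it identifies concrete witnesses (all edges of the facet defining set of the base); the price is an appeal to the Brualdi--Gibson characterization (\autoref{prop:BG:facets}), whereas the paper's argument needs only the subgraph--face correspondence and the notion of the smallest face containing a given vertex set, so it is more self-contained. In fact you need less than \autoref{prop:BG:facets}: knowing that the base is $\face(S)$ for some proper face subgraph $S\subsetneq G$ already yields a nonempty set $C$ of edges of $G$ not in $S$, and the same bookkeeping finishes the proof.
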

See \autoref{fig:edgeunique} for an example.
\begin{figure}[t]
  \centering
  \begin{tikzpicture}[scale=1.1]
    \foreach \x in {-1,0,1,2,3,4,5,6} {
      \coordinate (u\x) at (\x,1);
      \coordinate (v\x) at (\x,0);
    }

    \draw [line width=1pt] (u0) -- (v0);
    \draw [line width=1pt] (u0) -- (v1);
    \draw [line width=1pt] (u1) -- (v0);
    \draw [line width=1pt] (u1) -- (v1);
    \draw [line width=1pt] (u2) -- (v2);
    \draw [line width=1pt] (u2) -- (v3);
    \draw [line width=1pt] (u3) -- (v2);
    \draw [line width=1pt] (u3) -- (v3);
    \draw [line width=1pt] (u4) -- (v4);
    \draw [line width=1pt] (u4) -- (v5);
    \draw [line width=1pt] (u5) -- (v4);
    \draw [line width=1pt] (u5) -- (v5);

    \draw [line width=1pt] (u1) -- (v2);
    \draw [line width=1pt] (u3) -- (v4);
    \draw [line width=2pt] (u5) -- (v0);

    \foreach \x in {0,1,2,3,4,5} {
      \draw [color=white,fill=red,line width=2pt] (u\x) circle (4pt);
      \draw [color=white,fill=red,line width=2pt] (v\x) circle (4pt);
    }
    \draw [color=white,line width=2pt] (u-1) circle (4pt);
    \draw [color=white,line width=2pt] (u6) circle (4pt);
  \end{tikzpicture}
  \caption{The thick long edge is contained in only one perfect
  matching in the graph. The graph defines a pyramid over a $3$-cube.}
  \label{fig:edgeunique}
\end{figure}
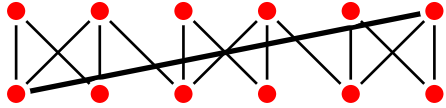
\begin{proof}
  If $G$ has such an edge, then the union of all perfect matchings in $G$ except $M$ defines a
  proper face $R$ of $\face(G)$ containing all but one vertex. Hence, $\face(G)$ must be a pyramid
  over $S$ with apex $M$.

  If $\face(G)$ is a pyramid, then let $M$ be the perfect matching corresponding to the apex. Assume
  by contradiction that any edge $e$ of $M$ is contained in some other perfect matching $M_e$
  different from $M$. Let $H$ be the subgraph defined by the union of these perfect matchings. Then
  $\face(H)$ is the smallest face $S$ of $\face(G)$ containing those vertices. But $H$ contains $M$,
  so $S$ contains the apex. This is a contradiction, as then $S$ is a pyramid with apex $M$ whose
  base already contains all vertices corresponding to the $M_e$.
\end{proof}
Let $G$ be a face graph with connected components $G_0, \ldots, G_{k-1}$. We define the
\emph{circular connection} $\circular(G)$ of $G$ in the following way.  For each $1\le i\le k$, let
$u^i$ be a node in the upper and $v^i$ a node in the lower layer of $G_i$. Then the nodes of
$\circular(G)$ are the disjoint union of the nodes of $G_i$, and the edges of $\circular(G)$ are
those of $G_i$ together with edges from $u^{i+1}$ to $v^i$ for $0\le i\le k-1$ (with indices taken
modulo $k$). See \autoref{fig:circularconnection} for an illustration. Note that the circular
connection is in general not a face graph. It is, if $G_i-\{u^i,v^i\}$ has a perfect matching for
all $i$. If the perfect matchings in $G_i-\{u^i,v^i\}$ are also unique, then the circular connection
is a face graph whose associated face is the pyramid over the face of $G$.  This motivates the
following definition.
\begin{definition}\label{def:pyramidal}
  Let $G$ be a face graph with connected components $G_0, \ldots, G_{k-1}$.  A choice $S(G):=\{u^0,
  v^0, \ldots, u^{k-1}, v^{k-1}\}$ of nodes $u^i, v^i\in G_i$ with $u^i$ in the upper and $v^i$ in
  the lower layer for $0\le i\le {k-1}$ is \emph{pyramidal} if the graph $G-S(G)$ has a unique
  perfect matching.
\end{definition}
\begin{corollary}
  Let $G$ be a connected irreducible face graph. If $G$ has a node $u\in U$ and $v\in V$ such that
  $(u,v)\not\in E$ and $G-\{u,v\}$ has a unique perfect matching, then $H:=G+\{(u,v)\}$ defines a
  face graph that corresponds to the pyramid over $\face(G)$.
\end{corollary}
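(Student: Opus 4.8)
The plan is to deduce this from the pyramid criterion established just above, applied not to $G$ but to $H=G+\{(u,v)\}$, after recognizing the new edge $e:=(u,v)$ as one lying in a unique perfect matching. First I would check that $H$ is a face graph. Every edge of $G$ already lies in some perfect matching of $G$, and such matchings remain perfect matchings of $H\supseteq G$; so it suffices to place $e$ in a perfect matching. Writing $M_0$ for the unique perfect matching of $G-\{u,v\}$, the set $M:=M_0\cup\{e\}$ covers every node of $H$ and is therefore such a matching. Hence every edge of $H$ is contained in a perfect matching, so $H$ is a face graph, and it is connected since $G$ is and $H\supseteq G$.

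Next I would show that $e$ lies in a \emph{unique} perfect matching of $H$. Any perfect matching containing $e$ matches $u$ to $v$ and restricts to a perfect matching of $H-\{u,v\}$. Deleting $u$ and $v$ also deletes $e$, so $H-\{u,v\}=G-\{u,v\}$, which by hypothesis has the unique perfect matching $M_0$; therefore $M=M_0\cup\{e\}$ is the only perfect matching of $H$ through $e$. To invoke the pyramid theorem as stated I also need $H$ irreducible: passing from $G$ to $H$ raises the degrees of $u$ and $v$ to at least $3$ and can only enlarge the neighborhoods and common neighborhoods of the other nodes, so every node irreducible in $G$ stays irreducible in $H$. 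The theorem then gives that $\face(H)$ is a pyramid whose apex is the vertex corresponding to $M$.

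It remains to identify the base with $\face(G)$. By the proof of the pyramid theorem the base is the face whose graph is the union of all perfect matchings of $H$ other than $M$. Since $M$ is the only perfect matching of $H$ using $e$, every other perfect matching of $H$ avoids $e$ and hence is a perfect matching of $H-e=G$; conversely each perfect matching of $G$ is one of $H$ that does not use $e$. Thus the perfect matchings of $H$ different from $M$ are exactly those of $G$, and their union is all of $G$ because $G$ is elementary. So the base equals $\face(G)$ and $\face(H)$ is the pyramid over $\face(G)$. All steps are short; the only point demanding care is the verification that $H$ remains irreducible, so that the pyramid theorem applies verbatim, together with the bookkeeping that $H-\{u,v\}$ coincides with $G-\{u,v\}$, which is what transfers uniqueness of $M_0$ to uniqueness of the matching through $e$.
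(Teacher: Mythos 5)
Your proof is correct, but it takes a genuinely different route from the paper's. The paper disposes of the corollary in one line: the single edge $e=(u,v)$ is a facet-defining set in $H$, so by the Brualdi--Gibson characterization of facets (\autoref{prop:BG:facets}, first case: $G=H-e$ is a connected face subgraph differing from $H$ by exactly one edge) the face $\face(G)$ is a facet of $\face(H)$; since $M_0\cup\{e\}$ is the only perfect matching of $H$ using $e$, this facet contains every vertex of $\face(H)$ except one, and a polytope having a facet that contains all but one of its vertices is a pyramid over that facet. You instead apply the pyramid-characterization theorem proved immediately before the corollary to the graph $H$ itself, which obliges you to verify its hypotheses --- that $H$ is a connected irreducible face graph and that $e$ lies in a unique perfect matching --- and then to identify the base of the resulting pyramid with $\face(G)$. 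Both arguments pivot on the same computation, namely that $H-\{u,v\}=G-\{u,v\}$ transfers uniqueness of $M_0$ to uniqueness of the matching through $e$, and that every other vertex avoids $e$. What your route buys is completeness: the corollary itself asserts that $H$ is a face graph, and your explicit check of this, and of irreducibility (which is indeed preserved, since adding an edge only raises degrees and enlarges common neighborhoods), fills in details the paper leaves implicit; your identification of the base, using that the matchings of $H$ other than $M$ are exactly those of $G$ and that their union is all of $G$ by elementarity, is also carried out carefully. What the paper's route buys is brevity and slightly weaker requirements: the facet argument never uses irreducibility of $H$, whereas your verbatim appeal to the theorem does.
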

\begin{proof}
  $\{(u,v)\}$ is a facet defining set and the facet contains all but one vertex of $H$.
\end{proof}

\begin{corollary}\label{cor:faces:pyramid}
  Let $G$ be a face graph with an edge $e$ contained in a unique perfect matching. Then the pyramid
  over the face of $G$ is again a face of $G$.
\end{corollary}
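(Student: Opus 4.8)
The plan is to reduce the statement to the preceding corollary by replacing $e$ with a path of length three, so that the two endpoints of $e$ become a non-adjacent pair whose deletion still leaves a unique perfect matching. Write $e=(a,b)$ with $a$ in the upper layer and $b$ in the lower layer. First I would reduce to the case that $G$ is connected: since $e$ lies in a unique perfect matching, every component of $G$ not containing $e$ must itself have a unique perfect matching, and such a connected elementary component is $2$-connected, hence a single edge contributing only a point to the product $\face(G)$. Discarding these components changes neither $\face(G)$ nor the hypothesis, so we may assume $G$ is connected on, say, $n$ nodes per layer.

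The key observation is that a perfect matching $M$ of $G$ contains $e$ if and only if $M\setminus\{e\}$ is a perfect matching of $G-\{a,b\}$; hence the hypothesis that $e$ lies in a unique perfect matching is equivalent to $G-\{a,b\}$ having a unique perfect matching. Next I would replace $e$ by a path $a,y,x,b$ of length three through a new lower node $y$ and a new upper node $x$. By \autoref{prop:isomgraphs} this local move does not change the combinatorial type, so the resulting graph $G'$ is a face graph with $\face(G')$ combinatorially equal to $\face(G)$. In $G'$ the nodes $a$ and $b$ are no longer adjacent, and deleting them detaches the two new nodes into the single forced edge $(x,y)$; thus $G'-\{a,b\}$ is $G-\{a,b\}$ together with $(x,y)$ and still has a unique perfect matching.

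Now I would set $H:=G'+\{(a,b)\}$ and argue exactly as in the preceding corollary. The graph $H$ is a connected face graph on $n+1$ nodes per layer, and removing the single edge $(a,b)$ returns $G'$; by \autoref{prop:BG:facets}(1) the subgraph $G'$ therefore defines a facet of $\face(H)$, namely a copy of $\face(G')=\face(G)$. Since $H-\{a,b\}=G'-\{a,b\}$ has a unique perfect matching, exactly one perfect matching of $H$ uses the edge $(a,b)$, so this facet contains every vertex of $\face(H)$ but one. A polytope possessing a facet that contains all but one of its vertices is the pyramid over that facet with the remaining vertex as apex; hence $\face(H)$ is the pyramid over $\face(G)$, and $H$ realizes it as a face of $\birkhoff[n+1]$.

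The only delicate point is the second paragraph: one must check that resolving $e$ simultaneously breaks the adjacency of $a$ and $b$ and preserves the unique perfect matching on their complement. A small bookkeeping subtlety is that $G'$ is not irreducible, since the nodes $x,y$ have degree two, so rather than quoting the preceding corollary verbatim I would rerun its one-line argument, which uses only connectedness and the facet description of \autoref{prop:BG:facets}, not irreducibility.
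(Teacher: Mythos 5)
Your proof is correct, and it ends up constructing exactly the same graph as the paper, but it certifies the result through a different chain of lemmas. The paper's own proof is a one-liner via \autoref{thm:StandardWedge}: since $e$ lies in a unique perfect matching, the matchings avoiding $e$ span a facet of $F=\face(G)$ whose complement is a single vertex (the apex), and the pyramid over $F$ is then realized as a wedge of $F$, which exists as a Birkhoff face by that theorem. You instead resolve $e$ into a path of length three, re-insert $(a,b)$, and rerun the argument of the (unlabeled) corollary immediately preceding this one: exactly one perfect matching of $H=G'+\{(a,b)\}$ uses the new edge, $H-(a,b)=G'$ is connected and defines a facet by \autoref{prop:BG:facets}, and a facet containing all vertices but one forces a pyramid. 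Note that your $H$ is literally $G$ with a path of length three attached to the endpoints of $e$, i.e.\ the very graph that \autoref{thm:StandardWedge} assigns to the wedge of $F$ over the facet spanned by the matchings avoiding $e$; so the two proofs build the same object and differ only in how they identify it. Your direct verification is arguably the more robust route: the paper's reduction hinges on the implicit geometric identity that wedging a pyramid over its \emph{base facet} reproduces the pyramid over it, and a literal reading of the paper's phrase ``the wedge over the complement of a facet'' (i.e.\ the wedge over the apex) would give a different polytope --- the wedge of a triangle over a vertex is a square pyramid, not a tetrahedron (compare \autoref{fig:wedge}). Your bookkeeping avoids that subtlety, and you correctly observed both that irreducibility is not needed in the cited corollary's argument and that the disconnected case reduces to the connected one. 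The only degenerate case your argument skips is when the component of $G$ containing $e$ is a single edge, for then $G'$ is a path of length three and not a face graph; but there $\face(G)$ is a point and the claim is trivial.
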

\begin{proof}
  The apex of a pyramid is the complement of a facet, and by \autoref{thm:StandardWedge} the wedge
  over any complement of a facet exists.
\end{proof}

\subsection{$d$-dimensional combinatorial types with Birkhoff dimension $\bdim(\lattice)\ge 2d-2$.}
\label{subsec:2d-2}

\begin{figure}[tb]
  \centering
  \begin{tikzpicture}[scale=1.4]
    \coordinate (g111) at (0,0);
    \coordinate (g112) at (2,0);
    \coordinate (g121) at (0,1);
    \coordinate (g122) at (2,1);

    \coordinate (g111b) at (0,-.3);
    \coordinate (g112b) at (2,-.3);

    \coordinate (g211) at (3,0);
    \coordinate (g212) at (6,0);
    \coordinate (g221) at (3,1);
    \coordinate (g222) at (6,1);

    \coordinate (g211b) at (3,-.3);
    \coordinate (g212b) at (6,-.3);

    \coordinate (g311) at (7,0);
    \coordinate (g312) at (9,0);
    \coordinate (g321) at (7,1);
    \coordinate (g322) at (9,1);

    \coordinate (g311b) at (7,-.3);
    \coordinate (g312b) at (9,-.3);

    \coordinate (g1) at (1,-1);
    \coordinate (g2) at (4.5,-1);
    \coordinate (g3) at (8,-1);

    \coordinate (u1) at (barycentric cs:g112=1,g121=8,g111=1);%
    \coordinate (v1) at (barycentric cs:g121=1,g112=8,g122=1);%

    \coordinate (u2) at (barycentric cs:g212=1,g211=2,g221=12);%
    \coordinate (v2) at (barycentric cs:g221=1,g212=8,g222=1);%

    \coordinate (u3) at (barycentric cs:g311=1,g321=8,g312=1);%
    \coordinate (v3) at (barycentric cs:g321=1,g312=8,g322=1);%

    \fill[color=black!20] (g111) -- (g112) -- (g122) -- (g121) -- cycle;
    \fill[color=black!20] (g211) -- (g212) -- (g222) -- (g221) -- cycle;
    \fill[color=black!20] (g311) -- (g312) -- (g322) -- (g321) -- cycle;

    \draw [decorate,decoration={brace,amplitude=4pt}] (g112b) -- (g111b) node [black,midway,below=4pt] {$G_1$};
    \draw [decorate,decoration={brace,amplitude=4pt}] (g212b) -- (g211b) node [black,midway,below=4pt] {$G_2$};
    \draw [decorate,decoration={brace,amplitude=4pt}] (g312b) -- (g311b) node [black,midway,below=4pt] {$G_3$};

    \node at (u3) {} edge [line width=1pt,auto] (v2);
    \node at (u2) {} edge [line width=1pt,auto] (v1);
    \node at (u1) {} edge [line width=1pt,auto] (v3);

    \draw[color=black!20,fill=red,line width=1pt] (u1) circle (3pt) node[black,left,xshift=-5pt] {$u^1$};
    \draw[color=black!20,fill=red,line width=1pt] (v1) circle (3pt) node[black,left,xshift=-3pt] {$v^1$};
    \draw[color=black!20,fill=red,line width=1pt] (u2) circle (3pt) node[black,right,xshift=3pt] {$u^2$};
    \draw[color=black!20,fill=red,line width=1pt] (v2) circle (3pt) node[black,left,xshift=-3pt] {$v^2$};
    \draw[color=black!20,fill=red,line width=1pt] (u3) circle (3pt) node[black,right,xshift=3pt] {$u^3$};
    \draw[color=black!20,fill=red,line width=1pt] (v3) circle (3pt) node[black,right,xshift=5pt] {$v^3$};
  \end{tikzpicture}
  \caption{The circular connection of a graph.}
  \label{fig:circularconnection}
\end{figure}
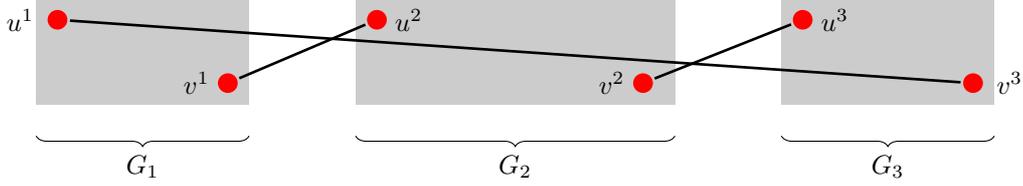
For the remainder of this section we will study combinatorial types of faces with large Birkhoff
dimension. We have seen above that for a given combinatorial type $\lattice$ this is bounded by
$\bdim(\lattice)\le 2d$. The next proposition characterizes the case of equality.
\begin{proposition}\label{prop:cube}
  Let $\lattice$ be a $d$-dimensional combinatorial type of a face of $\birkhoff$ with
  $\bdim(\lattice)=2d$. Then $\lattice$ is a cube.
\end{proposition}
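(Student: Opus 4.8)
The plan is to work with a node-minimal face-graph realization of $\lattice$ and to bound each connected component separately using the node estimates already established. First I would fix a face graph $G$ realizing $\lattice$ with the smallest possible number $n$ of nodes in each layer, so that $n=\bdim(\lattice)=2d$ by hypothesis. Such a $G$ must be irreducible: if it had a reducible node $v$, then the two neighbors of $v$ have only $v$ as a common neighbor, so contracting $v$ creates no multiple edge and $\red(G,v)$ is again a simple face graph of the same combinatorial type by \autoref{prop:isomgraphs}, but with $n-1$ nodes in each layer, contradicting minimality of $n$.

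Next I would split $G$ into its connected components $G_1,\dots,G_k$. By the proposition on disconnected face graphs, each $G_i$ is itself a face graph, $\lattice$ is the product of the types $\lattice_i$ of the $G_i$, and each $G_i$ is connected and irreducible. Writing $n_i$ for the number of nodes per layer of $G_i$ and $d_i=\dim\face(G_i)$, we have $\sum_i n_i=2d$ and $\sum_i d_i=d$. Since every node of a face graph has degree at least $2$, no component is a single edge, so $n_i\ge 2$ for all $i$. The heart of the argument is the per-component estimate $n_i\le 2d_i$, with equality only for a single $4$-cycle: if $n_i\ge 3$, this is \autoref{cor:irredNodeBound}, which forces strictly fewer than $2d_i$ nodes per layer; if $n_i=2$, then by \autoref{prop:lowNodeNumbers} the component has four edges, i.e.\ it is $K_{2,2}$, which is a segment, so $d_i=1$ and $n_i=2=2d_i$.

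Summing the component estimate gives $2d=\sum_i n_i\le\sum_i 2d_i=2d$, so equality must hold in every summand. Hence each $G_i$ is a $4$-cycle, and $G$ is a disjoint union of exactly $d$ four-cycles. Since a $4$-cycle is the face graph of a segment and disjoint union of face graphs corresponds to the product of the associated faces, $G$ is precisely the face graph of a product of $d$ segments, that is, of the $d$-cube. Therefore $\lattice$ is a cube, as claimed.

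The step I expect to be the main obstacle is the equality analysis at the boundary case $n_i=2$, where the bound of \autoref{cor:irredNodeBound} degenerates: the $4$-cycle is the unique configuration achieving $n_i=2d_i$, and it must be identified by hand as a $1$-cube so that the resulting product of components is recognized as the $d$-cube rather than merely as some product of lower-dimensional faces. The rest of the argument is a clean counting identity once the irreducibility of the minimal realization and the sharp component bound are in place.
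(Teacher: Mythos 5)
Your proof is correct, and it organizes the counting differently from the paper. The paper argues by induction on $d$: it applies \autoref{cor:irredNodeBound} once, to the whole graph, to conclude that $G$ is disconnected; it then splits off a single component $G_1$, forces $n_1=2$ via the edge-count lower bounds (\autoref{cor:MinNumberOfEdges} and \autoref{prop:lowNodeNumbers}), and applies the induction hypothesis to the remainder $G_2$, an irreducible $(d-1)$-dimensional graph on $2(d-1)$ nodes. You dispense with both the induction and \autoref{cor:MinNumberOfEdges}, applying the node bound of \autoref{cor:irredNodeBound} to every component simultaneously and closing with the summation $2d=\sum_i n_i\le\sum_i 2d_i=2d$; stylistically this is closer to the paper's own treatment of \autoref{prop:2dm1}. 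Your route has two genuine advantages. First, you prove, rather than assume, that a node-minimal realization is irreducible (reduction at a reducible node stays simple); the paper uses this silently, and it matters, because irreducibility does not imply node-minimality (\autoref{fig:NumberNodesnotUnique}) --- this is also why the paper's inductive step, applied to a $G_2$ that is only known to be irreducible, is really an induction on graphs rather than on Birkhoff dimension. Second, you never apply an edge bound to the possibly disconnected remainder $G_2$, a step which in the paper strictly speaking requires a componentwise argument of exactly your kind (the bound of \autoref{cor:MinNumberOfEdges} fails on segment components). One caveat, to which your proof is more exposed than the paper's since you invoke \autoref{cor:irredNodeBound} on small components: as stated, that corollary misses its own exceptional case in which one node is partner for all minimal nodes --- the triangle graph is connected, irreducible, $2$-dimensional, with $3>2\cdot 2-2$ nodes per layer --- and the correct bound is $n_i\le\max(d_i+1,\,2d_i-2)$. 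Your argument survives unchanged: all you need is $n_i<2d_i$ for $n_i\ge 3$, and $\max(d_i+1,2d_i-2)<2d_i$ whenever $d_i\ge 2$, which holds for every connected irreducible graph with $n_i\ge 3$, since a connected $1$-dimensional graph is a single cycle and cycles of length at least six are reducible.
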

\begin{proof}
  We prove this by induction. If $d=1$, then this follows from \autoref{prop:lowNodeNumbers}.

  By \autoref{cor:irredNodeBound} a connected irreducible graph of a $d$-dimensional face has at
  most $2d-2$ nodes in each layer. Hence, the graph $G$ of $F$ must be disconnected. Let $m$, $n$,
  and $k$ be its number of edges, nodes, and connected components, resp.  Let $G_1$ be a connected
  component of $G$, and $G_2$ the remaining graph. Both are irreducible graphs. Let $G_i$ have $n_i$
  nodes, $m_i$ edges, $k_i$ components and define a face of dimension $d_i$.

  The dimension formula gives $d=m-2n+k=m-4d+k$, so $m=5d-k$.  We argue that $n_1=2$. Suppose
  otherwise. If $n_1\ge 4$, then by \autoref{cor:MinNumberOfEdges} we can estimate
  \begin{align*}
    5d-k\ =\ m\ &\ge \ 2n_1+\left\lceil\frac{n_1}2\right\rceil +2n_2+\left\lceil\frac{n_2}2\right\rceil \ge\
    2n+\frac n2 = 5d\,.
  \end{align*}
  As $k\ge 1$ this is not possible. Now if $n_1=3$, then $n_2=2d-3$, and by
  \autoref{prop:lowNodeNumbers} we know $m_1\ge 7$. So we can compute
  \begin{align*}
    5d-k\ =\ m\ &\ge \ 7 +2n_2+\left\lceil\frac{n_2}2\right\rceil \ge\ 7+4d-6+\frac {2d-2}2 = 5d\,.
  \end{align*}
  which again contradicts $k\ge 1$. So $n_1=2$, and $G_1$ defines a segment. $G_2$ now is an
  irreducible graph of dimension $d-1$ on $2(d-1)$ nodes. By induction, this must be a cube.
\end{proof}

\begin{proposition}\label{prop:2dm1}
  A $d$-dimensional combinatorial type $\lattice$ of a face of a Birkhoff polytope with
  $\bdim(\lattice)\ge 2d-1$ is a product of a cube and a triangle.
\end{proposition}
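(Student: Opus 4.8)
The plan is to run the node-counting argument of \autoref{prop:cube}, but with a single integer weight attached to each connected component. By \autoref{thm:billera} we have $\bdim(\lattice)\le 2d$, so the hypothesis leaves only $\bdim(\lattice)=2d$ or $\bdim(\lattice)=2d-1$; the former is the cube by \autoref{prop:cube}, so I focus on $\bdim(\lattice)=2d-1$. I would work with a \emph{minimal} face graph $G$ representing $\lattice$, one with exactly $\bdim(\lattice)=2d-1$ nodes in each layer. Such a $G$ is automatically irreducible, since contracting a reducible node would yield a representation on fewer nodes, contradicting minimality. Writing $G$ as the disjoint union of its connected components $G_1,\dots,G_k$, with $G_i$ having $n_i$ nodes per layer, $m_i$ edges, and defining a face of dimension $d_i$, each $G_i$ is again a connected irreducible face graph, and the face of $G$ is the product of the faces of the $G_i$. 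Thus $\sum_i n_i = 2d-1$ and $\sum_i d_i = d$, and it suffices to show that exactly one $G_i$ is a triangle and every other $G_i$ is a segment, for then the product is $C^{d-2}$ times a triangle.

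Next I would attach to each component the weight $w_i := n_i - 2d_i$. Using $d_i = m_i-2n_i+1$ this becomes $w_i = 5n_i - 2m_i - 2$, and summing over all components gives $\sum_i w_i = (2d-1)-2d = -1$. The heart of the argument is the claim that every component satisfies $w_i\le 0$, with $w_i=0$ precisely when $G_i$ is a segment and $w_i=-1$ precisely when $G_i$ is a triangle, and $w_i\le -2$ in all remaining cases. Granting this, each summand is nonpositive while the total is $-1$, which is possible only if exactly one component has weight $-1$ and all others have weight $0$: one triangle and $k-1$ segments. Comparing dimensions, $2+(k-1)\cdot 1 = d$, so there are $d-2$ segments and $\lattice = C^{d-2}\times(\text{triangle})$, a product of a cube and a triangle. (The identical computation with $\bdim(\lattice)=2d$ gives $\sum_i w_i = 0$, forcing all segments, and so reproves \autoref{prop:cube} as a special case.)

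The only real obstacle, and the step I expect to require care, is the weight dichotomy for a single connected irreducible component. For $n_i\ge 4$ I would invoke \autoref{cor:MinNumberOfEdges}, which gives $m_i\ge 2n_i+\lceil n_i/2\rceil$; substituting into $w_i=5n_i-2m_i-2$ yields $w_i\le n_i-2\lceil n_i/2\rceil-2$, which is $-2$ for $n_i$ even and $-3$ for $n_i$ odd, so $w_i\le -2$. For $n_i\le 3$ I would use \autoref{prop:lowNodeNumbers}: a component with $n_i=2$ has exactly four edges, hence is the $4$-cycle, i.e.\ a segment with $w_i=0$; a component with $n_i=3$ has $m_i\ge 7$, and $m_i=7$ gives $w_i=-1$ while $m_i\ge 8$ gives $w_i\le -3$. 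It then remains to check that the case $n_i=3$, $m_i=7$ really is a triangle: such a component is connected of dimension $m_i-2n_i+1=2$, and a short perfect-matching count shows it has exactly three vertices, so it is a $2$-simplex. Finally I would note that every component has $n_i\ge 2$ and $d_i\ge 1$, since a face graph is elementary and so has minimum degree at least $2$; this excludes stray $0$-dimensional point components, which would otherwise contribute positive weight and invalidate the count.
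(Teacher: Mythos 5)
Your proof is correct and takes essentially the same route as the paper: both pass to a minimal (hence irreducible) representation on $2d-1$ nodes, split it into connected components, and use \autoref{cor:MinNumberOfEdges} together with \autoref{prop:lowNodeNumbers} to force every component to be a segment except for a single triangle. Your per-component weight $w_i=n_i-2d_i$ is a local repackaging of the paper's global edge inequality (which instead tracks $k_{2/3}$ and $k_o$ and invokes the parity of $2d-1$); the only substantive additions in your write-up are making explicit that minimality forces irreducibility and that the $3$-node, $7$-edge component is indeed a triangle, both of which the paper leaves implicit.
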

\begin{proof}
  As in the previous proof, our graph has $d+2n-k=d+4d-2-k=5d-2-k$ edges. Let $n_1, \ldots, n_k$ be
  the number of nodes of the upper layer of each component of the graph. Let $k_{2/3}$ be the number
  of components with two or three nodes and $k_o$ the number of components with an odd number of
  nodes. Using \autoref{cor:MinNumberOfEdges} we can estimate the number of edges in the graph by
  \begin{align*}
    e\ge \sum_{i=1}^k \left(2n_i+\left\lceil\frac{n_i}2\right\rceil\right) -k_{2/3}\ &=\
    4d-2+d-\frac12+\frac{k_o}{2}-k_{2/3}\ =\ 5d-\frac 52+\frac{k_o}{2}-k_{2/3}\,.
  \end{align*}
  Hence,
  \begin{align*}
    e\ &=\ 5d-k-2\ \ge\ 5d-\frac 52+\frac{k_o}2-k_{2/3}\ \quad
    \Longleftrightarrow\ \quad  k\le k_{2/3}-\frac{k_o-1}{2}\,,
  \end{align*}
  and we conclude $k_{2/3}=k$ and $k_o\le 1$. So at most one component has more then two nodes on
  each layer. However, $2d-1$ is odd, hence $k_o=1$. This implies the proposition.
\end{proof}

\begin{proposition}\label{prod:2dm2}
  A combinatorial type $\lattice$ of a Birkhoff face $F$ of dimension $d\ge 3$ with
  $\bdim(\lattice)\ge 2d-2$ is either a product of two lower dimensional faces or a pyramid over a
  cube of dimension $d-1$.  For $1$-dimensional and $2$-dimensional types $\lattice$ we have
  $\bdim(\lattice)=3$.
\end{proposition}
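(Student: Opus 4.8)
The plan is to follow the template of \autoref{prop:cube} and \autoref{prop:2dm1}: pass to a minimal irreducible realization and control the connected components by an edge count. First I would dispose of the cases $\bdim(\lattice)>2d-2$. If $\bdim(\lattice)=2d$ then $\lattice$ is a cube by \autoref{prop:cube}, and if $\bdim(\lattice)=2d-1$ then it is the product of a cube and a triangle by \autoref{prop:2dm1}; both are products, so the claim holds. Hence I may assume $\bdim(\lattice)=2d-2$ and choose a face graph $G$ realizing $\lattice$ with $n=2d-2$ nodes in each layer. A reducible node can be contracted to a \emph{simple} face graph with one fewer node in each layer of the same combinatorial type (\autoref{prop:isomgraphs}), so minimality of $n$ forces $G$ to be irreducible. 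By \eqref{eq:dimension} it has $m=5d-4-k$ edges, where $k$ is its number of components.

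Next I would run the component count as in \autoref{prop:2dm1}. Let $n_i$ be the sizes of the components, $k_{2/3}$ the number of them of size $2$ or $3$, and $k_o$ the number of odd size. \autoref{cor:MinNumberOfEdges}, together with the exact minima from \autoref{prop:lowNodeNumbers} for the small components, gives
\begin{align*}
  5d-4-k\ =\ m\ \ge\ \sum_{i=1}^{k}\Bigl(2n_i+\Bigl\lceil\tfrac{n_i}{2}\Bigr\rceil\Bigr)-k_{2/3}\ =\ 5d-5+\frac{k_o}{2}-k_{2/3}\,,
\end{align*}
whence $k\le 1+k_{2/3}-k_o/2$. As $k_{2/3}\le k$ this forces $k_o\le 2$, and since $n=2d-2$ is even, $k_o$ is even, so $k_o\in\{0,2\}$; moreover the number $k-k_{2/3}$ of components of size $\ge 4$ is at most $1-k_o/2$. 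If $k_o=2$ the inequality is tight, so every component is a segment or a triangle and $F$ is the product of two triangles with a cube. If $k_o=0$ and $G$ is disconnected, then $k\ge 2$ forces a segment, there is exactly one component $G_1$ of size $\ge 4$ (necessarily even, with $n_1=2\dim(G_1)-2$), and splitting off the segments exhibits $F$ as a product. In every disconnected case $F$ is thus a product of lower dimensional faces, and it remains to treat the connected case, where $G$ is a connected irreducible $d$-face graph attaining the maximal node number $2d-2$ of \autoref{cor:irredNodeBound}.

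This connected extremal case is the heart of the argument and the step I expect to be hardest. For $d\ge 4$, \autoref{prop:degree} bounds the maximal degree by $2d-n+1=3$, so every node has degree $2$ or $3$; since the upper excess $\sum_{\mathrm{upper}}(\deg-2)=m-2n=d-1$ equals the number of degree-$3$ upper nodes, each layer carries exactly $d-1$ nodes of degree $2$ and $d-1$ of degree $3$. Each degree-$2$ node $\alpha$ has a partner by \autoref{lemma:partnerlemma}, necessarily of degree $3$, and \autoref{cor:partnerbound} allows a degree-$3$ node to be a partner for at most $2$ nodes, which would force a component of size $3<n$; connectivity rules this out, so each degree-$3$ node is a partner for at most one node. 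This makes the partner relation a bijection pairing each degree-$2$ node $\alpha$ with a unique degree-$3$ node $\beta\in\partners(\alpha)$, and since $\beta$ meets both neighbours of $\alpha$, the four nodes $\{\alpha,\beta\}\cup\neigh(\alpha)$ span a $4$-cycle. I would then show these $4$-cycles tile $G$: a lower node adjacent to two distinct degree-$2$ upper nodes would meet both their partners and so have degree $\ge 4$, which is impossible; hence every lower node lies in exactly one block and the $d-1$ blocks exhaust both layers. The only remaining edges are the third edges of the degree-$3$ nodes, which join the blocks according to a fixed-point-free permutation $\sigma$ of the blocks; connectivity forces $\sigma$ to be a single $(d-1)$-cycle, identifying $G$ with the circular connection $\circular$ of $d-1$ segments. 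As deleting the two distinguished nodes of a segment leaves a single edge with a unique perfect matching, the discussion preceding \autoref{def:pyramidal} shows $\face(G)$ is the pyramid over the $(d-1)$-cube. The same analysis applied to the component $G_1$ above identifies it as a pyramid over a cube, so the product cases read as a cube times a pyramid over a cube.

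It remains to handle small dimensions. The case $d=3$ (where \autoref{prop:degree} permits a node of degree $4=n$) reduces to a finite inspection of the connected irreducible face graphs on $4+4$ nodes with ten edges, all of which define the square pyramid, i.e.\ the pyramid over the $2$-cube. For $d\le 2$ the list of types is even shorter and is checked directly: the only $1$-dimensional type is the segment, while for $d=2$ the triangle (the pyramid over a $1$-cube) is realized irreducibly only on three nodes, giving $\bdim=3$, and the square has $\bdim=4$ by \autoref{prop:cube}.
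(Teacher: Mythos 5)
Your proposal follows essentially the same route as the paper's proof: pass to a minimal (hence irreducible, and if non-product, connected) realization on $2d-2$ nodes, use \autoref{prop:degree} to cap the degree at $3$ for $d\ge 4$, turn \autoref{lemma:partnerlemma} and \autoref{cor:partnerbound} into a bijection between minimal nodes and degree-$3$ partners, tile the graph by the resulting $4$-cycles (the paper's subgraphs $N_i$), and let the $d-1$ leftover edges force a circular connection, i.e.\ a pyramid over a $(d-1)$-cube; the disconnected case is a product. Two remarks. First, where you assert that the leftover edges realize a fixed-point-free permutation of the blocks, connectivity alone is not enough: you must rule out a block whose two lower nodes both have degree $2$, and this needs the face-graph property --- such a block would receive no leftover edge, and then the third edge of its degree-$3$ upper node would lie in no perfect matching. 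This is exactly the point where the paper invokes ``each edge must be contained in some perfect matching''; the fix is two lines, and the paper is equally terse here. Second, your $d=3$ step is inaccurate as stated: it is not true that every connected irreducible face graph on $4+4$ nodes with ten edges defines a square pyramid, since there is a second such graph (\autoref{fig:tworep}\subref{fig:additionalSimplex}), which defines a $3$-simplex. This does not damage the argument, because that combinatorial type has Birkhoff dimension $3<4=2d-2$ and is therefore excluded by your standing hypothesis $\bdim(\lattice)=2d-2$; it is precisely the exception the paper records at the end of its proof. Finally, your component count in the disconnected case is more than the proposition itself requires (disconnected already means product), but it correctly anticipates the finer classification proved later in \autoref{thm:CompleteClassification}.
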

\begin{figure}[t]
  \centering
    \begin{tikzpicture}[scale=1.2]
      \foreach \x in {0,...,11} {
        \coordinate (u\x) at (\x,1);
        \coordinate (v\x) at (\x,0);
      }
      \coordinate (x) at (5.5,.5);
      
      \draw [line width=1pt] (u0) -- (v0);
      \draw [line width=1pt] (u0) -- (v1);
      \draw [line width=1pt] (u1) -- (v0);
      \draw [line width=1pt] (u1) -- (v1);
      \draw [line width=1.5pt,dashed] (u1) -- (v2);
      \draw [line width=1pt] (u2) -- (v2);
      \draw [line width=1pt] (u2) -- (v3);
      \draw [line width=1pt] (u3) -- (v3);
      \draw [line width=1pt] (u3) -- (v2);
      \draw [line width=1.5pt,dashed] (u3) -- (v4);
      \draw [line width=1pt] (u4) -- (v4);
      \draw [line width=1pt] (u4) -- (v5);
      \draw [line width=1pt] (u5) -- (v4);
      \draw [line width=1pt] (u5) -- (v5);
      \draw [line width=1.5pt,dashed] (u5) -- (v6);
      \draw [line width=1pt] (u6) -- (v7);
      \draw [line width=1pt] (u6) -- (v6);
      \draw [line width=1pt] (u7) -- (v6);
      \draw [line width=1pt] (u7) -- (v7);
      \draw [line width=1.5pt,dashed] (u7) -- (v8);
      \draw [line width=1pt] (u8) -- (v9);
      \draw [line width=1pt] (u8) -- (v8);
      \draw [line width=1pt] (u9) -- (v8);
      \draw [line width=1pt] (u9) -- (v9);
      \draw [line width=1.5pt,dashed] (u9) -- (v10);
      \draw [line width=1pt] (u10) -- (v10);
      \draw [line width=1pt] (u10) -- (v11);
      \node at (u11) {} edge [line width=1.5pt,dashed,bend left=7] (x);
      \node at (x) {} edge [line width=1.5pt,dashed,bend right=7] (v0);
      \draw [line width=1pt] (u11) -- (v10);
      \draw [line width=1pt] (u11) -- (v11);
      
      \foreach \x in {0,...,11} {
        \draw [color=white,fill=red,line width=2pt] (u\x) circle (4pt);
        \draw [color=white,fill=red,line width=2pt] (v\x) circle (4pt);
      }
    \end{tikzpicture}
    \caption{The only connected irreducible face graph on $2d-2$ nodes (here $d=7$). The face set
      defining the base is drawn with dashed lines. Those edges appear in the unique matching
      corresponding to the apex.}
  \label{fig:ElementaryPyramid}
  \label{fig:PyramidOverCube}
  \end{figure}
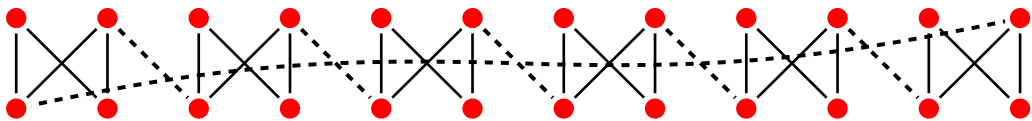
\begin{proof}
  Assume that $F$ is not a product. We first consider the case that $d\ge 4$.  In this case $F$
  corresponds to an irreducible face graph $G$ on $2d-2$ nodes with $e=d+2(2d-2)-1=5d-5$ edges. By
  \autoref{prop:degree} the maximum degree of a node in the graph is $3$. Hence, in each layer we
  have $d-1$ nodes of degree $2$ (\emph{i.e.}, minimal nodes, as $G$ is irreducible) and $d-1$ nodes
  of degree $3$. 

  Consider the nodes in the upper layer. By \autoref{cor:partnerbound} a node of degree $3$ is
  partner for precisely one minimal node, and any minimal node has a unique partner. As also in the
  lower layer the degree of a node is at most $3$ we can deduce that a node in the lower layer is on
  at most one path connecting a minimal node with its partner. By counting nodes we see that each
  node in the lower layer is on precisely one such path.

  We split the graph according to these paths. Let $u_1, \ldots, u_{d-1}$ be the minimal nodes in
  the upper layer, and for each $1\le i\le d-1$ define $N_i$ to be the graph induced by $u_i$, its
  unique partner, and the two paths of length $2$ between them. By the above argument, the graphs
  $N_i$ are pairwise disjoint. Hence, their union $N:=\bigcup N_i$ define a face subgraph of $G$
  whose corresponding face is the product of $d-1$ segments, \emph{i.e.}, it is isomorphic to a
  $(d-1)$-dimensional cube. 

  The graphs $N$ and $G$ have the same number of nodes, and $G$ has $d-1$ additional edges. $N$ is
  disconnected, so those $d-1$ edges must connect the $d-1$ components of $N$. As $G$ is a face
  graph, \emph{i.e.}, each edge must be contained in some perfect matching of $G$, the graphs $N_i$
  can only be connected circularly. Hence, up to relabeling and flipping upper and lower layer in
  the graphs $N_i$, the graph $G$ must look like \autoref{fig:PyramidOverCube}. This is the circular
  connection of the $N_i$.

  For $d=3$ and $n=4$ there is a second irreducible graph on four nodes, see
  \autoref{fig:tworep}\subref{fig:additionalSimplex}. This graph defines a tetrahedron. However, the
  graph in \autoref{fig:tworep}\subref{fig:additionalSimplexsmall} also defines a tetrahedron, so
  this face already appears in $\birkhoff[3]$.
\end{proof}
We can combine the three previous \propositionsref{prop:cube}, \ref{prop:2dm1}, and \ref{prod:2dm2}
into the following slightly extended theorem.
\begin{theorem}\label{thm:CompleteClassification}
  Let $\lattice$ be a combinatorial type of a $d$-dimensional face of a Birkhoff polytope with
  $\bdim(\lattice)\ge 2d-2$. Then $\lattice$ is a
  \begin{enumerate}
  \item a cube, if $\bdim(\lattice)=2d$,
  \item a product of a cube and a triangle, if $\bdim(\lattice)=2d-1$,
  \item a polytope of one of the following types, if $\bdim(\lattice)=2d-2$:
    \begin{enumerate}
    \item a pyramid over a cube,
    \item a product of a cube and a pyramid over a cube,
    \item a product of two triangles and a cube.
    \end{enumerate}
  \end{enumerate}
\end{theorem}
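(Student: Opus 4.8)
The plan is to assemble \propositionsref{prop:cube}, \ref{prop:2dm1}, and \ref{prod:2dm2} and then sharpen the product alternative of the last one. Parts~(1) and~(2) are immediate: if $\bdim(\lattice)=2d$ then $\lattice$ is a cube by \autoref{prop:cube}, and if $\bdim(\lattice)=2d-1$ then $\lattice$ is not a cube (cubes have $\bdim=2d$), so \autoref{prop:2dm1} makes it a product of a cube and a triangle. For part~(3) I first note that $\bdim(\lattice)=2d-2$ forces $d\ge 3$: the only $1$-dimensional type is a segment with $\bdim=2$, and every $2$-dimensional type has $\bdim\ge 3$ since $\birkhoff[2]$ is a segment and carries no $2$-face, so neither can have $\bdim=2d-2$. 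Thus \autoref{prod:2dm2} applies and tells us that $\lattice$ is either a pyramid over a $(d-1)$-cube, which is case~(3a), or a product of two lower-dimensional faces. It remains to show the product case yields exactly~(3b) or~(3c).

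The key tool is additivity of the Birkhoff dimension over products. By the correspondence between products and disconnected face graphs — a disconnected face graph realizes the product of the faces of its components, and conversely a product face has a disconnected graph by \cite[Cor.~4.7]{BG77-2} — any realization of a product $\lattice=\lattice_1\times\lattice_2$ has a face graph whose components split into a part realizing $\lattice_1$ and a part realizing $\lattice_2$. Since the node count of a graph is the sum over its components, minimizing each part separately gives $\bdim(\lattice_1\times\lattice_2)=\bdim(\lattice_1)+\bdim(\lattice_2)$. The same correspondence shows that a minimal face graph for $\lattice$ has connected, hence product-indecomposable, components, so $\lattice$ factors as $\lattice=P_1\times\cdots\times P_r$ into indecomposable faces with $\dim P_i=d_i$, $\sum_i d_i=d$, and $\sum_i\bdim(P_i)=\bdim(\lattice)$.

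Now I run a deficiency count. Setting $\delta_i:=2d_i-\bdim(P_i)\ge 0$, additivity gives $\sum_i\delta_i=2d-\bdim(\lattice)=2$. I classify the indecomposable factors by deficiency using the three propositions: a factor with $\delta_i=0$ is a cube (\autoref{prop:cube}), so being indecomposable it is a segment; a factor with $\delta_i=1$ is a cube times a triangle (\autoref{prop:2dm1}), so being indecomposable it is a triangle; and a factor with $\delta_i=2$ has $d_i\ge 3$ by the bound above, so \autoref{prod:2dm2} makes it a product or a pyramid over a cube, whence being indecomposable it is a pyramid over a cube. Because $\sum_i\delta_i=2$ with all $\delta_i\ge 0$, the nonzero deficiencies form either $\{2\}$ or $\{1,1\}$ and every other factor is a segment. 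In the first case $\lattice$ is a product of a pyramid over a cube with the cube formed by the remaining segments — case~(3b); in the second case $\lattice$ is a product of two triangles with the cube formed by the remaining segments — case~(3c). The fully indecomposable situation $r=1$ is precisely the pyramid-over-cube alternative of \autoref{prod:2dm2}, giving~(3a).

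The main obstacle is not the final enumeration but securing its foundation: one must establish that the Birkhoff dimension is additive over products and that the indecomposable product factorization of $\lattice$ is combinatorially well defined, both of which rest on the precise correspondence between product faces and disconnected face graphs together with \cite[Cor.~4.7]{BG77-2}. Once additivity is in place, the constraint $\sum_i\delta_i=2$ together with the three classification propositions rigidly determines the admissible indecomposable factors (segment, triangle, pyramid over a cube), and the theorem follows by inspection.
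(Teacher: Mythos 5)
Your proposal is correct and takes essentially the same route as the paper: the paper likewise assembles \autoref{prop:cube}, \autoref{prop:2dm1}, and \autoref{prod:2dm2}, and settles the remaining product case by the same deficiency count — writing $F=F_1\times F_2$ for the split of a minimal ($2d-2$)-node face graph, setting $r_i:=2d_i-n_i$, deriving $r_1+r_2=2$, and finishing by induction on dimension. Your deviations are only organizational (additivity of $\bdim$ made explicit, a full indecomposable factorization with deficiency multiset $\{2\}$ or $\{1,1\}$ instead of a two-factor split plus induction), and you lean on exactly the same foundational fact, the product/disconnected-graph correspondence via \cite[Cor.~4.7]{BG77-2}, that the paper also uses without further proof.
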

\begin{proof}
  The only claim not contained in the previous propositions is the classification of products
  leading to a $d$-dimensional face $F$ on $2d-2$ nodes.  Assume that $F$ is a product $F_1\times
  F_2$ of polytopes $F_1$, $F_2$ (which may itself be products) of dimensions $d_1$ and $d_2$. Let
  $\gr(F_i)$ have $n_i$ nodes in each layer.  Then $d_1+d_2=d$. Define non-negative numbers
  $r_i:=2d_i-n_i$, $i=1,2$.

  Assume that $\gr(F)$ has $k$ and $\gr(F_i)$ has $k_i$ components, $i=1,2$.  Then $k_1+k_2=k$,
  $\gr(F)$ has $2n+d-k=2(2d-2)+d-k=5d-4-k$ edges, and $\gr(F_i)$ has
  \begin{align*}
    2n_i+d_i-k_i\ &=\ 2(2d_i-r_i)+d_i-k_i\ =\ 5d_i-2r_i-k_i
  \end{align*}
  edges, for $i=1,2$.  This implies $r_1+r_2=2$.  Hence, $r_1=2, r_2=0$ or $r_1=r_2=1$ or
  $r_1=0,r_2=2$ and the claim follows by induction.
\end{proof}

\subsection{$d$-dimensional combinatorial types with Birkhoff dimension at least $2d-3$.}
\label{subsec:2d-3}
\begin{figure}[bt]
\centering
\label{fig:tworeps}
  \subfigure[A second $3$-dimensional face with an irreducible graph on four nodes.\label{fig:additionalSimplex}]{
    \begin{tikzpicture}[scale=1.1]
      \foreach \x in {-1,0,1,2,3,5} {
        \coordinate (u\x) at (\x,1);
        \coordinate (v\x) at (\x,0);
      }
      
      \draw [line width=1pt] (u0) -- (v0);
      \draw [line width=1pt] (u0) -- (v1);
      \draw [line width=1pt] (u1) -- (v0);
      \draw [line width=1pt] (u1) -- (v1);
      \draw [line width=1pt] (u1) -- (v2);
      \draw [line width=1pt] (u1) -- (v3);
      \draw [line width=1pt] (u2) -- (v1);
      \draw [line width=1pt] (u2) -- (v2);
      \draw [line width=1pt] (u3) -- (v1);
      \draw [line width=1pt] (u3) -- (v3);
      
      \foreach \x in {0,1,2,3} {
        \draw [color=white,fill=red,line width=2pt] (u\x) circle (4pt);
        \draw [color=white,fill=red,line width=2pt] (v\x) circle (4pt);
      }
      \draw [color=white] (u-1) circle (4pt);
      \draw [color=white] (u5) circle (4pt);
    \end{tikzpicture}
  }
  \subfigure[A smaller representation of the same face.\label{fig:additionalSimplexsmall}]{
    \begin{tikzpicture}[scale=1.1]
      \foreach \x in {-1,0,1,2,4} {
        \coordinate (u\x) at (\x,1);
        \coordinate (v\x) at (\x,0);
      }
      
      \draw [line width=1pt] (u0) -- (v0);
      \draw [line width=1pt] (u0) -- (v1);
      \draw [line width=1pt] (u1) -- (v0);
      \draw [line width=1pt] (u1) -- (v1);
      \draw [line width=1pt] (u1) -- (v2);
      \draw [line width=1pt] (u2) -- (v1);
      \draw [line width=1pt] (u2) -- (v2);
      \draw [line width=1pt] (u2) -- (v0);
      
      \foreach \x in {0,1,2} {
        \draw [color=white,fill=red,line width=2pt] (u\x) circle (4pt);
        \draw [color=white,fill=red,line width=2pt] (v\x) circle (4pt);
      }
      \draw [color=white] (u-1) circle (4pt);
      \draw [color=white] (u4) circle (4pt);
    \end{tikzpicture}
  }
  \caption{Two representations of the same combinatorial type of
  face.\label{fig:tworep}}
\end{figure}
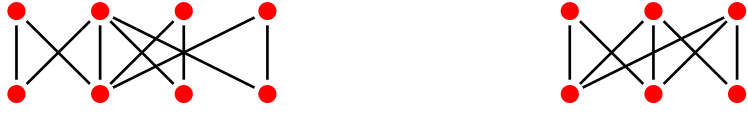

In this section we introduce a new construction for polytopes, the \emph{joined products} and
\emph{reduced joined products} and use them to classify faces of $\birkhoff$ for $n=2d-3$, but also
many other faces of $\birkhoff$ are of this type.  We give a combinatorial description and deduce
their corresponding face graph. We use these to classify combinatorial types of faces
in~\autoref{thm:ClassWedges}.

Let $Q_1, \ldots, Q_k$ be polytopes in $\R^m$ (not necessarily all $m$-dimensional). The
\emph{Cayley sum} of $Q_1, \ldots, Q_k$ is the polytope
\begin{align*}
  \cayley(Q_1, \ldots, Q_k)\ :=\ \conv(Q_1\times e_1,\, Q_2\times e_2,\, \ldots, Q_k\times e_k)\,,
\end{align*}
where $e_1, e_2, \ldots, e_k$ are the $k$-dimensional unit vectors. We use this construction for a
special set of polytopes $Q_1, \ldots, Q_k$. Let $\zv{d}$ be the $d$-dimensional zero vector, and
$P_i$ $d_i$-dimensional polytopes for $1\le i\le k$. We define
\begin{align*}
  Q_i\ &:=\ P_1\times \cdots \times P_{i-1}\times \zv{d_i}\times P_{i+1}\times \cdots\times P_k\,,
  \intertext{and}
  Q_0\ &:=\ P_1\times \cdots\times P_k\,.
\end{align*}
\begin{definition}
  The \emph{joined product} of the polytopes $P_1, \ldots, P_k$ is
  \begin{align}
    \joinedprod(P_1, \ldots, P_k)\ &:=\  \cayley(Q_1, \ldots, Q_k)\,,\label{eq:joinedprod}\\
    \intertext{and the \emph{reduced joined product} is} \joinedprod^{\rm red}(P_1, \ldots, P_k)\
    &:=\ \cayley(Q_0, \ldots, Q_k)\,.\label{eq:redjoinedprod}
  \end{align}
\end{definition}
The reduced joined product is the special case of the joined product where one of the factors is
just a point. Hence, in the following considerations on combinatorial properties we restrict to
joined products. 

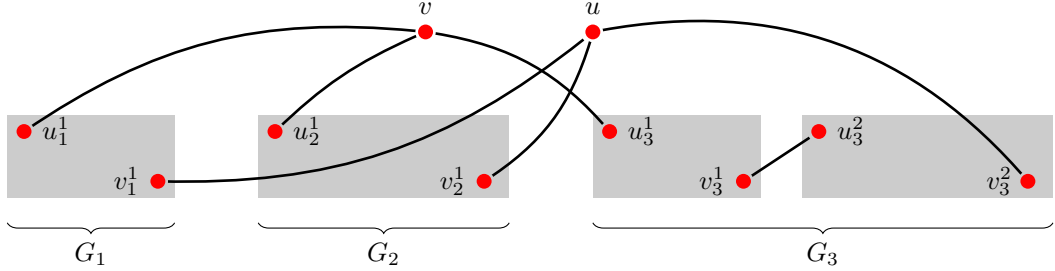
\begin{figure}[t]
  \centering
  \begin{tikzpicture}[scale=1.1]
    \coordinate (g111) at (0,0);
    \coordinate (g112) at (2,0);
    \coordinate (g121) at (0,1);
    \coordinate (g122) at (2,1);

    \coordinate (g111b) at (0,-.3);
    \coordinate (g112b) at (2,-.3);

    \coordinate (g211) at (3,0);
    \coordinate (g212) at (6,0);
    \coordinate (g221) at (3,1);
    \coordinate (g222) at (6,1);

    \coordinate (g211b) at (3,-.3);
    \coordinate (g212b) at (6,-.3);

    \coordinate (g3a11) at (7,0);
    \coordinate (g3a12) at (9,0);
    \coordinate (g3a21) at (7,1);
    \coordinate (g3a22) at (9,1);
    \coordinate (g3b11) at (9.5,0);
    \coordinate (g3b12) at (12.5,0);
    \coordinate (g3b21) at (9.5,1);
    \coordinate (g3b22) at (12.5,1);

    \coordinate (g311b) at (7,-.3);
    \coordinate (g312b) at (12.5,-.3);

    \coordinate (g1) at (1,-1);
    \coordinate (g2) at (4.5,-1);
    \coordinate (g3) at (10,-1);

    \coordinate (u1) at (barycentric cs:g112=1,g121=8,g111=1);%
    \coordinate (v1) at (barycentric cs:g121=1,g112=8,g122=1);%

    \coordinate (u2) at (barycentric cs:g212=1,g211=2,g221=12);%
    \coordinate (v2) at (barycentric cs:g221=1,g212=8,g222=1);%

    \coordinate (u3a) at (barycentric cs:g3a11=1,g3a21=8,g3a12=1);%
    \coordinate (v3a) at (barycentric cs:g3a21=1,g3a12=8,g3a22=1);%
    \coordinate (u3b) at (barycentric cs:g3b12=1,g3b21=12,g3b11=2);%
    \coordinate (v3b) at (barycentric cs:g3b21=1,g3b12=8,g3b22=1);%

    \coordinate (v) at (5,2);
    \coordinate (u) at (7,2);
   
    \fill[color=black!20] (g111) -- (g112) -- (g122) -- (g121) -- cycle;
    \fill[color=black!20] (g211) -- (g212) -- (g222) -- (g221) -- cycle;
    \fill[color=black!20] (g3a11) -- (g3a12) -- (g3a22) -- (g3a21) -- cycle;
    \fill[color=black!20] (g3b11) -- (g3b12) -- (g3b22) -- (g3b21) -- cycle;

    \draw [decorate,decoration={brace,amplitude=4pt}] (g112b) -- (g111b) node [black,midway,below=4pt] {$G_1$};
    \draw [decorate,decoration={brace,amplitude=4pt}] (g212b) -- (g211b) node [black,midway,below=4pt] {$G_2$};
    \draw [decorate,decoration={brace,amplitude=4pt}] (g312b) -- (g311b) node [black,midway,below=4pt] {$G_3$};

    \node at (u) {} edge [line width=1pt,bend left=20,auto] (v1);
    \node at (v) {} edge [line width=1pt,bend right=20,auto] (u1);
    \node at (u) {} edge [line width=1pt,bend left=20,auto] (v2);
    \node at (v) {} edge [line width=1pt,bend right=10,auto] (u2);
    \node at (v) {} edge [line width=1pt,bend left=20,auto] (u3a);
    \node at (u) {} edge [line width=1pt,bend left=30,auto] (v3b);
    \node at (u3b) {} edge [line width=1pt,auto] (v3a);

    \draw[color=black!20,fill=red,line width=1pt] (u1) circle (3pt) node[black,right,xshift=3pt] {$u_1^1$};
    \draw[color=black!20,fill=red,line width=1pt] (v1) circle (3pt) node[black,left,xshift=-3pt] {$v_1^1$};
    \draw[color=black!20,fill=red,line width=1pt] (u2) circle (3pt) node[black,right,xshift=3pt] {$u_2^1$};
    \draw[color=black!20,fill=red,line width=1pt] (v2) circle (3pt) node[black,left,xshift=-3pt] {$v_2^1$};
    \draw[color=black!20,fill=red,line width=1pt] (u3a) circle (3pt) node[black,right,xshift=3pt] {$u_3^1$};
    \draw[color=black!20,fill=red,line width=1pt] (v3a) circle (3pt) node[black,left,xshift=-3pt] {$v_3^1$};
    \draw[color=black!20,fill=red,line width=1pt] (u3b) circle (3pt) node[black,right,xshift=3pt] {$u_3^2$};
    \draw[color=black!20,fill=red,line width=1pt] (v3b) circle (3pt) node[black,left,xshift=-2pt] {$v_3^2$};

    \draw[color=white,fill=red,line width=1pt] (u) circle (3pt) node[black,above,yshift=3pt] {$u$};
    \draw[color=white,fill=red,line width=1pt] (v) circle (3pt) node[black,above,yshift=3pt] {$v$};
  \end{tikzpicture}
  \caption{The joined product of three graphs. The reduced joined product additionally has an edge
    be\-tween $u$ and $v$.}
  \label{fig:joinedprodgraph}
\end{figure}
Let $m_i$ be the number of vertices of $P_i$, and $M:=\prod_{i=1}^k m_i$. We will show that the
joined product $P$ of $P_1, \ldots, P_k$ has $\sum_{i=1}^k \frac{M}{m_i}$ vertices.  Assume by
contradiction that there is $v\in \bigcup_{i=1}^k \vertices(Q_i)\times e_i$ that is not a vertex of
$P$.  Then $v$ is a convex combination of some of the other vertices, say $v_1, \ldots, v_r$. The
point $v$, and any $v_j$, $1\le j\le r$ has exactly one entry different from $0$ among the last $k$
coordinates.  Hence, any points in the convex combination of $v$ with a positive coefficient
coincide with $v$ in that entry. By construction, this implies that $v$ and all points in its convex
combination are completely contained in one the factors $Q_i\times e_i$, for some $i$. But $v$ and
$v_j$, $1\le j\le r$ are vertices of $Q_i\times e_i$, a contradiction. Hence, any vertex of some
$Q_i$ corresponds to a vertex of the joined product.

The joined products have two obvious types of facets.  For any facet $F_1$ of $P_1$ the polytope
$\joinedprod(F_1, P_2, \ldots, P_k)$ is a facet of $P$. We can do this for any factor of the product
and any two such facets are distinct.  For $k\ge 3$ we also have the facets
\begin{align*}
  \conv(Q_1\times e_1, \ldots, Q_{i-1}\times e_{i-1}, Q_{i+1}\times e_{i+1}, \ldots, Q_k\times
  e_k)\,.
\end{align*}
We construct the corresponding graphs. Let $G_1, \ldots, G_k$, $k\ge 2$ be (not necessarily
connected) face graphs. Let $H$ be a graph with two isolated nodes, and $\ov H$ a graph with two
nodes and the edge between them.  We define the \emph{joined product} $\JPG(G_1, \ldots, G_k)$ of
$G_1, \ldots, G_k$ as the union of the circular connections of $H$ with each $G_i$, and the
\emph{reduced joined product} $\JPG^{\rm red}(G_1, \ldots, G_k)$ as the union of the circular
connections of $\ov H$ with each $G_i$.

We make this construction more precise. For each graph $G_i$ with connected components $G_i^1, G_i^2,
\ldots, G_i^{c_i}$ we select a set $S(G_i)$ of a pair of nodes $u_i^j,v_i^j$ in each $G_i^j$, $1\le
j\le c_i$, with $u_i^j$ in the upper and $v_i^j$ in the lower layer. Let $u,v$ be the nodes of
$H$. Then the \emph{joined product} $\JPG(G_1, \ldots, G_k)$ is the disjoint union of $H$ and $G_1,
\ldots, G_k$ together with the edges $(u,v_i^{c_i})$ and $(v,u_i^1)$ for $1\le i\le k$ and $(v_i^j,
u_i^{j+1})$ for $1\le i\le k$ and $1\le j\le c_i-1$. The \emph{reduced joined product} is obtained
in the same way with $\ov H$ instead of $H$ (with labels $u$ and $v$ for the nodes of $\ov H$).  See
\autoref{fig:joinedprodgraph} for an illustration.

Clearly, the isomorphism types of the resulting graphs depend on the choice of the two nodes in each
component of the $G_i$. In general, they will not be face graphs. More precisely, the joined product
graph $\JPG(G_1,\ldots, G_k)$ is a face graph if and only if for each $1\le i\le k$ the circular
connection of $H$ and $G_i$ using the nodes in $S(G_i)$ is a face graph, and similarly for
$\JPG^{\rm red}(G_1, \ldots, G_k)$. Note that we have called a choice $S(G_i)$ of nodes
$u_i^j,v_i^j$, $1\le j\le c_i$ \emph{pyramidal}, if $G_i-S(G_i)$ has a unique perfect matching, for
$1\le i\le k$, see~\autoref{def:pyramidal}.
\begin{theorem}
  Let $G_1, \ldots, G_k$ be face graphs with pyramidal sets $S_i(G_i)$ of nodes and
  $F_i:=\face(G_i)$, $1\le i\le k$.
  \begin{enumerate}
  \item $G:=\JPG(G_1, \ldots, G_k)$ is a face graph with face given by $\joinedprod(F_1, \ldots,
    F_k)$.
  \item $G:=\JPG^{\rm red}(G_1, \ldots, G_k)$ is a face graph with face given by $\joinedprod^{\rm
      red}(F_1, \ldots, F_k)$.
  \end{enumerate}
  \end{theorem}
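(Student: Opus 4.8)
The plan is to prove part (1) directly and then deduce part (2) from it. For part (2), observe that $\joinedprod^{\rm red}(F_1,\ldots,F_k)$ equals the ordinary joined product $\joinedprod(F_1,\ldots,F_k,\text{pt})$ of the $F_i$ with one extra point factor, and correspondingly $\JPG^{\rm red}(G_1,\ldots,G_k)$ has the same combinatorial type as $\JPG(G_1,\ldots,G_k,G_{k+1})$ where $G_{k+1}$ is a single edge (the face graph of a point, which trivially carries a pyramidal set): resolving the edge of $\ov H$ into a path of length three turns the former graph into the latter. Since resolution preserves the combinatorial type by \autoref{prop:isomgraphs}, part (2) follows from part (1). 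So I focus on part (1).

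For part (1) I would first analyse the perfect matchings of $G:=\JPG(G_1,\ldots,G_k)$. Write $u,v$ for the hub nodes of $H$, and $u_i^j,v_i^j$ for the nodes of $S(G_i)$ in the components $G_i^1,\ldots,G_i^{c_i}$. Every non-selected node of a component is matched internally, while a selected node is matched either internally or along its unique connecting edge. Because each $G_i^j$ is balanced, a matching leaving exactly one of $u_i^j,v_i^j$ to a connecting edge would force a perfect matching on the unbalanced graph $G_i^j-\{v_i^j\}$ or $G_i^j-\{u_i^j\}$, which is impossible. Hence each component is either \emph{closed} (both selected nodes internal, so $G_i^j$ carries an internal perfect matching) or \emph{open} (both selected nodes on connecting edges, so $G_i^j-\{u_i^j,v_i^j\}$ carries its perfect matching, unique since $S(G_i)$ is pyramidal). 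Tracing the connecting edges $(v,u_i^1)$, $(v_i^j,u_i^{j+1})$, $(v_i^{c_i},u)$ shows that as soon as one component of $G_i$ is open all of them are, and then $v$ is matched to $u_i^1$ and $u$ to $v_i^{c_i}$; since $u$ and $v$ are each matched once, exactly one index $i_0$ is fully open. Thus a perfect matching of $G$ is precisely the data of a choice of $i_0$, the forced matching on $G_{i_0}$, and an arbitrary perfect matching of each $G_j$, $j\neq i_0$, which is exactly the data of a vertex of $Q_{i_0}\times e_{i_0}$. This gives a bijection between perfect matchings of $G$ and vertices of $\cayley(Q_1,\ldots,Q_k)=\joinedprod(F_1,\ldots,F_k)$. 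The same analysis shows $G$ is a face graph: an internal edge of $G_i$ is covered by keeping $G_i$ closed along a perfect matching containing it and opening some $G_{i'}$ (using $k\ge 2$), and a connecting edge of $G_i$ is covered by opening $G_i$. Finally, \eqref{eq:dimension} together with $m_i=d_i+2n_i-c_i$ gives $\dim\face(G)=\sum_i d_i+k-1=\dim\cayley(Q_1,\ldots,Q_k)$.

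It remains to promote the vertex bijection to an isomorphism of face lattices, and this is the step I expect to require the most care. I would match facets on both sides. The joined product has two kinds of facets: replacing a factor $F_i$ by a facet $F_i'$ gives $\joinedprod(F_1,\ldots,F_i',\ldots,F_k)$, and (for $k\ge 3$) deleting one Cayley summand gives $\cayley(Q_1,\ldots,\widehat{Q_i},\ldots,Q_k)=F_i\times\joinedprod(F_1,\ldots,\widehat{F_i},\ldots,F_k)$, the last equality holding because every remaining $Q_j$ contains $F_i$ as a common direct factor. By \autoref{prop:BG:facets} these correspond, respectively, to removing a facet-defining set of $G_i$ inside $G_i$, and to removing the connecting edges of $G_i$, which disconnects $G_i$ from the hub and hence realizes the product face $\face(G_i)\times\face\big(\JPG(\ldots,\widehat{G_i},\ldots)\big)$. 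The main obstacle is to show that these exhaust the facets on both sides and that the induced vertex–facet incidences agree. I would argue this by induction on $\dim\face(G)=\sum_i d_i+k-1$, checking that each facet is again a (reduced) joined product, or a product of such, possibly after re-choosing pyramidal sets on the reduced factors, so that the inductive hypothesis pins down its combinatorial type; the delicate point is precisely that the facet-defining sets of $G_i$ stay compatible with pyramidality of the remaining selections. Once the two facet families are shown to exhaust all facets and to correspond compatibly with incidence under the vertex bijection, the vertex–facet incidence posets coincide, and since a polytope is determined by its vertex–facet incidences the face lattices are isomorphic, proving (1).
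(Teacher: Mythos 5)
Your analysis of the perfect matchings of $\JPG(G_1,\ldots,G_k)$ --- the open/closed dichotomy for components, the parity argument ruling out half-open components, and the conclusion that exactly one block $i_0$ is opened along its forced matching --- is correct and coincides with the first half of the paper's proof, as does your verification that $G$ is a face graph; your reduction of part (2) to part (1) by viewing $\ov H$ as a resolved point factor is also sound (the paper merely declares the second statement analogous). The problem is the final step. A bijection between vertex sets says nothing about combinatorial type, and at exactly this point your argument stops being a proof: you propose to match facets on both sides and induct on dimension, but you neither show that the two families of facets you list exhaust all facets of $\joinedprod(F_1,\ldots,F_k)$ (the paper only calls these the ``two obvious types'' and never claims exhaustiveness; for $k=2$ the second family is even empty), nor that the corresponding subgraphs exhaust the facets of $\face(G)$, nor that the vertex--facet incidences match up; you yourself flag these as ``the main obstacle'' and ``delicate'' and leave them open. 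As written, the proposal is an unfinished program for precisely the hardest part of the theorem.

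The idea you are missing is that the vertex bijection is not abstract: it is induced by an affine map, which is how the paper closes the argument in a few lines. On every vertex of $\face(G)$ the matrix coordinates indexed by the connecting edges $(v,u_i^1)$, $(v_i^j,u_i^{j+1})$, $(u,v_i^{c_i})$ of block $i$ all take the same value, since a perfect matching containing one of these edges contains all of them; moreover, when that common value is $1$, the internal block-$i$ coordinates are frozen at the unique perfect matching of $G_i-S(G_i)$. Hence one may forget all but one connecting coordinate $t_i$ per block --- deleting coordinates that are affine functions of the retained ones is an affine isomorphism on the affine hull of the vertex set --- and then apply the shear subtracting $t_i$ times the forced matching from the block-$i$ coordinates. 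The image of the vertex set is exactly the vertex set of the Cayley polytope in \eqref{eq:joinedprod}, with the factors $F_j$, $j\ne i$, appearing as the products in the summand $Q_i\times e_i$. This exhibits $\face(G)$ as affinely, hence combinatorially, isomorphic to $\joinedprod(F_1,\ldots,F_k)$, with no facet enumeration and no induction; without this observation (or a completed version of your facet-incidence plan) the proposal has a genuine gap.
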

  \begin{proof}
  We prove only the first statement. The proof of the second is analogous. 

  Let $H$ be the graph with two isolated nodes $u$ and $v$ as above. By construction, for each $i$
  the circular connection of the disjoint union of $H$ and $G_i$ has a unique perfect matching
  $M_i$. This matching is given by the edges $(u, v_i^{c_i})$, $(v,u_i^1)$, the edges
  $(v_i^j,u_i^{j+1})$ for $1\le j\le c_i-1$ and the unique perfect matching in $G_i-S(G_i)$.  Hence,
  $G$ is a face graph. Its perfect matchings are precisely products of some $M_i$ with a choice of a
  perfect matching in all $G_j$ for $j\ne i$.
  
  It remains to show that convex hull of the vertices defined by the perfect matchings in the joined
  product of the graphs is affinely isomorphic to the joined product of the $F_i$. For this, it
  suffices to note that a perfect matching that contains, for some $1\le i\le k$, one of the edges
  $(u,v_i^{c_i})$, $(v,u_i^1$ or $(v_i^j,u_i^{j+1})$ for $1\le j\le c_i-1$ necessarily also contains
  the others. Hence, up to affine isomorphism, we can forget all but one of the corresponding
  coordinates. This gives the Cayley structure~\eqref{eq:joinedprod} with the products of the
  remaining $F_j$, $j\ne i$.
\end{proof}
Note that, as a face can have more than one representation as an irreducible face graph, it does not
follow from this theorem that all graphs of faces of some $\birkhoff$ that are joined products of
some other faces are of the form given in the theorem. However, if a face is a joined product of
some polytopes, then those are again faces of some Birkhoff polytope.  We need one more lemma before
we can continue our classification.
\begin{lemma}\label{lemma:reductionlemma}
  Let $G$ be an irreducible face graph with a minimal node $v$ in the upper layer. Let $w_1, w_2$ be
  the neighbors of $v$. If $x\ne v$ is a node adjacent to $w_1$ but not to $w_2$, then the graph
  $G'$ obtained by replacing $(x,w_1)$ with $(x,w_2)$ is a face graph with the same combinatorial
  type as $G$.
\end{lemma}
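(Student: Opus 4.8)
The plan is to exhibit an explicit bijection $\phi$ between the perfect matchings of $G$ and those of $G'$ and to check that it respects the face structure. Write $e_1=(v,w_1)$ and $e_2=(v,w_2)$, and note that the only edge of $G$ that is not an edge of $G'$ is $(x,w_1)$, while the only edge of $G'$ not in $G$ is $(x,w_2)$. Since $v$ has degree $2$, every perfect matching $M$ of $G$ uses exactly one of $e_1,e_2$. If $(x,w_1)\notin M$, then $M$ is already a perfect matching of $G'$ and I set $\phi(M)=M$. If $(x,w_1)\in M$, then $w_1$ is matched to $x$, forcing $v$ to be matched to $w_2$; I set $\phi(M):=M\setminus\{(x,w_1),e_2\}\cup\{(x,w_2),e_1\}$, which is again a perfect matching of $G'$. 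The symmetric recipe (with the roles of $w_1$ and $w_2$ exchanged) defines the inverse map on matchings of $G'$, so $\phi$ is a bijection.

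Next I would verify that $G'$ is a face graph, i.e. that every edge of $G'$ lies in some perfect matching of $G'$. The map $\phi$ only ever deletes the edges $(x,w_1)$ and $e_2$ from a matching, and $(x,w_1)$ is not an edge of $G'$; hence for any edge $f$ of $G'$ with $f\neq e_2$ and $f\neq(x,w_2)$, taking a perfect matching $M$ of $G$ through $f$ (which exists, as $G$ is a face graph) yields $f\in\phi(M)$. The edge $(x,w_2)$ is covered by $\phi(M)$ for any $M$ containing $(x,w_1)$. The one remaining edge, $e_2=(v,w_2)$, is the crux: a matching through it may be destroyed by the swap, so I must produce a perfect matching of $G$ containing $e_2$ but avoiding $(x,w_1)$. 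Here irreducibility enters: by \autoref{lemma:partnerlemma} the minimal node $v$ has a partner $p\neq v$, a node adjacent to both $w_1$ and $w_2$; since $x$ is not adjacent to $w_2$ we have $p\neq x$. Choosing a perfect matching $M_p$ of $G$ through the edge $(p,w_1)$ forces $v$ to be matched to $w_2$ (so $e_2\in M_p$) and matches $w_1$ to $p\neq x$ (so $(x,w_1)\notin M_p$); thus $\phi(M_p)=M_p$ is a perfect matching of $G'$ through $e_2$. I expect this partner step to be the main obstacle, since it is the only place where the hypothesis that $G$ is irreducible is genuinely used.

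Finally, to see that $G$ and $G'$ define the same combinatorial type, I would pass to the reduction at the degree-$2$ node $v$. Contracting $v$ identifies its two neighbors $w_1$ and $w_2$ into a single node $w$; since $G$ and $G'$ differ only in whether $x$ is attached to $w_1$ or to $w_2$, and all other adjacencies to $w_1,w_2$ agree, this difference disappears after the identification. Hence $\red(G,v)$ and $\red(G',v)$ are the same face multigraph. By \autoref{prop:isomgraphs} each of $G$ and $G'$ has the same combinatorial type as its reduction at $v$, and these reductions coincide, so $G$ and $G'$ have the same combinatorial type. (Alternatively one checks directly that $\phi$ carries face subgraphs of $G$ to face subgraphs of $G'$, but the reduction argument is shorter.)
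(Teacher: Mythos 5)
Your proof is correct, and its closing step --- contracting the degree-$2$ node $v$, observing that $\red(G,v)$ and $\red(G',v)$ coincide as multigraphs, and invoking \autoref{prop:isomgraphs} --- is precisely the paper's entire proof of this lemma. What you add is the verification that $G'$ is in fact a face graph, which is part of the lemma's assertion and is, strictly speaking, needed before \autoref{prop:isomgraphs} can be applied to $G'$ at all; the paper's one-line argument leaves this implicit. Your treatment of that point is sound: the bijection $\phi$ handles every edge of $G'$ except $(v,w_2)$ routinely, and for $(v,w_2)$ you correctly isolate the crux, namely that irreducibility (via \autoref{lemma:partnerlemma}) supplies a partner $p$ of $v$ adjacent to both $w_1$ and $w_2$ with $p\ne x$, so that a perfect matching through $(p,w_1)$ contains $(v,w_2)$ while avoiding $(x,w_1)$ and hence survives into $G'$. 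So you take the same route as the paper but make rigorous the step it glosses over, and your remark that this partner argument is the only place the irreducibility hypothesis genuinely enters is accurate.
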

\begin{proof}
  The node $v$ has degree $2$ in both graphs, and the reduction at $v$ gives the same graph for $G$
  and $G'$.
\end{proof}
We are ready to classify all $d$-dimensional combinatorial types $\lattice$ with $\bdim(\lattice)\ge
2d-3$.
\begin{figure}[bt]
  \centering
  \subfigure[$d$-faces on $2d-3$ nodes: type (a),    \label{fig:sub:HighestWedgesTypeA}]{

    \begin{tikzpicture}[scale=.75]
      \foreach \x in {0,...,12} {%
        \coordinate (u\x) at (\x,1.8);%
        \coordinate (v\x) at (\x,0);%
      }%
      \foreach \x in {0,...,12} {%
        \draw[line width=1pt] (u\x) -- (v\x);
      }%

      \coordinate (uh) at (6,.9);

      \draw[line width=1pt] (u0) -- (v1);
      \draw[line width=1pt] (u1) -- (v0);
      \draw[line width=1pt] (u1) -- (v2);
      \draw[line width=1pt] (u2) -- (v1);

      \draw[line width=1pt] (u3) -- (v4);
      \draw[line width=1pt] (u4) -- (v3);

      \draw[line width=1pt] (u5) -- (v6);
      \draw[line width=1pt] (u6) -- (v5);

      \draw[line width=1pt] (u7) -- (v8);
      \draw[line width=1pt] (u8) -- (v7);

      \draw[line width=1pt] (u9) -- (v10);
      \draw[line width=1pt] (u10) -- (v9);

      \draw[line width=1pt] (u11) -- (v12);
      \draw[line width=1pt] (u12) -- (v11);

      \draw[line width=1pt] (u2) -- (v3);
      \draw[line width=1pt] (u4) -- (v5);
      \draw[line width=1pt] (u6) -- (v7);
      \draw[line width=1pt] (u8) -- (v9);
      \draw[line width=1pt] (u10) -- (v11);
      \node at (u12) {} edge [line width=1pt,bend left=10] (uh);
      \node at (v0) {} edge [line width=1pt,bend left=10] (uh);

      \foreach \x in {0,...,12} {%
        \draw[color=white,fill=red,line width=2pt] (u\x) circle (5pt);
        \draw[color=white,fill=red,line width=2pt] (v\x) circle (5pt);
      }%

      \draw[color=white,fill=red,line width=2pt] (u2) circle (5pt) node[black,above] {$u^0$};
      \draw[color=white,fill=red,line width=2pt] (u4) circle (5pt) node[black,above] {$u^1$};
      \draw[color=white,fill=red,line width=2pt] (u6) circle (5pt) node[black,above] {$u^2$};
      \draw[color=white,fill=red,line width=2pt] (u8) circle (5pt) node[black,above] {$u^3$};
      \draw[color=white,fill=red,line width=2pt] (u10) circle (5pt) node[black,above] {$u^4$};
      \draw[color=white,fill=red,line width=2pt] (u12) circle (5pt) node[black,above] {$u^5$};
      \draw[color=white,fill=red,line width=2pt] (v0) circle (5pt) node[black,below] {$v^0$};
      \draw[color=white,fill=red,line width=2pt] (v3) circle (5pt) node[black,below] {$v^1$};
      \draw[color=white,fill=red,line width=2pt] (v5) circle (5pt) node[black,below] {$v^2$};
      \draw[color=white,fill=red,line width=2pt] (v7) circle (5pt) node[black,below] {$v^3$};
      \draw[color=white,fill=red,line width=2pt] (v9) circle (5pt) node[black,below] {$v^4$};
      \draw[color=white,fill=red,line width=2pt] (v11) circle (5pt) node[black,below] {$v^5$};

    \end{tikzpicture}
  }

  \bigskip

  \subfigure[$d$-faces on $2d-3$ nodes: type (b)    \label{fig:sub:HighestWedgesTypeB}]{
    \begin{tikzpicture}[scale=.75]
      \foreach \x in {0,...,12} {%
        \coordinate (u\x) at (\x,1.8);%
        \coordinate (v\x) at (\x,0);%
      }%
      \foreach \x in {0,...,12} {%
        \draw[line width=1pt] (u\x) -- (v\x);
      }%

      \draw[line width=1pt] (u0) -- (v1);
      \draw[line width=1pt] (u1) -- (v0);

      \draw[line width=1pt] (u2) -- (v3);
      \draw[line width=1pt] (u3) -- (v2);

      \draw[line width=1pt] (u4) -- (v5);
      \draw[line width=1pt] (u5) -- (v4);

      \draw[line width=1pt] (u7) -- (v8);
      \draw[line width=1pt] (u8) -- (v7);

      \draw[line width=1pt] (u9) -- (v10);
      \draw[line width=1pt] (u10) -- (v9);

      \draw[line width=1pt] (u11) -- (v12);
      \draw[line width=1pt] (u12) -- (v11);

      \draw[line width=1pt] (u1) -- (v2);
      \draw[line width=1pt] (u3) -- (v4);
      \draw[line width=1pt] (u5) -- (v6);
      \draw[line width=1pt] (u6) -- (v6);
      \draw[line width=1pt] (u6) -- (v7);
      \draw[line width=1pt] (u8) -- (v9);
      \draw[line width=1pt] (u10) -- (v11);
      \draw[line width=1pt] (u6) -- (v0);
      \draw[line width=1pt] (u12) -- (v6);

      \foreach \x in {0,...,12} {%
        \draw[color=white,fill=red,line width=2pt] (u\x) circle (5pt);
        \draw[color=white,fill=red,line width=2pt] (v\x) circle (5pt);
      }%
      \draw[color=white,fill=red,line width=2pt] (v0) circle (5pt) node[black,below] {$v_{0}^0$};
      \draw[color=white,fill=red,line width=2pt] (v2) circle (5pt) node[black,below] {$v_{0}^1$};
      \draw[color=white,fill=red,line width=2pt] (v4) circle (5pt) node[black,below] {$v_{0}^2$};
      \draw[color=white,fill=red,line width=2pt] (v7) circle (5pt) node[black,below] {$v_{1}^0$};
      \draw[color=white,fill=red,line width=2pt] (v9) circle (5pt) node[black,below] {$v_{1}^1$};
      \draw[color=white,fill=red,line width=2pt] (v11) circle (5pt) node[black,below] {$v_{1}^2$};
      \draw[color=white,fill=red,line width=2pt] (v6) circle (5pt) node[black,below,yshift=-4pt] {$v$};

      \draw[color=white,fill=red,line width=2pt] (u1) circle (5pt) node[black,above] {$u_{0}^0$};
      \draw[color=white,fill=red,line width=2pt] (u3) circle (5pt) node[black,above] {$u_{0}^1$};
      \draw[color=white,fill=red,line width=2pt] (u5) circle (5pt) node[black,above] {$u_{0}^2$};
      \draw[color=white,fill=red,line width=2pt] (u8) circle (5pt) node[black,above] {$u_{1}^0$};
      \draw[color=white,fill=red,line width=2pt] (u10) circle (5pt) node[black,above] {$u_{1}^1$};
      \draw[color=white,fill=red,line width=2pt] (u12) circle (5pt) node[black,above] {$u_{1}^2$};
      \draw[color=white,fill=red,line width=2pt] (u6) circle (5pt) node[black,above,yshift=4pt] {$u$};
    \end{tikzpicture}
  } 

  \bigskip

  \subfigure[$d$-faces on $2d-3$ nodes: type (c)    \label{fig:sub:HighestWedgesTypeC}]{
    \begin{tikzpicture}[scale=.74]
      \foreach \x in {0,...,18} {%
        \coordinate (u\x) at (\x,1.8);%
        \coordinate (v\x) at (\x,0);%
      }%

      \coordinate (uh) at (12,.9);

      \foreach \x in {0,...,5} {%
        \draw[line width=1pt] (u\x) -- (v\x);
      }%
      \foreach \x in {7,...,18} {%
        \draw[line width=1pt] (u\x) -- (v\x);
      }%

      \draw[line width=1pt] (u0) -- (v1);
      \draw[line width=1pt] (u1) -- (v0);

      \draw[line width=1pt] (u2) -- (v3);
      \draw[line width=1pt] (u3) -- (v2);

      \draw[line width=1pt] (u4) -- (v5);
      \draw[line width=1pt] (u5) -- (v4);

      \draw[line width=1pt] (u7) -- (v8);
      \draw[line width=1pt] (u8) -- (v7);

      \draw[line width=1pt] (u9) -- (v10);
      \draw[line width=1pt] (u10) -- (v9);

      \draw[line width=1pt] (u11) -- (v12);
      \draw[line width=1pt] (u12) -- (v11);

      \draw[line width=1pt] (u13) -- (v14);
      \draw[line width=1pt] (u14) -- (v13);

      \draw[line width=1pt] (u15) -- (v16);
      \draw[line width=1pt] (u16) -- (v15);

      \draw[line width=1pt] (u17) -- (v18);
      \draw[line width=1pt] (u18) -- (v17);

      \draw[line width=1pt] (u1) -- (v2);
      \draw[line width=1pt] (u3) -- (v4);
      \draw[line width=1pt] (u5) -- (v6);
      \draw[line width=1pt] (u6) -- (v7);
      \draw[line width=1pt] (u8) -- (v9);
      \draw[line width=1pt] (u10) -- (v11);
      \draw[line width=1pt] (u6) -- (v0);
      \draw[line width=1pt] (u12) -- (v6);

      \draw[line width=1pt] (u6) -- (v13);
      \draw[line width=1pt] (u14) -- (v15);
      \draw[line width=1pt] (u16) -- (v17);
      \node at (u18) {} edge [line width=1pt,bend left=10] (uh);
      \node at (v6) {} edge [line width=1pt,bend left=10] (uh);

      \foreach \x in {0,...,18} {%
        \draw[color=white,fill=red,line width=2pt] (u\x) circle (5pt);
        \draw[color=white,fill=red,line width=2pt] (v\x) circle (5pt);
      }%
      \draw[color=white,fill=red,line width=2pt] (v0) circle (5pt) node[black,below] {$v_{0}^0$};
      \draw[color=white,fill=red,line width=2pt] (v2) circle (5pt) node[black,below] {$v_{0}^1$};
      \draw[color=white,fill=red,line width=2pt] (v4) circle (5pt) node[black,below] {$v_{0}^2$};
      \draw[color=white,fill=red,line width=2pt] (v7) circle (5pt) node[black,below] {$v_{1}^0$};
      \draw[color=white,fill=red,line width=2pt] (v9) circle (5pt) node[black,below] {$v_{1}^1$};
      \draw[color=white,fill=red,line width=2pt] (v11) circle (5pt) node[black,below] {$v_{1}^2$};
      \draw[color=white,fill=red,line width=2pt] (v13) circle (5pt) node[black,below] {$v_{2}^2$};
      \draw[color=white,fill=red,line width=2pt] (v15) circle (5pt) node[black,below] {$v_{2}^2$};
      \draw[color=white,fill=red,line width=2pt] (v17) circle (5pt) node[black,below] {$v_{2}^2$};
      \draw[color=white,fill=red,line width=2pt] (v6) circle (5pt) node[black,below,yshift=-4pt] {$v$};

      \draw[color=white,fill=red,line width=2pt] (u1) circle (5pt) node[black,above] {$u_{0}^0$};
      \draw[color=white,fill=red,line width=2pt] (u3) circle (5pt) node[black,above] {$u_{0}^1$};
      \draw[color=white,fill=red,line width=2pt] (u5) circle (5pt) node[black,above] {$u_{0}^2$};
      \draw[color=white,fill=red,line width=2pt] (u8) circle (5pt) node[black,above] {$u_{1}^0$};
      \draw[color=white,fill=red,line width=2pt] (u10) circle (5pt) node[black,above] {$u_{1}^1$};
      \draw[color=white,fill=red,line width=2pt] (u12) circle (5pt) node[black,above] {$u_{1}^2$};
      \draw[color=white,fill=red,line width=2pt] (u14) circle (5pt) node[black,above] {$u_{2}^2$};
      \draw[color=white,fill=red,line width=2pt] (u16) circle (5pt) node[black,above] {$u_{2}^2$};
      \draw[color=white,fill=red,line width=2pt] (u18) circle (5pt) node[black,above] {$u_{2}^2$};
      \draw[color=white,fill=red,line width=2pt] (u6) circle (5pt) node[black,above,yshift=4pt] {$u$};
    \end{tikzpicture}
  } 
  \caption{$d$-dimensional face graphs on $2d-3$ nodes.}
  \label{fig:elHighestWedges}
\end{figure}
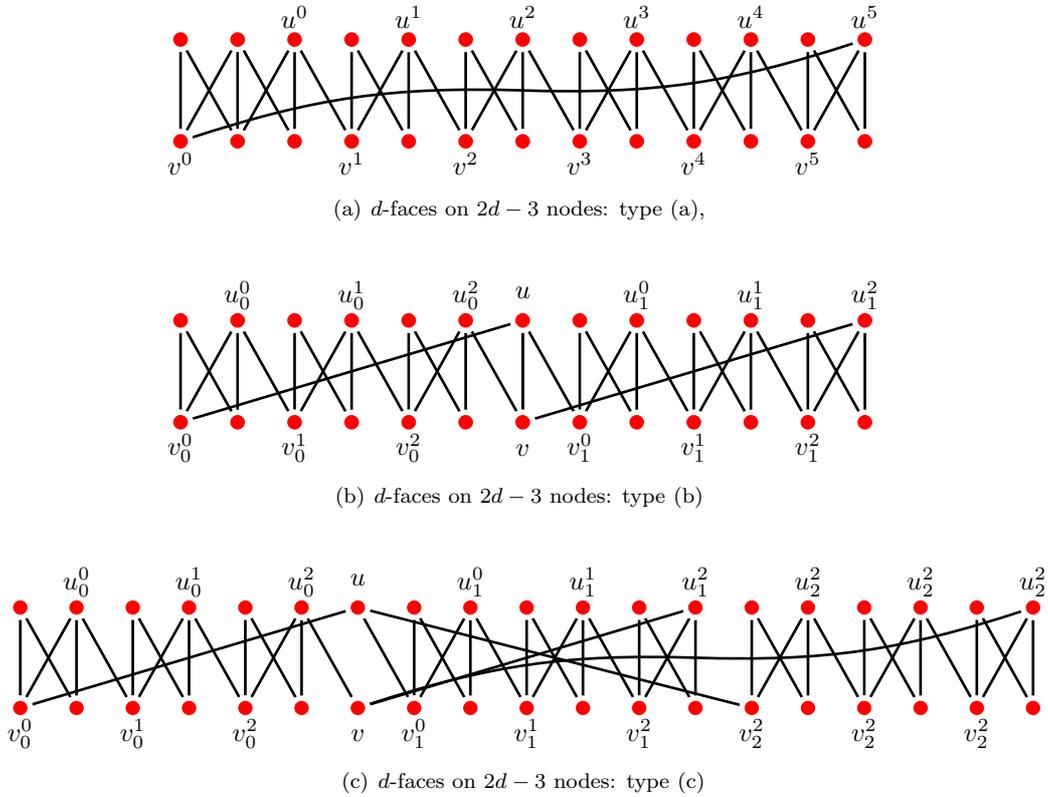
\begin{theorem}\label{thm:ClassWedges}
  Let $\lattice$ be a combinatorial type of a $d$-dimensional face with $\bdim(\lattice)\ge
  2d-3$. Then $\lattice$ is of one of the following four types.
  \begin{enumerate}
  \item\label{thm:ClassWedges:TypeA} Pyramid over a product of a cube and a triangle. See
    \autoref{fig:elHighestWedges}\subref{fig:sub:HighestWedgesTypeA}.
  \item\label{thm:ClassWedges:TypeC} A reduced joined product of two cubes (of possibly different
    dimensions).  See \autoref{fig:elHighestWedges}\subref{fig:sub:HighestWedgesTypeB}.
  \item\label{thm:ClassWedges:TypeD} A joined product of three cubes (of possibly different
    dimensions).  See \autoref{fig:elHighestWedges}\subref{fig:sub:HighestWedgesTypeC}.
  \end{enumerate}
\end{theorem}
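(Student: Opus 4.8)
The plan is to extend the edge- and degree-counting of \autoref{prop:cube}, \autoref{prop:2dm1} and \autoref{prod:2dm2} by one more step and then translate the resulting graphs through the joined-product dictionary. By \autoref{thm:CompleteClassification} I may assume $\bdim(\lattice)=2d-3$ exactly. If the face $F$ is a product $F_1\times F_2$ with $\dim F_i=d_i$, then, since products correspond to disconnected graphs and $\bdim$ is therefore additive over products, the defects $r_i:=2d_i-\bdim(F_i)\ge 0$ satisfy $r_1+r_2=3$; each factor is a cube ($r_i=0$), one of the types of \autoref{prop:2dm1} and \autoref{prod:2dm2} ($r_i\in\{1,2\}$), or inductively one of the three types below ($r_i=3$), exactly as in the product analysis of \autoref{thm:CompleteClassification}. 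This produces the products making up the remaining family, so I now assume $F$ is not a product and hence has a connected face graph. A representative $G$ on the minimal number $2d-3$ of nodes per layer must be irreducible: a reducible node has two neighbours whose only common neighbour is itself, so the reduction merges them \emph{without} creating a double edge and yields a simple face graph on $2d-4$ nodes of the same type by \autoref{prop:isomgraphs}, contradicting minimality. Thus $G$ is connected and irreducible with $n=2d-3$ and $m=d+2n-1=5d-7$ edges; the finitely many cases $d\le 4$ (where $n\le d+1$) I dispatch directly, and assume $d\ge 5$, so that $n>d+1$ and $n\ge 7$.

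I then fix the degree sequence. By \autoref{prop:degree} every node has degree at most $2d-n+1=4$, and if $k_2,k_3,k_4$ count the nodes of each degree in one layer, then $k_2+k_3+k_4=2d-3$ and $2k_2+3k_3+4k_4=5d-7$ give $k_3+2k_4=d-1$ and $k_2=d-2+k_4$. Because $n\ge 7$, the \emph{moreover} clause of \autoref{cor:partnerbound} shows that no node can be a partner for its full quota (that would make it and its partners a whole layer); hence a cubic node is a partner for at most one, and a degree-$4$ node for at most two, minimal nodes. As every minimal node has a partner by \autoref{lemma:partnerlemma}, the available capacity $k_3+2k_4=d-1$ must cover the demand $k_2=d-2+k_4$, forcing $k_4\le 1$. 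In the borderline case $k_4=1$ the count is so tight that one of the opposite-layer neighbours of the degree-$4$ node is minimal, and transferring a suitable edge off it via \autoref{lemma:reductionlemma} lowers its degree to $3$ while preserving the combinatorial type. After this I may assume $G$ has maximum degree $3$, with exactly $d-2$ minimal and $d-1$ cubic nodes in each layer -- one cubic node more than in the situation of \autoref{prod:2dm2}, and it is this surplus that splits the classification into three.

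Next comes the structural heart, modelled on the proof of \autoref{prod:2dm2}. Each minimal node, its partner, and the two length-two paths joining them form a ``square'' block, the face graph of a one-dimensional cube factor; a partner-incidence count (each cubic node partners at most one minimal node, each minimal node at least one) lets me organise $G$ into such square blocks and at most one triangle block, exhausting the $2d-3$ nodes, with all other edges running between blocks at the remaining cubic nodes. Since $G$ is a face graph, every inter-block edge lies in a perfect matching, and the fact that the union of two matchings is a disjoint union of edges and cycles forces the blocks to be glued in a single cyclic order, with one or two distinguished cubic nodes where strands may branch.

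Reading this cyclic gluing through the constructions of \autoref{subsec:2d-3} completes the proof. A single cycle of blocks on $2d-3$ nodes must contain exactly one triangle (a cycle of segments alone would have $2d-2$ nodes, the situation of \autoref{prod:2dm2}); such a graph is the circular connection $\circular(G')$ of segments and one triangle with a pyramidal node choice (\autoref{def:pyramidal}), so the face is the pyramid over a product of a cube and a triangle, type~\ref{thm:ClassWedges:TypeA}. If instead all blocks are segments they branch at the distinguished cubic nodes $u$ and $v$: when $u$ and $v$ are adjacent the graph is $\JPG^{\rm red}$ of two cubes, type~\ref{thm:ClassWedges:TypeC}, and otherwise it is $\JPG$ of three cubes, type~\ref{thm:ClassWedges:TypeD}; the joined-product theorem then reads off the combinatorial type. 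The main obstacle is precisely this global step -- proving that the perfect-matching condition admits no other gluing, so that exactly one triangle, respectively exactly two or three segment strands, can occur. A second, more technical point is justifying the degree-$4$ reduction of the second paragraph, namely that \autoref{lemma:reductionlemma} really applies to the unique degree-$4$ node and that iterating the edge transfer terminates in a maximum-degree-$3$ graph.
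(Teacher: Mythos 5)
Your strategy is the same as the paper's own proof: bound the maximum degree by $4$ via \autoref{prop:degree}, use the \emph{moreover} clause of \autoref{cor:partnerbound} to get the partner-capacity count and hence $k_4\le 1$, eliminate the degree-$4$ node by an edge transfer using \autoref{lemma:reductionlemma}, decompose the resulting maximum-degree-$3$ graph into square blocks spanned by each minimal node and its partner, and then argue that the blocks can only be glued circularly, yielding the three types. Your preliminary steps are correct and in places more explicit than the paper (the reduction from minimality to irreducibility, and the product case, which the paper leaves implicit in the theorem statement). However, the two points you flag at the end as ``obstacles'' are precisely where the paper's proof does its real work, and your proposal does not close them; as written it is an outline of the argument rather than a proof.

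Concretely: (1) for the degree-$4$ elimination, your assertion that ``the count is so tight that one of the opposite-layer neighbours of the degree-$4$ node is minimal'' is not enough. The paper must split into two configurations according to how the two partner-paths through the degree-$4$ node $v$ meet its four neighbours $w_1,\dots,w_4$. In the first configuration a partner-availability argument shows some $w_i$ is minimal and \autoref{lemma:reductionlemma} applies; in the second configuration the same argument only leaves the possibility that the ``wrong'' neighbour $w_4$ is minimal, and that subcase must be ruled out entirely by a separate contradiction (the middle neighbour $w_2$ can neither have degree $4$, nor be a partner, nor fail to be one). (2) For the global step, the observation that a union of two perfect matchings is a disjoint union of edges and cycles does not by itself force circular gluing. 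The paper's argument is a case analysis on the two leftover nodes $y_u$ and $z_l$ not absorbed into square blocks: whether they are adjacent and whether $z_l$ is minimal or cubic. In each case an \emph{exact} edge count --- the block subgraph has exactly $d-2$, respectively $d$, respectively $d+1$, fewer edges than $G$, which is exactly the number needed to connect its components in one cycle, respectively in two or three strands through the hub --- combined with the requirement that every edge lie in a perfect matching, excludes all other gluings; moreover, in the non-adjacent case one must first prove that $z_l=y_l$ has degree $3$ (a minimal $z_l$ would force a forbidden degree-$4$ node). In particular, the triangle block of your type~\ref{thm:ClassWedges:TypeA} only emerges from this case analysis, by absorbing $y_u$ and $z_l$ into one of the square blocks; your claim that a ``partner-incidence count'' produces at most one triangle block beforehand is not substantiated. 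The missing content is therefore not routine verification but the main combinatorial work of the theorem.
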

Note that the theorem does not claim that these faces do really only appear in $\birkhoff[2d-3]$,
but only that, if a face appears in $\birkhoff[2d-3]$ for the first time, then it must be of one of
these types.  The stronger statement is certainly true for the first cases, as it contains the
product of a $(d-3)$-cube with a triangle as a proper face, and this cannot be represented with less
nodes.
\begin{proof}
  We classify the possible face graphs. The previous theorem translates this into combinatorial
  types of faces. An irreducible face graph on $2d-3$ nodes has $5d-7$ edges. We have either $d-1$
  or $d-2$ minimal nodes, and the maximal degree of a node is $4$. By counting we conclude that
  there is at most one node of degree $4$ in each layer. Further, if there is a node of degree $4$
  in one layer, then each node of degree at least $3$ in this layer is partner for some minimal
  node. In the other case at most one of the nodes of degree $3$ is not a partner.  We show first
  that we can reduce to the case that the maximal degree in $G$ is $3$.

  Let $v$ be a node of degree $4$ in the upper layer with neighbors $w_1, \ldots, w_4$. Then $v$ is
  partner for two nodes $u_1, u_2$. For both there are two disjoint paths of length $2$ connecting
  them to $v$. Let $w_1, w_2$ be the intermediate nodes of the paths to $u_1$. The intermediate
  nodes of the paths to $u_2$ cannot coincide with those two, as otherwise $(v,w_1)$ and $(v,w_2)$
  are not part of a perfect matching. So we are left with the cases sketched in
  \figuresref{fig:deg4nodes}\subref{fig:sub:deg4node1}
  and~\ref{fig:deg4nodes}\subref{fig:sub:deg4node2}, up to additional edges incident to some of the
  $w_i$.

Consider first the case given in \autoref{fig:deg4nodes}\subref{fig:sub:deg4node1}. Assume that the
degree of $w_1, \ldots, w_4$ is at least $3$. As all but at most one node are partner for some
minimal node, and at most one node has degree $4$, we can pick one of $w_1, \ldots, w_4$ that has
degree $3$ and is a partner for some minimal node $u$. Assume this is $w_1$. We need two different
paths of length $2$ connecting $w_1$ to its partner. Hence, one of the paths must use one of the
edges $(w_1,u_1)$ or $(w_1,v)$. Thus, the partner $u$ must be one of the nodes $w_2, w_3, w_4$. This
contradicts the assumption that all four nodes have degree at least $3$. So at least one of $w_1,
w_2, w_3, w_4$ has degree $2$. Assume that this is $w_2$.  Hence, we can use
\autoref{lemma:reductionlemma} for $w_2$ and $u_1, v$ to move one of the edges incident to $v$ to
$u_1$ to obtain a graph with maximal degree $3$ in the upper layer. See
\autoref{fig:deg4nodesproof}. By our assumption that the corresponding combinatorial type of the
face has Birkhoff dimension $\bdim(\lattice)\ge 2d-3$ we know that the graph remains irreducible.

Now consider the graph given in \autoref{fig:deg4nodes}\subref{fig:sub:deg4node2}. By the same
argument as above at least one of $w_1, \ldots, w_4$ has degree $2$. If this is $w_1$ or $w_3$ we
can proceed as above and move an edge incident to $v$ to either $u_1$ or $u_2$. So assume that only
$w_4$ has degree $2$. So $w_1, w_2, w_3$ have degree at least $3$. If $w_2$ has degree $4$, then
it is partner for two minimal nodes. So at least one of $w_1, w_3$ would have degree $2$, contrary
to our assumption. So $w_2$ has degree $3$. If it were a partner for some minimal node, then this
would have to be $w_1$ or $w_3$, again contradicting our assumption. So $w_2$ is not a partner of
some minimal node. By assumption this implies that the maximal degree in the lower layer is $3$,
and both $w_1$ and $w_3$ are partner of some minimal node. By construction, this can only be
$w_4$. But, again by assumption, a minimal node has a unique partner, so this case does not occur.

We can repeat the same argument for the lower layer. This transforms $G$ into a face graph whose
combinatorial type is combinatorially isomorphic to the original one, but the graph has maximal
degree $3$ in both layers.  

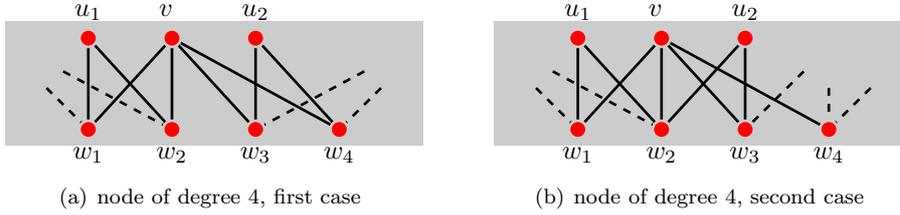
\begin{figure}[t]
  \centering
  \subfigure[node of degree $4$, first case \label{fig:sub:deg4node1}]{
    \begin{tikzpicture}[scale=1.1]
      \coordinate (g111) at (0,0);
      \coordinate (g112) at (5,0);
      \coordinate (g121) at (0,1.5);
      \coordinate (g122) at (5,1.5);
      
      \coordinate (g111b) at (0,-.3);
      \coordinate (g112b) at (5,-.3);

      \coordinate (u2) at (1,1.3);%
      \coordinate (u1) at (2,1.3);%
      \coordinate (u3) at (3,1.3);%
      
      \coordinate (v1) at (1,.2);%
      \coordinate (v2) at (2,.2);%
      \coordinate (v3) at (3,.2);%
      \coordinate (v4) at (4,.2);%
      
      \coordinate (w1) at (.5,.7);%
      \coordinate (w2) at (4.5,.7);%
      \coordinate (w3) at (4.3,.9);%
      \coordinate (w4) at (.7,.9);%

      \fill[color=black!20] (g111) -- (g112) -- (g122) -- (g121) -- cycle;
      
      \draw[line width=1pt] (u1) -- (v1);
      \draw[line width=1pt] (u1) -- (v2);
      \draw[line width=1pt] (u1) -- (v3);
      \draw[line width=1pt] (u1) -- (v4);
      
      \draw[line width=1pt] (u2) -- (v1);
      \draw[line width=1pt] (u2) -- (v2);
      
      \draw[line width=1pt] (u3) -- (v3);
      \draw[line width=1pt] (u3) -- (v4);
      
      \draw[line width=1pt,dashed] (w1) -- (v1);
      \draw[line width=1pt,dashed] (w2) -- (v4);
      \draw[line width=1pt,dashed] (w4) -- (v2);
      \draw[line width=1pt,dashed] (w3) -- (v3);

      \draw[color=black!20,fill=red,line width=1pt] (u1) circle (3pt) node[black,above,yshift=3pt] {$v\phantom{_1}$};
      \draw[color=black!20,fill=red,line width=1pt] (u2) circle (3pt) node[black,above,yshift=3pt] {$u_1$};
      \draw[color=black!20,fill=red,line width=1pt] (u3) circle (3pt) node[black,above,yshift=3pt] {$u_2$};
      \draw[color=black!20,fill=red,line width=1pt] (v1) circle (3pt) node[black,below,yshift=-3pt] {$w_1$};
      \draw[color=black!20,fill=red,line width=1pt] (v2) circle (3pt) node[black,below,yshift=-3pt] {$w_2$};
      \draw[color=black!20,fill=red,line width=1pt] (v3) circle (3pt) node[black,below,yshift=-3pt] {$w_3$};
      \draw[color=black!20,fill=red,line width=1pt] (v4) circle (3pt) node[black,below,yshift=-3pt] {$w_4$};
    \end{tikzpicture}
  }\qquad
  \subfigure[node of degree $4$, second case \label{fig:sub:deg4node2}]{
    \begin{tikzpicture}[scale=1.1]
      \coordinate (g111) at (0,0);
      \coordinate (g112) at (5,0);
      \coordinate (g121) at (0,1.5);
      \coordinate (g122) at (5,1.5);
      
      \coordinate (g111b) at (0,-.3);
      \coordinate (g112b) at (5,-.3);

      \coordinate (u2) at (1,1.3);%
      \coordinate (u1) at (2,1.3);%
      \coordinate (u3) at (3,1.3);%
      
      \coordinate (v1) at (1,.2);%
      \coordinate (v2) at (2,.2);%
      \coordinate (v3) at (3,.2);%
      \coordinate (v4) at (4,.2);%
      
      \coordinate (w1) at (.5,.7);%
      \coordinate (w2) at (4.5,.7);%
      \coordinate (w3) at (4,.7);%
      \coordinate (w4) at (.7,.9);%
      \coordinate (w5) at (3.7,.9);%

      \fill[color=black!20] (g111) -- (g112) -- (g122) -- (g121) -- cycle;
      
      \draw[line width=1pt] (u1) -- (v1);
      \draw[line width=1pt] (u1) -- (v2);
      \draw[line width=1pt] (u1) -- (v3);
      \draw[line width=1pt] (u3) -- (v2);
      
      \draw[line width=1pt] (u2) -- (v1);
      \draw[line width=1pt] (u2) -- (v2);
      
      \draw[line width=1pt] (u3) -- (v3);
      
      \draw[line width=1pt,dashed] (w1) -- (v1);
      \draw[line width=1pt] (u1) -- (v4);

      \draw[line width=1pt,dashed] (w2) -- (v4);
      \draw[line width=1pt,dashed] (w3) -- (v4);
      \draw[line width=1pt,dashed] (w4) -- (v2);
      \draw[line width=1pt,dashed] (w5) -- (v3);
      
      \draw[color=black!20,fill=red,line width=1pt] (u1) circle (3pt) node[black,above,yshift=3pt] {$v\phantom{_1}$};
      \draw[color=black!20,fill=red,line width=1pt] (u2) circle (3pt) node[black,above,yshift=3pt] {$u_1$};
      \draw[color=black!20,fill=red,line width=1pt] (u3) circle (3pt)
      node[black,above,yshift=3pt] {$u_2$};
      \draw[color=black!20,fill=red,line width=1pt] (v1) circle (3pt) node[black,below,yshift=-3pt] {$w_1$};
      \draw[color=black!20,fill=red,line width=1pt] (v2) circle (3pt) node[black,below,yshift=-3pt] {$w_2$};
      \draw[color=black!20,fill=red,line width=1pt] (v3) circle (3pt) node[black,below,yshift=-3pt] {$w_3$};
      \draw[color=black!20,fill=red,line width=1pt] (v4) circle (3pt) node[black,below,yshift=-3pt] {$w_4$};
    \end{tikzpicture}    
  }
  \caption{The two cases of a node of degree $4$ in
    \autoref{thm:ClassWedges}. The dashed partial edges indicate
    that there may or may not be additional edges incident to some of
    the $w_i$.}
  \label{fig:deg4nodes}
\end{figure}
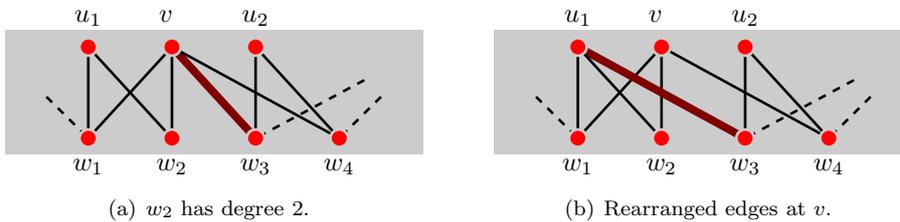
\begin{figure}[b]
  \centering \subfigure[$w_2$ has degree
  $2$. \label{fig:sub:deg4node3}]{
    \begin{tikzpicture}[scale=1.1]
      \coordinate (g111) at (0,0);
      \coordinate (g112) at (5,0);
      \coordinate (g121) at (0,1.5);
      \coordinate (g122) at (5,1.5);
      
      \coordinate (g111b) at (0,-.3);
      \coordinate (g112b) at (5,-.3);

      \coordinate (u2) at (1,1.3);%
      \coordinate (u1) at (2,1.3);%
      \coordinate (u3) at (3,1.3);%
      
      \coordinate (v1) at (1,.2);%
      \coordinate (v2) at (2,.2);%
      \coordinate (v3) at (3,.2);%
      \coordinate (v4) at (4,.2);%
      
      \coordinate (w1) at (.5,.7);%
      \coordinate (w2) at (4.5,.7);%
      \coordinate (w3) at (4.3,.9);%
      \coordinate (w4) at (.7,.9);%

      \fill[color=black!20] (g111) -- (g112) -- (g122) -- (g121) -- cycle;
      
      \draw[line width=1pt] (u1) -- (v1);
      \draw[line width=3pt,color=red!50!black] (u1) -- (v3);
      \draw[line width=1pt] (u1) -- (v2);
      \draw[line width=1pt] (u1) -- (v4);
      
      \draw[line width=1pt] (u2) -- (v1);
      \draw[line width=1pt] (u2) -- (v2);
      
      \draw[line width=1pt] (u3) -- (v3);
      \draw[line width=1pt] (u3) -- (v4);
      
      \draw[line width=1pt,dashed] (w1) -- (v1);
      \draw[line width=1pt,dashed] (w2) -- (v4);
      \draw[line width=1pt,dashed] (w3) -- (v3);

      \draw[color=black!20,fill=red,line width=1pt] (u1) circle (3pt) node[black,above,yshift=4pt] {$v\phantom{_1}$};
      \draw[color=black!20,fill=red,line width=1pt] (u2) circle (3pt) node[black,above,yshift=4pt] {$u_1$};
      \draw[color=black!20,fill=red,line width=1pt] (u3) circle (3pt) node[black,above,yshift=4pt] {$u_2$};
      \draw[color=black!20,fill=red,line width=1pt] (v1) circle (3pt) node[black,below,yshift=-4pt] {$w_1$};
      \draw[color=black!20,fill=red,line width=1pt] (v2) circle (3pt) node[black,below,yshift=-4pt] {$w_2$};
      \draw[color=black!20,fill=red,line width=1pt] (v3) circle (3pt) node[black,below,yshift=-4pt] {$w_3$};
      \draw[color=black!20,fill=red,line width=1pt] (v4) circle (3pt) node[black,below,yshift=-4pt] {$w_4$};
    \end{tikzpicture}
  }\qquad
  \subfigure[Rearranged edges at $v$. \label{fig:sub:deg4node4}]{
    \begin{tikzpicture}[scale=1.1]
      \coordinate (g111) at (0,0);
      \coordinate (g112) at (5,0);
      \coordinate (g121) at (0,1.5);
      \coordinate (g122) at (5,1.5);
      
      \coordinate (g111b) at (0,-.3);
      \coordinate (g112b) at (5,-.3);

      \coordinate (u2) at (1,1.3);%
      \coordinate (u1) at (2,1.3);%
      \coordinate (u3) at (3,1.3);%
      
      \coordinate (v1) at (1,.2);%
      \coordinate (v2) at (2,.2);%
      \coordinate (v3) at (3,.2);%
      \coordinate (v4) at (4,.2);%
      
      \coordinate (w1) at (.5,.7);%
      \coordinate (w2) at (4.5,.7);%
      \coordinate (w3) at (4.3,.9);%
      \coordinate (w4) at (.7,.9);%

      \fill[color=black!20] (g111) -- (g112) -- (g122) -- (g121) -- cycle;
      
      \draw[line width=1pt] (u1) -- (v1);
      \draw[line width=1pt] (u1) -- (v2);
      \draw[line width=3pt,color=red!50!black] (u2) -- (v3);
      \draw[line width=1pt] (u1) -- (v4);
      
      \draw[line width=1pt] (u2) -- (v1);
      \draw[line width=1pt] (u2) -- (v2);
      
      \draw[line width=1pt] (u3) -- (v3);
      \draw[line width=1pt] (u3) -- (v4);
      
      \draw[line width=1pt,dashed] (w1) -- (v1);
      \draw[line width=1pt,dashed] (w2) -- (v4);
      \draw[line width=1pt,dashed] (w3) -- (v3);

      \draw[color=black!20,fill=red,line width=1pt] (u1) circle (3pt) node[black,above,yshift=4pt] {$v\phantom{_1}$};
      \draw[color=black!20,fill=red,line width=1pt] (u2) circle (3pt) node[black,above,yshift=4pt] {$u_1$};
      \draw[color=black!20,fill=red,line width=1pt] (u3) circle (3pt) node[black,above,yshift=4pt] {$u_2$};
      \draw[color=black!20,fill=red,line width=1pt] (v1) circle (3pt) node[black,below,yshift=-4pt] {$w_1$};
      \draw[color=black!20,fill=red,line width=1pt] (v2) circle (3pt) node[black,below,yshift=-4pt] {$w_2$};
      \draw[color=black!20,fill=red,line width=1pt] (v3) circle (3pt) node[black,below,yshift=-4pt] {$w_3$};
      \draw[color=black!20,fill=red,line width=1pt] (v4) circle (3pt) node[black,below,yshift=-4pt] {$w_4$};
    \end{tikzpicture}
  }
  \caption{Moving one edge incident to $v$ in the proof of
    \autoref{thm:ClassWedges}. The dashed partial edges indicate
    that there may or may not be additional edges incident to some of
    the $w_i$.}
  \label{fig:deg4nodesproof}
\end{figure}
Hence, in the following we can assume that the maximal degree of a node in $G$ is $3$. In that case
the graph necessarily has $d-2$ minimal nodes in each layer, and $d-1$ nodes of degree $3$. Pick a
partner $p^u_i$ for each minimal node $x_i^u$ in the upper, and $p^l_i$ for each minimal node
$x^l_i$ in the lower layer, $1\le i\le d-2$. The $p^u_i$ are pairwise distinct as a node of degree
$k\ge 3$ is partner for at most $k-2$ nodes (unless it is the only partner in the graph, see
\autoref{cor:partnerbound}). See also \autoref{fig:decomp} for two examples. Let $y_u$ and $y_l$ be
the remaining node in each layer. Let $N_i$ be the induced graph on $p^u_i, x^u_i$ and the two paths
of length $2$ between them. The $N_i$, $1\le i\le d-2$, are pairwise disjoint, as otherwise there
would be a node of degree $4$ in the lower layer. Let $z_l$ be the node in the lower layer not
contained in any $N_i$. We distinguish various cases.
  \begin{enumerate}
  \item $y_u$ and $z_l$ are connected, and $z_l$ is minimal. See~\autoref{fig:sub:case1}. In this
    case $z_l$ has a partner $p_l$ contained in some $N_i$. We may assume that this is $N_1$. We
    replace $N_1$ by the graph induced by $N_1$, $y_u$ and $z_l$. Then $N_1$ has $6$ nodes and at
    least $7$ edges. Hence, $N:=\bigcup N_i$ defines a face subgraph of $G$ with the same number of
    nodes, with $d-2$ components, and at most $d-2$ edges less than $G$. Thus, $N$ and $G$ differ by
    precisely $d-2$ edges. As any edge must be contained in a perfect matching, those edges must
    connect the components of $H$ in a circular way. Further, $y_u$ and $y_l$ have degree $2$ in
    $N_1$, but degree $3$ in $G$, so $N_1-\{y_u, y_l\}$ has a unique perfect matching. Hence, $G$ is
    a pyramid over $N$, and $N$ is a product of segments and a triangle.
  \item $y_u$ and $z_l$ are connected, and both have degree $3$.  See~\autoref{fig:sub:case2}. In
    that case, $z_l=y_l$. Let $N_0$ be the subgraph induced by $y_u$ and $y_l$ (with one edge). Then
    $N:=\bigcup N_i$ is a face subgraph of $G$ with $4d-7$ edges. The only way to obtain a connected
    irreducible face graph from $N$ by adding $d$ edges is to split the set of $N_i$, $i\ge 1$ into
    two nontrivial sets and connect both with $N_0$ circularly. This gives a reduced joined circular
    product of two cubes (not necessarily of equal dimension)
\begin{figure}[t]
\centering
  \subfigure[The graph of case (1) in the proof of \autoref{thm:ClassWedges}\label{fig:sub:case1}]{
    \begin{tikzpicture}[scale=.75]
      \foreach \x in {-2,...,8} {%
        \coordinate (u\x) at (1.5*\x,1.8);%
        \coordinate (v\x) at (1.5*\x,0);%
      }%

      \fill[color=black!20] (-.3,2.1) -- (-.3,-.3) -- (1.8,-.3) -- (1.8,2.1) -- cycle;
      \fill[color=black!20] (4.2,2.1) -- (4.2,-.3) -- (6.3,-.3) -- (6.3,2.1) -- cycle;
      \fill[color=black!20] (7.2,2.1) -- (7.2,-.3) -- (9.3,-.3) -- (9.3,2.1) -- cycle;

      \foreach \x in {0,...,6} {%
        \draw[line width=1pt] (u\x) -- (v\x);
      }

      \draw[line width=1pt] (u0) -- (v1);
      \draw[line width=1pt] (u1) -- (v0);
      \draw[line width=1pt] (u1) -- (v2);
      \draw[line width=1pt] (u2) -- (v1);

      \draw[line width=1pt] (u3) -- (v4);
      \draw[line width=1pt] (u4) -- (v3);

      \draw[line width=1pt] (u5) -- (v6);
      \draw[line width=1pt] (u6) -- (v5);

      \draw[line width=1pt] (u2) -- (v3);
      \draw[line width=1pt] (u4) -- (v5);
      \draw[line width=1pt] (u6) -- (v0);

      \foreach \x in {0,...,6} {%
        \draw[color=black!20,fill=red,line width=2pt] (u\x) circle (5pt);
        \draw[color=black!20,fill=red,line width=2pt] (v\x) circle (5pt);
      }%
      \draw[color=black!20,fill=red,line width=2pt] (u0) circle (5pt) node[black,above,yshift=4pt] {$x_1^u$};
      \draw[color=black!20,fill=red,line width=2pt] (u1) circle (5pt) node[black,above,yshift=4pt] {$p_1^u$};
      \draw[color=white,fill=red,line width=2pt] (u2) circle (5pt) node[black,above,yshift=4pt] {$y_u$};
      \draw[color=black!20,fill=red,line width=2pt] (u3) circle (5pt) node[black,above,yshift=4pt] {$x_2^u$};
      \draw[color=black!20,fill=red,line width=2pt] (u4) circle (5pt) node[black,above,yshift=4pt] {$p_2^u$};
      \draw[color=black!20,fill=red,line width=2pt] (u5) circle (5pt) node[black,above,yshift=4pt] {$x_3^u$};
      \draw[color=black!20,fill=red,line width=2pt] (u6) circle (5pt) node[black,above,yshift=4pt] {$p_3^u$};

      \draw[color=black!20,fill=red,line width=2pt] (v0) circle (5pt) node[black,below,yshift=-4pt] {$y_l\phantom{^l}$};
      \draw[color=black!20,fill=red,line width=2pt] (v1) circle (5pt) node[black,below,yshift=-4pt] {$p_1^l$};
      \draw[color=white,fill=red,line width=2pt] (v2) circle (5pt) node[black,below,yshift=-4pt] {$x_1^l=z_l$};
      \draw[color=black!20,fill=red,line width=2pt] (v3) circle (5pt) node[black,below,yshift=-4pt] {$p_2^l$};
      \draw[color=black!20,fill=red,line width=2pt] (v4) circle (5pt) node[black,below,yshift=-4pt] {$x_2^l$};
      \draw[color=black!20,fill=red,line width=2pt] (v5) circle (5pt) node[black,below,yshift=-4pt] {$p_3^l$};
      \draw[color=black!20,fill=red,line width=2pt] (v6) circle (5pt) node[black,below,yshift=-4pt] {$x_3^l$};

      \node at (.75,3.4) {$N_1$};
      \node at (5.25,3.4) {$N_2$};
      \node at (8.25,3.4) {$N_3$};

      \node at (u-2) {};
      \node at (u8) {};
    \end{tikzpicture}
  }

  \subfigure[The graph of case (2) in the proof of \autoref{thm:ClassWedges}\label{fig:sub:case2}]{
    \begin{tikzpicture}[scale=.75]
      \foreach \x in {0,...,8} {%
        \coordinate (u\x) at (1.5*\x,1.8);%
        \coordinate (v\x) at (1.5*\x,0);%
      }%

      \fill[color=black!20] (-.3,2.1) -- (-.3,-.3) -- (1.8,-.3) -- (1.8,2.1) -- cycle;

      \fill[color=black!20] (2.7,2.1) -- (2.7,-.3) -- (4.8,-.3) -- (4.8,2.1) -- cycle;

      \fill[color=black!20] (7.2,2.1) -- (7.2,-.3) -- (9.3,-.3) -- (9.3,2.1) -- cycle;

      \fill[color=black!20] (10.2,2.1) -- (10.2,-.3) -- (12.3,-.3) -- (12.3,2.1) -- cycle;

      \foreach \x in {0,...,8} {%
        \draw[line width=1pt] (u\x) -- (v\x);
      }

      \draw[line width=1pt] (u0) -- (v1);
      \draw[line width=1pt] (u1) -- (v0);

      \draw[line width=1pt] (u2) -- (v3);
      \draw[line width=1pt] (u3) -- (v2);

      \draw[line width=1pt] (u5) -- (v6);
      \draw[line width=1pt] (u6) -- (v5);

      \draw[line width=1pt] (u7) -- (v8);
      \draw[line width=1pt] (u8) -- (v7);

      \draw[line width=1pt] (u1) -- (v2);
      \draw[line width=1pt] (u3) -- (v4);
      \draw[line width=1pt] (u4) -- (v5);
      \draw[line width=1pt] (u6) -- (v7);
      \draw[line width=1pt] (u8) -- (v4);
      \draw[line width=1pt] (u4) -- (v0);

      \foreach \x in {0,1,2,3,5,6,7,8} {%
        \draw[color=black!20,fill=red,line width=2pt] (u\x) circle (5pt);
        \draw[color=black!20,fill=red,line width=2pt] (v\x) circle (5pt);
      }%
      \draw[color=black!20,fill=red,line width=2pt] (u0) circle (5pt) node[black,above,yshift=4pt] {$x_1^u$};
      \draw[color=black!20,fill=red,line width=2pt] (u1) circle (5pt) node[black,above,yshift=4pt] {$p_1^u$};
      \draw[color=black!20,fill=red,line width=2pt] (u2) circle (5pt) node[black,above,yshift=4pt] {$x_2^u$};
      \draw[color=black!20,fill=red,line width=2pt] (u3) circle (5pt) node[black,above,yshift=4pt] {$p_2^u$};
      \draw[color=white,fill=red,line width=2pt] (u4) circle (5pt) node[black,above,yshift=4pt] {$y_u$};
      \draw[color=black!20,fill=red,line width=2pt] (u5) circle (5pt) node[black,above,yshift=4pt] {$x_3^u$};
      \draw[color=black!20,fill=red,line width=2pt] (u6) circle (5pt) node[black,above,yshift=4pt] {$p_3^u$};
      \draw[color=black!20,fill=red,line width=2pt] (u7) circle (5pt) node[black,above,yshift=4pt] {$x_4^u$};
      \draw[color=black!20,fill=red,line width=2pt] (u8) circle (5pt) node[black,above,yshift=4pt] {$p_4^u$};

      \draw[color=black!20,fill=red,line width=2pt] (v0) circle (5pt) node[black,below,yshift=-4pt] {$p_1^l$};
      \draw[color=black!20,fill=red,line width=2pt] (v1) circle (5pt) node[black,below,yshift=-4pt] {$x_1^l$};
      \draw[color=black!20,fill=red,line width=2pt] (v2) circle (5pt) node[black,below,yshift=-4pt] {$p_2^l$};
      \draw[color=black!20,fill=red,line width=2pt] (v3) circle (5pt) node[black,below,yshift=-4pt] {$x_2^l$};
      \draw[color=white,fill=red,line width=2pt] (v4) circle (5pt) node[black,below,yshift=-4pt] {$z_l=y_l\phantom{^l}$};
      \draw[color=black!20,fill=red,line width=2pt] (v5) circle (5pt) node[black,below,yshift=-4pt] {$p_3^l$};
      \draw[color=black!20,fill=red,line width=2pt] (v6) circle (5pt) node[black,below,yshift=-4pt] {$x_3^l$};
      \draw[color=black!20,fill=red,line width=2pt] (v7) circle (5pt) node[black,below,yshift=-4pt] {$p_4^l$};
      \draw[color=black!20,fill=red,line width=2pt] (v8) circle (5pt) node[black,below,yshift=-4pt] {$x_4^l$};

      \node at (.75,3.4) {$N_1$};
      \node at (3.75,3.4) {$N_2$};
      \node at (8.25,3.4) {$N_3$};
      \node at (11.25,3.4) {$N_4$};
    \end{tikzpicture}
  } 
  \caption{Decomposing graphs in the proof of \autoref{thm:ClassWedges}}
  \label{fig:decomp}
\end{figure}
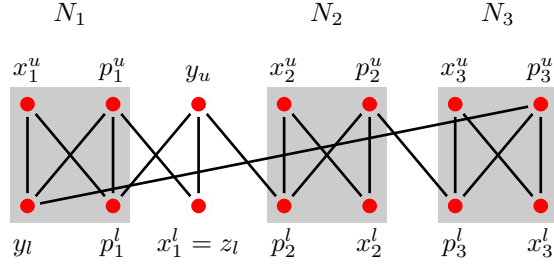
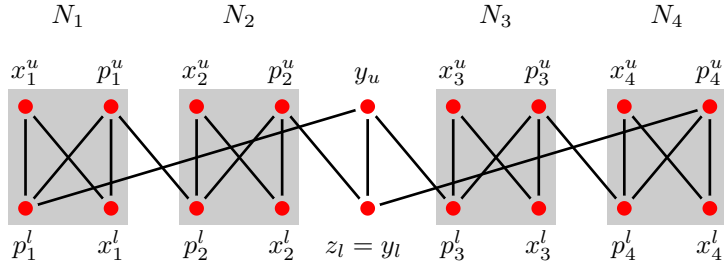
  \item $y_u$ and $z_l$ are not connected.  Assume that the degree of $z_l$ is $2$. Then $z_l\ne
    y_l$, and it needs a partner in the lower layer. The two edges incident to $z_l$ cannot both end
    in the same $N_i$, as one node in the upper layer of each $N_i$ has degree $2$.  Hence, the two
    incident edges end in different $N_1$, say at nodes $s_1\in N_1$ and $s_2\in N_2$. Yet, $z_l$
    needs a partner, so there is either an edge from $s_1$ to a node of $N_2$ or from $s_2$ or a
    node of $N_1$. Hence, either $s_1$ or $s_2$ have degree $4$. By construction, such a node does
    not exist, so we can assume that $z_l$ has degree $3$.  Again, the edges incident to $z_l$
    necessarily end in different $N_i$, as one node in the upper layer of each $N_i$ has degree
    $2$. Hence, $z_l$ cannot be a partner, so $z_l=y_l$. 

    The graph $N:=\bigcup N_i\cup\{y_u, y_l\}$ has $4d-8$ edges. $G$ has $d+1$ additional edges, and
    as $G$ is a connected face graph each $N_1$ is incident to at least two of them (as, in
    particular, each edge must be contained in a perfect matching). But as $y_u, y_l$ have degree
    $3$ we conclude that each $N_i$ is incident to exactly two of the $d+1$ additional edges. Hence,
    as before, the only way to create a connected face graph by adding only $d+1$ edges is to split
    the set of the $N_i$ into three nontrivial parts and connect them to $y_u, y_l$
    circularly. \qedhere
  \end{enumerate}
\end{proof}

\section{Low-dimensional Classification}
\label{sec:append-low-dimens}
\stepcounter{section}
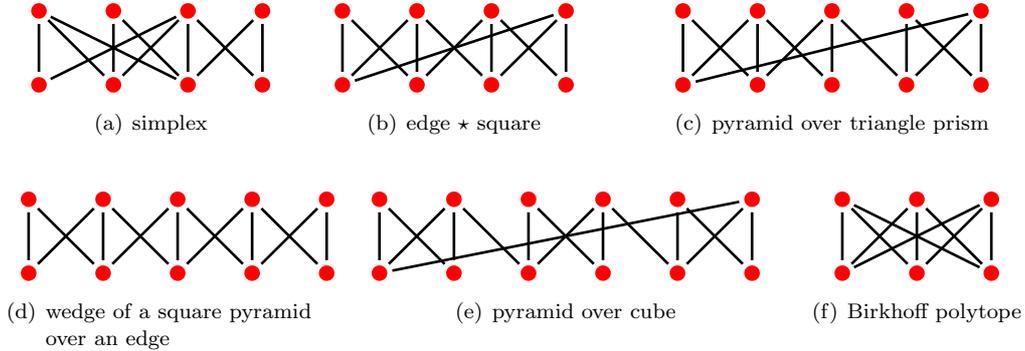
\begin{figure}[tb]
\centering
  \subfigure[simplex]{
    \begin{tikzpicture}[scale=.98]
      \foreach \x in {0,1,2,3} {
        \coordinate (u\x) at (\x,1);
        \coordinate (v\x) at (\x,0);
      }
           
      \draw [line width=1pt] (u0) -- (v0);
      \draw [line width=1pt] (u0) -- (v1);
      \draw [line width=1pt] (u0) -- (v2);

      \draw [line width=1pt] (u1) -- (v1);
      \draw [line width=1pt] (u1) -- (v2);

      \draw [line width=1pt] (u2) -- (v0);
      \draw [line width=1pt] (u2) -- (v1);
      \draw [line width=1pt] (u2) -- (v2);
      \draw [line width=1pt] (u2) -- (v3);

      \draw [line width=1pt] (u3) -- (v2);
      \draw [line width=1pt] (u3) -- (v3);

      \foreach \x in {0,1,2,3} {
        \draw [color=white,fill=red,line width=2pt] (u\x) circle (4pt);
        \draw [color=white,fill=red,line width=2pt] (v\x) circle (4pt);
      }
    \end{tikzpicture}
  }
  \quad
  \subfigure[edge $\star$ square]{
    \begin{tikzpicture}[scale=.98]
      \foreach \x in {0,1,2,3} {
        \coordinate (u\x) at (\x,1);
        \coordinate (v\x) at (\x,0);
      }
           
      \draw [line width=1pt] (u0) -- (v0);
      \draw [line width=1pt] (u0) -- (v1);

      \draw [line width=1pt] (u1) -- (v0);
      \draw [line width=1pt] (u1) -- (v1);
      \draw [line width=1pt] (u1) -- (v2);

      \draw [line width=1pt] (u2) -- (v1);
      \draw [line width=1pt] (u2) -- (v2);
      \draw [line width=1pt] (u2) -- (v3);

      \draw [line width=1pt] (u3) -- (v0);
      \draw [line width=1pt] (u3) -- (v2);
      \draw [line width=1pt] (u3) -- (v3);

      \foreach \x in {0,1,2,3} {
        \draw [color=white,fill=red,line width=2pt] (u\x) circle (4pt);
        \draw [color=white,fill=red,line width=2pt] (v\x) circle (4pt);
      }
    \end{tikzpicture}
  }
\quad
  \subfigure[pyramid over triangle prism]{
    \begin{tikzpicture}[scale=.98]
      \foreach \x in {0,1,2,3,4} {
        \coordinate (u\x) at (\x,1);
        \coordinate (v\x) at (\x,0);
      }
           
      \draw [line width=1pt] (u0) -- (v0);
      \draw [line width=1pt] (u0) -- (v1);

      \draw [line width=1pt] (u1) -- (v0);
      \draw [line width=1pt] (u1) -- (v1);
      \draw [line width=1pt] (u1) -- (v2);

      \draw [line width=1pt] (u2) -- (v1);
      \draw [line width=1pt] (u2) -- (v2);
      \draw [line width=1pt] (u2) -- (v3);

      \draw [line width=1pt] (u3) -- (v3);
      \draw [line width=1pt] (u3) -- (v4);

      \draw [line width=1pt] (u4) -- (v0);
      \draw [line width=1pt] (u4) -- (v3);
      \draw [line width=1pt] (u4) -- (v4);

      \foreach \x in {0,1,2,3,4} {
        \draw [color=white,fill=red,line width=2pt] (u\x) circle (4pt);
        \draw [color=white,fill=red,line width=2pt] (v\x) circle (4pt);
      }
      \draw [color=white,fill=white,line width=2pt] (-.5,0) circle
      (4pt);
      \draw [color=white,fill=white,line width=2pt] (4.5,0) circle (4pt);
    \end{tikzpicture}
  }

  \bigskip

  \subfigure[wedge of a square pyramid over an edge]{
    \begin{tikzpicture}[scale=.98]
      \foreach \x in {0,1,2,3,4} {
        \coordinate (u\x) at (\x,1);
        \coordinate (v\x) at (\x,0);
      }
           
      \draw [line width=1pt] (u0) -- (v0);
      \draw [line width=1pt] (u0) -- (v1);

      \draw [line width=1pt] (u1) -- (v0);
      \draw [line width=1pt] (u1) -- (v1);
      \draw [line width=1pt] (u1) -- (v2);

      \draw [line width=1pt] (u2) -- (v1);
      \draw [line width=1pt] (u2) -- (v2);
      \draw [line width=1pt] (u2) -- (v3);

      \draw [line width=1pt] (u3) -- (v2);
      \draw [line width=1pt] (u3) -- (v3);
      \draw [line width=1pt] (u3) -- (v4);

      \draw [line width=1pt] (u4) -- (v3);
      \draw [line width=1pt] (u4) -- (v4);

      \foreach \x in {0,1,2,3,4} {
        \draw [color=white,fill=red,line width=2pt] (u\x) circle (4pt);
        \draw [color=white,fill=red,line width=2pt] (v\x) circle (4pt);
      }
    \end{tikzpicture}
  }
  \subfigure[pyramid over cube]{
    \begin{tikzpicture}[scale=.98]
      \foreach \x in {0,1,2,3,4,5} {
        \coordinate (u\x) at (\x,1);
        \coordinate (v\x) at (\x,0);
      }
           
      \draw [line width=1pt] (u0) -- (v0);
      \draw [line width=1pt] (u0) -- (v1);

      \draw [line width=1pt] (u1) -- (v0);
      \draw [line width=1pt] (u1) -- (v1);
      \draw [line width=1pt] (u1) -- (v2);

      \draw [line width=1pt] (u2) -- (v2);
      \draw [line width=1pt] (u2) -- (v3);

      \draw [line width=1pt] (u3) -- (v2);
      \draw [line width=1pt] (u3) -- (v3);
      \draw [line width=1pt] (u3) -- (v4);

      \draw [line width=1pt] (u4) -- (v4);
      \draw [line width=1pt] (u4) -- (v5);

      \draw [line width=1pt] (u5) -- (v0);
      \draw [line width=1pt] (u5) -- (v4);
      \draw [line width=1pt] (u5) -- (v5);

      \foreach \x in {0,1,2,3,4,5} {
        \draw [color=white,fill=red,line width=2pt] (u\x) circle (4pt);
        \draw [color=white,fill=red,line width=2pt] (v\x) circle (4pt);
      }
    \end{tikzpicture}
  }
  \subfigure[Birkhoff polytope]{
    \begin{tikzpicture}[scale=.98]
      \foreach \x in {0,1,2} {
        \coordinate (u\x) at (\x,1);
        \coordinate (v\x) at (\x,0);
      }
           
      \draw [line width=1pt] (u0) -- (v0);
      \draw [line width=1pt] (u0) -- (v1);
      \draw [line width=1pt] (u0) -- (v2);

      \draw [line width=1pt] (u1) -- (v0);
      \draw [line width=1pt] (u1) -- (v1);
      \draw [line width=1pt] (u1) -- (v2);

      \draw [line width=1pt] (u2) -- (v0);
      \draw [line width=1pt] (u2) -- (v1);
      \draw [line width=1pt] (u2) -- (v2);

      \foreach \x in {0,1,2} {
        \draw [color=white,fill=red,line width=2pt] (u\x) circle (4pt);
        \draw [color=white,fill=red,line width=2pt] (v\x) circle (4pt);
      }

      \draw [color=white,fill=white,line width=2pt] (-.5,0) circle
      (4pt);
      \draw [color=white,fill=white,line width=2pt] (2.5,0) circle (4pt);
    \end{tikzpicture}
  }
\caption{The $4$-dimensional Birkhoff faces which are not products}
\label{fig:Graphs4DNonProd}
\end{figure}
\addtocounter{section}{-1}

For a classification of faces of a given dimension $d$ it is essentially sufficient to classify
those faces that have a connected face graph. The others are products of lower dimensional faces of
the Birkhoff polytope, and they can thus be obtained as pairs of face graphs of a lower dimension.

The three dimensional faces have been classified before by several others, see, \emph{e.g.},
\cite{BG77-1} or \cite{BS96}.  By \autoref{thm:billera} we know that any $d$-dimensional face
appearing in some Birkhoff polytope does so in a Birkhoff polytope $\birkhoff$ for $n\le 2d$.

We have implemented an algorithm that generates all \emph{irreducible connected} face graphs of a
given dimension and with a given number of nodes. The implementation is done as an
extension~\cite{polymake_birkhoff} to \texttt{polymake}~\cite{GJ00}. The algorithm provides a method
\texttt{generate\_face\_graphs} that takes two arguments, the number of nodes of the graph in one of
the layers, and the dimension of the face. It constructs all irreducible face graphs with this
number of nodes and the given dimension, up to combinatorial isomorphism of the corresponding face
(as some combinatorial types have irreducible face graphs with different number of nodes they can
appear in several times different runs of the method). Dimension and number of nodes fixes the
number of edges, and, roughly, the method recursively adds edges to an empty graph until it reaches
the appropriate number.

It distinguishes between graphs of minimal degree $3$ and those with at least one node of degree
$2$. Constructing those with minimal degree $3$ is simple, as filling each node with at least
three edges does not leave much choice for a face graph. This can be done by a simple recursion
using some of the results in~\autoref{sec:irred} and~\autoref{sec:combtypes}.

For the other graphs we iterate over the number of nodes of degree $2$, and first equip each such
node with a partner and the necessary edges, and add further edges until all remaining nodes have
degree $3$. Here we use the results of~\autoref{sec:irred} and~\autoref{sec:combtypes} to prune
the search tree at an early stage if graphs in this branch either will not be irreducible or not a
face graph. The few remaining edges are then again filled in recursively. See also the comments in
the code.

\begin{table}[t]
  \centering
  \begin{tabular}{r|rrrrrrrr}
    dim & 1& 2& 3& 4& 5& 6& 7& 8\\
    \midrule
    \# non-product types &1 & 1& 2& 6& 20& 86& 498& 3712\\
    \# product types &0 & 1& 3& 5& 13& 43& 163& 818\\
    \midrule
    \# pyramids& 1 & 2 & 2& 4 & 10 & 28 & 98 & 416
  \end{tabular}
  
  \bigskip

  \caption{Low dimensional faces of Birkhoff polytopes. The last row of the table collects the number of pyramids among the non-product types.}
  \label{tab:lowdim}
  \vspace{-.4cm}
\end{table}
The data in \texttt{polymake} format can be obtained from~\cite{BirkhoffFaces}.  The following
theorem summarizes the results. For the product types we have just counted the non-isomorphic
products of connected irreducible graphs.
\begin{theorem}
  \begin{enumerate}
  \item In dimension $2$ there are two combinatorial types of faces, a square and a triangle.
  \item In dimension $3$ there are two combinatorial types that are products of lower dimensional
    faces, and two other types, the $3$-simplex and the pyramid over a square.
  \item In dimension $4$ there are five combinatorial types that are products, and six other types:
    \begin{enumerate*}
    \item a simplex,
    \item the join of a segment and a square,
    \item a wedge $W_1$ over an edge of the base of a square pyramid,
    \item the Birkhoff polytope $\birkhoff[3]$, and
    \item the pyramids over a cube and a triangle prism.
    \end{enumerate*}
 See \autoref{fig:Graphs4DNonProd} for examples of face graphs for the
    non-product types.
  \item In dimension $5$ there are 13 combinatorial types that are product, and 20 other types:
    \begin{enumerate*}
    \item the pyramids over all $4$-dimensional types except $\birkhoff[3]$,
    \item the join of two squares, the wedges over a facet and an edge of $\birkhoff[3]$,
    \item the wedge over the complement of the square pyramid in $W_1$,
    \item the wedge over a $3$-simplex in $W_1$,
    \item the wedge over the complement of a $3$-simplex in $W_1$ and its dual,
    \item the wedge over the complement of a triangle prism in $W_1$,
    \item the wedge over a triangle of the prism over a triangle in the pyramid over a triangle,
    \item the wedge over the edge of a square in the double pyramid over the square, and
    \item the join of two squares.
    \end{enumerate*}
  \item In dimension $6$ there are 43 combinatorial types that are product, and 86 other types.
  \item In dimension $7$ there are 163 combinatorial types that are product, and 498 other types.
  \item In dimension $8$ there are 818 combinatorial types that are
    product, and 3712 other types.
  \end{enumerate}
\end{theorem}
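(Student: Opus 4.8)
The plan is to prove this theorem by a verified computer enumeration, carrying out for each $2\le d\le 8$ an exhaustive generation of combinatorial types together with a sound deduplication, so the substance of the proof is the completeness of the generation and the correctness of the type-identification rather than any single inequality.

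First I would reduce everything to \emph{connected irreducible} face graphs. By \autoref{thm:billera} every $d$-dimensional type occurs in some $\birkhoff$ with $n\le 2d$, and by \autoref{cor:irredNodeBound} a connected irreducible $d$-dimensional face graph has at most $2d-2$ nodes in each layer. Since reduction preserves the combinatorial type (\autoref{prop:isomgraphs}) and disconnected graphs correspond to products, it suffices to enumerate, for each $n$ in the relevant range up to $2d-2$, all connected irreducible face graphs with $n$ nodes per layer; by \eqref{eq:dimension} each such graph has exactly $m=d+2n-1$ edges, so dimension and node number pin down the edge count. I would then argue that the recursion in \texttt{generate\_face\_graphs} is complete: it fills edges up to the fixed count $m$, and every rule used to prune the search tree is a \emph{necessary} condition for a partial graph to extend to a connected irreducible face graph of the target dimension, drawn from the structural results of \autoref{sec:irred} — the degree bound of \autoref{prop:degree}, the partner-count bound of \autoref{cor:partnerbound}, the existence of a partner for each minimal node (\autoref{lemma:partnerlemma}), and the minimum edge count of \autoref{cor:MinNumberOfEdges}. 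Splitting the search into the minimal-degree-$3$ case and the cases indexed by the number of degree-$2$ nodes lets completeness be checked branch by branch: one verifies no rule ever discards a genuine face graph of the right dimension.

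The main obstacle, and the step demanding the most care, is deduplication \emph{across different node numbers}. A single combinatorial type can have irreducible representatives on different numbers of nodes — the $4$-simplex already does (\autoref{fig:NumberNodesnotUnique}) — so counting per $n$ and summing would over-count. I would therefore compute, from each generated graph $G$, the full face lattice $\lattice(\face(G))$, reading its faces off as the face subgraphs of $G$ (equivalently, as the lattice generated by the perfect matchings), and reduce the global list modulo poset isomorphism of these lattices. The nontrivial verification is that this isomorphism test is correct and that every pair of graphs yielding the same type is merged, even when they arise in different runs for different $n$; this is precisely where the computation is delicate and where the \texttt{polymake} implementation of \cite{polymake_birkhoff} must be audited.

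Finally the product types: by \cite[Cor.~4.7]{BG77-2} a face is a product exactly when its graph is disconnected, and the decomposition into connected components is unique up to isomorphism of the components. Hence the product types of dimension $d$ are in bijection with multisets of connected types whose dimensions sum to $d$, and I would obtain them by combining the connected types already classified in lower dimensions. Counting the resulting non-redundant lists — and separately flagging the pyramids among the non-product types for the last row — gives the stated numbers, while the explicit descriptions for $d\le 5$ are read directly off the small generated graphs, such as those in \autoref{fig:Graphs4DNonProd}. The full data underlying the counts is recorded in \cite{BirkhoffFaces}.
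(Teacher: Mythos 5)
Your proposal is correct and takes essentially the same route as the paper: the paper likewise reduces to connected irreducible face graphs (via \autoref{thm:billera}, \autoref{prop:isomgraphs}, and the product/disconnectedness correspondence), enumerates them per node number and dimension with \texttt{generate\_face\_graphs} using the structural results of \autoref{sec:irred} and \autoref{sec:combtypes} as pruning rules, merges representatives of the same type arising for different node numbers by combinatorial isomorphism of the corresponding faces, and counts product types as non-isomorphic combinations of connected lower-dimensional types. The only difference is one of emphasis: you spell out the completeness and cross-$n$ deduplication obligations that the paper delegates to the \texttt{polymake} implementation \cite{polymake_birkhoff} and the published data \cite{BirkhoffFaces}.
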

The descriptions given in the theorem are not unique. \autoref{tab:lowdim} summarizes this theorem.

\small

\providecommand{\bysame}{\leavevmode\hbox to3em{\hrulefill}\thinspace}
\providecommand{\MR}{\relax\ifhmode\unskip\space\fi MR }
\newcommand{\doiref}[1]{\textsc{doi:\,}\href{http://dx.doi.org/#1}{\nolinkurl{#1}}}
\newcommand{\urlref}[1]{\textsc{url:\,}\href{http://#1}{\nolinkurl{#1}}}
\newcommand{\arxivref}[2]{\textsc{arxiv:\,}\href{http://arxiv.org/#1}{\nolinkurl{#1 [#2]}}}
\renewcommand{\MR}[1]{}

\end{document}